\title{\textbf{Optimal stopping problems with regime switching: a viscosity solution method}\footnotemark{}}
\author{Yong-Chao Zhang\footnotemark{}\; and Na Zhang\footnotemark{}}
\date{}
 \newtheorem{thm}{Theorem}[section]
 \newtheorem{cor}[thm]{Corollary}
 \newtheorem{lem}[thm]{Lemma}
 \theoremstyle{remark}
 \newtheorem{rem}[thm]{Remark}
 \newtheorem{exm}[thm]{Example}
 \newtheorem{defn}[thm]{Definition}
\begin{document}
\footnotetext[1]{This work is partially supported by the Fundamental Research Funds for the Central Universities grant N142303010.}
\footnotetext[2]{School of Mathematics and Statistics, Northeastern University at Qinhuangdao, Taishan Road
143, Qinhuangdao 066004, China. E-mail: ldfwq@163.com.}
\footnotetext[3]{\Letter\; School of Mathematical Sciences, Nankai University, Weijin Road 94, Tianjin 300071, China. E-mail: nazhang0804@163.com.  Tel: +86 186 3356 0435.}
\maketitle
\vspace{-0.5cm}
\begin{abstract}
We employ the viscosity solution technique to analyze optimal stopping problems with regime switching. Specifically, we obtain the viscosity property of value functions, the uniqueness of viscosity solutions, the regularity of value functions and the form of optimal stopping intervals. Finally, we provide an application of the results.\\
\noindent\textit{Mathematics Subject Classification (2010)}: 60G40, 62L15, 60H30.\\
\noindent\textit{Key Words}: dynamic programming; optimal stopping; regime switching; viscosity solution.
\end{abstract}

\section{Introduction}\label{intro}
Regime-switching processes are appropriate candidates for describing the price of financial assets \cite{CherGallGhysTauc2003, BaKiMu2014} and the price of some commodities \cite{CasaDufrRout2005}. In addition, as Elias \textit{et al.}~\cite{ElWaFa2014} point out, regime-switching processes are also plausible choices of modelling the stochastic behavior of temperature.  Last but not the least, regime-switching processes appear in real option pricing \cite{Bollen1999}. Thus it is reasonable to consider the optimal stopping problems in which underlying processes and payoff functions are modulated by Markov chains.

Many specific problems of optimal stopping with regime switching have been studied. Assuming that the stock price follows a geometric
Brownian motion modulated by a two-state Markov chain, Guo \cite{Guo2001} provides an explicit closed solution for Russian options. Under the same
assumption as that of \cite{Guo2001}, Guo and Zhang derive an explicit closed solution for perpetual American options in \cite{GuoZhang2004} and
for optimal selling rules in \cite{GuoZhang2005}, respectively, and Buffington and Elliot \cite{BuffElli2002a} explore American options with finite maturity date. Eloe \textit{et al.} \cite{EloeLiuYinZhang2008} develop optimal selling rules via using a regime-switching exponential Gaussian diffusion model. In a regime-switching L\'evy model, Boyarchenko and Levendorski\v{i} \cite{BoyaLeve2008} show a pricing procedure for perpetual American and real options which is efficient even though the number of states is large provided transition rates are not large with respect to riskless rates. D'Auriaa and Kellab study in \cite{d2012markov} Markov modulation of a two-sided reflected Brownian motion and give an application to fluid queues.

The method used in \cite[e.g.,][]{Guo2001, GuoZhang2004, GuoZhang2005} is to construct a solution to some equations by guessing a priori a strategy and then validate it by a verification argument. In this paper, we will employ the viscosity solution technique to determine the solution of optimal stopping problems with regime switching. First, we prove the value function is a viscosity solution of some variational inequalities. Second, we prove the uniqueness of viscosity solutions. Third, we show the regularity of the value function. Finally, we determine the form of optimal stopping intervals.

We outline the structure of this paper. In Section \ref{optstop}, we prove the viscosity property of the value function of optimal stopping problems with regime switching (Theorem \ref{t:VisSol}), the uniqueness of viscosity solutions (Theorem \ref{t:Uniq1}), the regularity of the value function (Theorem \ref{t:Regu}) and the form of optimal stopping intervals (Theorem \ref{t:OptiStraForm}). In Section \ref{model}, we provide an application of the results obtained in Section \ref{optstop}. Some conclusions are drawn in Section \ref{Conclu}.

\section{Optimal Stopping Problems with Regime Switching}\label{optstop}
Let $(\Omega,$ $\mathscr{F},\{\mathscr{F}_t\}_{t\geq 0},\mathbb{P})$ be a filtered probability space with the filtration $\{\mathscr{F}_t\}_{t\geq 0}$ satisfying the usual conditions and $\mathscr{F}_0$ being the completion of $\{\emptyset, \Omega\}$.

Let $X:=(X(t), t\geq 0)$ be a time homogeneous Markov chain defined on $(\Omega, \mathscr{F}, \{\mathscr{F}_t\}_{t\geq
0}, \mathbb{P})$ taking values in the standard orthogonal basis of $\mathbb{R}^m$,  $I:=\{e_1, e_2, \dots, e_m\}$, whose rate matrix is
$A:=(a_{ij})_{m\times m}$ with $a_{ii}<0$, for each $i=1,2,\cdots, m$. Then as in \cite{BuffElli2002}, we can show that
\begin{equation*}
X(t)=X_0+\int_0^t A X(s)\mathrm{d}s+M(t),
\end{equation*}
where $M:=(M(t), t\geq 0)$ is a martingale with respect to the filtration generate by $X$.

Let $B:=(B(t), t\geq 0)$ be a one dimensional standard Brownian motion, which is independent of $X$, defined on $(\Omega, \mathscr{F}, \{\mathscr{F}_t\}_{t\geq
0}, \mathbb{P})$.

Assume that the process $Y:=(Y(t), t\geq 0)$ satisfies
\[
\mathrm{d}Y(t)=\alpha(X(t),Y(t)) \mathrm{d}t+\beta(X(t),Y(t)) \mathrm{d}B(t), \;\; X(0)=e_i, \;\;Y(0)=y,
\]
where $\alpha: I\times\mathbb{R}\rightarrow\mathbb{R}$ and $\beta: I\times\mathbb{R}\rightarrow\mathbb{R}$ are two functions such that $\alpha(e_i,\cdot)$ and
$\beta(e_i,\cdot)$ are Lipschitz continuous for each $i=1,2,\cdots, m$. We assume that $\beta(\cdot,\cdot)>0$.

Let $\mathcal{T}$ denote the set of all stopping times. For any $\tau\in\mathcal{T}$, $e_i\in I$ and $y\in \mathbb{R}$, we define
\[
\begin{aligned}
J^{\tau}(e_i, y):=\mathbb{E}&\left[\left.\int_{0}^{\tau}\exp(-rt)f(X(t),Y(t))\mathrm{d}t\right.\right.\\
&\qquad\left.\left.+\exp(-r\tau)g(X(\tau),Y(\tau))\right|X(0)=e_i,Y(0)=y\right],
\end{aligned}
\]
where $r$ is a real number, and $f: I\times\mathbb{R}\rightarrow\mathbb{R}$ and $g: I\times\mathbb{R}\rightarrow\mathbb{R}$ are two functions such that
$f(e_i,\cdot)$ and $g(e_i,\cdot)$ are Lipschitz continuous for each $i=1,2,\cdots, m$. We assume that $\exp(-r\tau)g(X(\tau),Y(\tau))=0$ on
$\{\tau=\infty\}$.

Then the optimal stopping problem with regime switching is described as follows.
\begin{equation}\label{e:genopt}
\text{Find $V(e_i,y)$ and $\tau^*\in\mathcal{T}$ such that
$V(e_i,y)=\sup\limits_{\tau\in\mathcal{T}}J^{\tau}(e_i,y)=J^{\tau^*}(e_i,y)$.}
\end{equation}

For each $i=1, 2, \cdots, m$, setting $\alpha_i(\cdot):=\alpha(e_i,\cdot)$, $\beta_i(\cdot):=\beta(e_i,\cdot)$, $f_i(\cdot):=f(e_i,\cdot)$,
$g_i(\cdot):=g(e_i,\cdot)$ and $V_i(\cdot):=V(e_i,\cdot)$, we have the following theorem.

\begin{thm}\label{t:VisSol}
Assume that $r$ is large enough. Then for each fixed $i\in\{1,2,\cdots,m\}$, $V_i$ is the unique viscosity solution with at most linear growth of the following variational inequality,
\begin{equation}\label{e:VariIneq}
\min\{rV_i-\mathcal{L}_iV_i-\sum\limits_{q=1}^{m}a_{qi}V_q-f_i, V_i-g_i\}=0\;\,\mathrm{on}\;\,\mathbb{R},
\end{equation}
where $\mathcal{L}$ is defined by
\begin{equation*}
\mathcal{L}_i\xi(y):=\frac{1}{2}\beta_i(y)^2\frac{\partial^2\xi(y)}{\partial y^2}
+\alpha_i(y)\frac{\partial\xi(y)}{\partial y},
\end{equation*}
for any $\xi\in C^2(\mathbb{R})$.
\end{thm}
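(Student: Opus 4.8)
The statement bundles two assertions: that $V_i=\sup_{\tau\in\mathcal{T}}J^{\tau}(e_i,\cdot)$ is \emph{a} viscosity solution of \eqref{e:VariIneq} with at most linear growth, and that it is the \emph{only} such one. The second will follow from the comparison principle (Theorem~\ref{t:Uniq1}), so the plan is to prove the first, along the classical route for optimal stopping problems: a priori bounds on $V$, then the dynamic programming principle, then the viscosity inequalities. For the a priori bounds, the Lipschitz continuity of $\alpha_i$ and $\beta_i$ yields the standard estimates $\mathbb{E}\big[\sup_{s\le t}|Y(s)|\big]\le C(1+|y|)e^{Ct}$ and $\mathbb{E}|Y^{y}(t)-Y^{y'}(t)|\le|y-y'|e^{Ct}$; combined with the Lipschitz continuity of $f_i$ and $g_i$ and with $r$ chosen larger than the exponential rate $C$ appearing here, they make $J^{\tau}(e_i,\cdot)$ finite, of at most linear growth uniformly in $\tau$ and $i$, and Lipschitz with a constant independent of $\tau$; hence $V_i$ has at most linear growth and is continuous (indeed locally Lipschitz). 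This is one of the places where ``$r$ large enough'' is used.

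Next, the dynamic programming principle
\begin{equation*}
\begin{split}
V(e_i,y)=\sup_{\tau\in\mathcal{T}}\mathbb{E}_{e_i,y}\Big[&\int_0^{\tau\wedge\theta}e^{-rt}f(X(t),Y(t))\,\mathrm{d}t\\
&{}+\mathbbm{1}_{\{\tau<\theta\}}e^{-r\tau}g(X(\tau),Y(\tau))+\mathbbm{1}_{\{\tau\ge\theta\}}e^{-r\theta}V(X(\theta),Y(\theta))\Big],
\end{split}
\end{equation*}
valid for every stopping time $\theta$, is obtained in the usual way from the strong Markov property of $(X,Y)$ together with a measurable-selection argument, the continuity of $V$ making the selection routine. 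Fix $i$. Taking $\tau=0$ gives $V_i\ge g_i$, which settles one branch of each inequality. For the other branch of the supersolution inequality, let $\varphi\in C^2$ touch $V_i$ from below at $y_0$ and apply the dynamic programming principle with $\theta=\theta_h:=h\wedge\tau_B\wedge\rho$, $\tau_B$ the exit time of $Y$ from a small ball $B$ around $y_0$ and $\rho$ the first regime change; since $X\equiv e_i$ on $[0,\theta_h)$, It\^o's formula for $e^{-rt}\varphi(Y(t))$ on $[0,\theta_h]$ involves only $\varphi$, and combining it with $V\ge\varphi$ on $\{\theta_h<\rho\}$ and $V=V_{X(\rho)}$ on $\{\theta_h=\rho\}$, dividing by $h$ and letting $h\downarrow0$, gives $(\mathcal{L}_i\varphi-r\varphi+f_i)(y_0)$ from the diffusive part and $a_{ii}V_i(y_0)+\sum_{q\ne i}a_{qi}V_q(y_0)=\sum_{q}a_{qi}V_q(y_0)$ from the jump event---the latter because $\mathbb{P}(\rho\le h)=-a_{ii}h+o(h)$, the jump lands on $e_q$ with probability $a_{qi}/(-a_{ii})$, and $Y(\rho)\to y_0$---i.e.\ $r\varphi(y_0)-\mathcal{L}_i\varphi(y_0)-\sum_{q}a_{qi}V_q(y_0)-f_i(y_0)\ge0$. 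The subsolution inequality is symmetric: if $\varphi\ge V_i$ near $y_0$ and $V_i(y_0)>g_i(y_0)$, shrink $B$ so that in addition $V_i-g_i\ge c>0$ on $B$, feed an $\varepsilon$-optimal $\tau_\varepsilon$ into the dynamic programming principle with the same $\theta_h$, use $g_i(Y(\tau_\varepsilon))\le\varphi(Y(\tau_\varepsilon))-c$ on $\{\tau_\varepsilon<\theta_h\}$ and $V_i\le\varphi$ on the part of $\{\tau_\varepsilon\ge\theta_h\}$ where $\theta_h<\rho$, apply It\^o and let $h\downarrow0$ and then $\varepsilon\downarrow0$; the gain $c$ on the stopping event forces the near-optimal strategy not to stop inside $B$, which keeps the limit non-degenerate and yields $r\varphi(y_0)-\mathcal{L}_i\varphi(y_0)-\sum_{q}a_{qi}V_q(y_0)-f_i(y_0)\le0$.

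The real obstacle---the reason this is not a verbatim copy of the single-regime case---is the coupling term $\sum_{q}a_{qi}V_q$: its summands $V_q$ with $q\ne i$ are known only to be continuous, not $C^2$, so It\^o's formula cannot be applied to a joint test function on $I\times\mathbb{R}$. Stopping before the first regime change, so that only the smooth $\varphi$ enters the stochastic calculus while the cross terms surface solely through the first-order-in-$h$ jump probability, is the device that circumvents this. I expect the proof of the dynamic programming principle and the bookkeeping around the jump event to be the only delicate steps; the uniqueness clause is then the comparison statement of Theorem~\ref{t:Uniq1}, whose proof requires the same quantitative threshold on $r$.
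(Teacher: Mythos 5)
Your route is genuinely different from the paper's at the key point, namely how the coupling term $\sum_q a_{qi}V_q$ enters. The paper exploits the Lipschitz continuity of the $V_j$ (its Lemma \ref{l:LipsCont}) to replace each $V_j$, $j\neq i$, by a uniform $C^2$ approximation $\varphi_j^k$, applies It\^o's formula to $e^{-rt}\sum_j\varphi_j^k(Y(t))X_j(t)$ using the semimartingale decomposition of $X$, and lets $k\to\infty$, so the coupling appears inside the generator; the subsolution half is then run by contradiction, with the uniform lower bound of Lemma \ref{l:Claim} (valid for \emph{all} stopping times $\tau$) supplying the contradiction. You instead stop at the first regime change $\rho$ and recover the coupling from the first-order jump probability, with the $V_q$ entering only through the dynamic programming principle, so no smoothing is needed. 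Your a priori estimates correspond to Lemma \ref{l:LipsCont}, the uniqueness clause is indeed delegated to Theorem \ref{t:Uniq1}, and your supersolution argument is sound: there $\tau=\theta=\theta_h$ is fixed, the event $\{\rho=\theta_h\}$ differs from $\{\rho\le h\}$ only by $\{\tau_B<\rho\le h\}$, of probability $o(h)$, and the expansion you describe gives exactly \eqref{e:SupeSolu1}.

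The subsolution half, however, has a genuine gap at the step you label as mere bookkeeping. The coupling term is extracted from the event $\{\rho=\theta_h\le\tau_\varepsilon\}$, and to identify its contribution as $h\sum_q a_{qi}V_q(y_0)+o(h)$ you need $\mathbb{P}(\tau_\varepsilon<\rho\le h)=o(h)$. But the only control your own inequality provides is $c\,p\le\varepsilon+O(h)$, where $p:=\mathbb{E}\bigl[e^{-r\tau_\varepsilon}\mathbbm{1}_{\{\tau_\varepsilon<\theta_h\}}\bigr]$, because the drift and jump terms on the right-hand side are themselves $O(h)$; this yields only $p=O(h)$, which is of exactly the same order as the jump probability. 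Since the sign of $V_q(y_0)-V_i(y_0)$ is unknown, discarding an $O(h)$ portion of the jump event can only be compensated by passing to positive parts, and you end up with $\sum_{q\ne i}a_{qi}\bigl(V_q(y_0)-V_i(y_0)\bigr)^+$ in place of $\sum_q a_{qi}V_q(y_0)$, which is not the subsolution inequality; so ``the gain $c$ forces the near-optimal strategy not to stop inside $B$'' is precisely the unproved quantitative claim. It can be repaired, e.g.\ by the Snell-type estimate $\mathbb{E}\bigl[e^{-r\tau_\varepsilon}(V-g)(X(\tau_\varepsilon),Y(\tau_\varepsilon))\bigr]\le\varepsilon$ (apply the dynamic programming principle \eqref{e:DynProPri} with $\theta=\tau_\varepsilon$ and use $\varepsilon$-optimality), which gives $p\le\varepsilon/c$ and hence $o(h)$ after choosing $\varepsilon=\varepsilon(h)=h^2$; or by switching to the paper's organization, where the strict margins on a neighbourhood together with Lemma \ref{l:Claim} make this interaction between $\tau$ and the jump time irrelevant. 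As written, though, the limit you claim does not follow.
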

\begin{rem}
The assumption that {\em $r$ is large enough} is nothing but to guarantee the Lipschitz continuity of the functions $V_i$'s. Refer to Lemma \ref{l:LipsCont}.
\end{rem}
Before proving the above theorem, let us first recall the definition of viscosity solutions.

The theory of viscosity solutions applies to certain partial differential equations of the form $F(x, u, Du, D^2u)=0$ where $F:
\mathbb{R}^N\times\mathbb{R}\times\mathbb{R}^N\times\mathcal{S}(N)\rightarrow\mathbb{R}$ along with $\mathcal{S}(N)$ is the set of all symmetric $N\times N$
matrices.

We require $F$ to satisfy the monotonicity condition
\begin{equation}\label{e:con}
F(x, r, p, X)\leq F(x, s, p, Y),\,\,\mathrm{whenever}\,\, r\leq s\,\, \mathrm{and}\,\, Y\leq X.
\end{equation}
Here $r, s\in \mathbb{R}$, $x, p\in\mathbb{R}^N$, $X, Y\in\mathcal{S}(N)$ and $\mathcal{S}(N)$ is equipped with its usual order.

It will be convenient to have the following notations.
\begin{equation*}
\begin{split}
&\mathrm{USC}(\mathcal{O}):=\{\textrm{upper semicontinuous functions}\,\, u:\mathcal{O}\rightarrow\mathbb{R}\},\\
&\mathrm{LSC}(\mathcal{O}):=\{\textrm{lower semicontinuous functions}\,\, u:\mathcal{O}\rightarrow\mathbb{R}\},\\
\end{split}
\end{equation*}
\begin{equation*}
\begin{split}
&J_\mathcal{O}^{2, +}u(x):=\{(D\varphi(x), D^2\varphi(x)): \varphi \,\,\mathrm{is}\,\, C^2\,\, \mathrm{and}\,\, u-\varphi\,\,\textrm{has a maximum at}\,\,
x\},\\
&J_\mathcal{O}^{2, -}u(x):=\{(D\varphi(x), D^2\varphi(x)): \varphi \,\,\mathrm{is}\,\, C^2\,\, \mathrm{and}\,\, u-\varphi\,\,\textrm{has a minimum at}\,\,
x\},\\
\end{split}
\end{equation*}
\begin{equation*}
\begin{split}
&\overline{J}_\mathcal{O}^{2, +}u(x):=\{(p, X)\in\mathbb{R}^N\times\mathcal{S}(N): \textrm{there is a sequence}\,\,(x_n,p_n,X_n)_{n\in\mathbb{N}}\subset\\
&\qquad\qquad\qquad\mathcal{O}\times J_\mathcal{O}^{2, +}u(x_n)\,\,\textrm{such that}\,\,(x_n,u(x_n),p_n,X_n)\,\,\textrm{converges to}\\
&\qquad\qquad\qquad(x,u(x),p,X)\,\,\textrm{as}\,\,n\rightarrow\infty\},\\
&\overline{J}_\mathcal{O}^{2, -}u(x):=\{(p, X)\in\mathbb{R}^N\times\mathcal{S}(N): \textrm{there is a sequence}\,\,(x_n,p_n,X_n)_{n\in\mathbb{N}}\subset\\
&\qquad\qquad\qquad\mathcal{O}\times J_\mathcal{O}^{2, -}u(x_n)\,\,\textrm{such that}\,\,(x_n,u(x_n),p_n,X_n)\,\,\textrm{converges to}\\ &\qquad\qquad\qquad(x,u(x),p,X)\,\,\textrm{as}\,\,n\rightarrow\infty\}.
\end{split}
\end{equation*}

\begin{defn}{\cite[Definition 2.2]{CIL92}}
Let $F$ satisfy (\ref{e:con}) and $\mathcal{O}\subset\mathbb{R}^N$. A viscosity subsolution of $F=0$ (equivalently, a viscosity solution of $F\leq 0$) on
$\mathcal{O}$ is a function $u\in \mathrm{USC}(\mathcal{O})$ such that
\begin{equation*}
F(x, u(x), p, X)\leq 0,\,\,\textrm{for all}\,\, x\in\mathcal{O}\,\, \mathrm{and}\,\, (p, X)\in J_\mathcal{O}^{2, +}u(x).
\end{equation*}

Similarly, a viscosity supersolution of $F=0$ (equivalently, a viscosity solution of $F\geq 0$) on $\mathcal{O}$ is a function $u\in
\mathrm{LSC}(\mathcal{O})$ such that
\begin{equation*}
F(x, u(x), p, X)\geq 0,\,\,\textrm{for all}\,\, x\in\mathcal{O}\,\, \mathrm{and}\,\, (p, X)\in J_\mathcal{O}^{2, -}u(x).
\end{equation*}

Finally, $u$ is a viscosity of $F=0$ in $\mathcal{O}$ if it is both a viscosity subsolution and a viscosity supersolution of $F=0$ in $\mathcal{O}$.
\end{defn}

The following lemma is used in the proof of Theorem \ref{t:VisSol}.
\begin{lem}\label{l:LipsCont}
There is a positive number $r_0$ such that $V_i$ is Lipschitz continuous for each $i=1,2,\cdots,m$ if $r>r_0$.
\end{lem}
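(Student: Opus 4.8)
The plan is to estimate the difference $|V_i(y) - V_i(y')|$ directly from the definition of the value function by a coupling argument on the controlled diffusion $Y$. For fixed $i$ and two initial points $y, y' \in \mathbb{R}$, let $Y$ and $Y'$ be the solutions of the SDE driven by the \emph{same} Brownian motion $B$ and the \emph{same} Markov chain $X$ with $X(0) = e_i$, started at $Y(0) = y$ and $Y'(0) = y'$ respectively. Since $\alpha(e_j, \cdot)$ and $\beta(e_j, \cdot)$ are Lipschitz for every $j$ with a common constant $K$ (there are finitely many states), a standard Gronwall/It\^o-isometry estimate gives $\mathbb{E}\bigl[\sup_{s \le t}|Y(s) - Y'(s)|^2\bigr] \le e^{Ct}|y - y'|^2$ for a constant $C$ depending only on $K$; in particular $\mathbb{E}|Y(t) - Y'(t)| \le e^{Ct/2}|y - y'|$ for all $t \ge 0$. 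Note that $X$ is common to both systems, so $X(t)$ appears in both and causes no extra error term.

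Next I would bound $|J^\tau(e_i,y) - J^\tau(e_i,y')|$ uniformly in $\tau \in \mathcal{T}$. Using that $f(e_j,\cdot)$ and $g(e_j,\cdot)$ are Lipschitz (again with a common constant $L$ over the finitely many states), we get
\begin{equation*}
|J^\tau(e_i,y) - J^\tau(e_i,y')| \le \mathbb{E}\!\left[\int_0^\tau e^{-rt} L\,|Y(t) - Y'(t)|\,\mathrm{d}t + e^{-r\tau} L\,|Y(\tau) - Y'(\tau)|\right].
\end{equation*}
Plugging in the coupling estimate, the integral term is dominated by $L\int_0^\infty e^{-rt} e^{Ct/2}\,\mathrm{d}t\,|y-y'|$, which is finite and bounded by $\frac{L}{r - C/2}|y-y'|$ as soon as $r > C/2$. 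For the terminal term, $e^{-r\tau}\mathbb{E}|Y(\tau)-Y'(\tau)| \le \mathbb{E}[e^{-r\tau}e^{C\tau/2}]|y-y'|$ (using conditional Jensen on the coupling bound, which holds pathwise in the sense that $\mathbb{E}[|Y(\tau)-Y'(\tau)| \mid \mathscr{F}_0]$-type estimates carry over to stopping times by optional arguments / the strong Markov property), and this is $\le |y-y'|$ once $r \ge C/2$. Taking the supremum over $\tau$ and then the supremum/infimum over the two value functions yields $|V_i(y) - V_i(y')| \le \bigl(\tfrac{L}{r - C/2} + 1\bigr)|y - y'|$, so $V_i$ is Lipschitz whenever $r > r_0 := C/2$, with the constant $C$ the same for all $i$.

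The main obstacle I anticipate is making the terminal-payoff estimate fully rigorous when $\tau$ is an arbitrary (possibly unbounded, possibly infinite) stopping time: one must justify that $\mathbb{E}\bigl[e^{-r\tau}|Y(\tau) - Y'(\tau)|\bigr] \le \mathbb{E}\bigl[e^{-r\tau}e^{C\tau/2}\bigr]|y-y'|$, which requires controlling $|Y(\tau) - Y'(\tau)|$ by $e^{C\tau/2}|y-y'|$ in an $L^1$ sense rather than only at deterministic times. The cleanest route is to first establish the moment bound $\mathbb{E}\bigl[\sup_{s \le t}|Y(s)-Y'(s)|^2\bigr] \le e^{Ct}|y-y'|^2$ for all deterministic $t$, then write $e^{-r\tau}|Y(\tau)-Y'(\tau)| \le \sum_{n=0}^\infty e^{-rn}\mathbbm{1}_{\{n \le \tau < n+1\}}\sup_{s \le n+1}|Y(s)-Y'(s)|$, take expectations, apply Cauchy--Schwarz termwise together with an exponential Markov-type tail bound on $\tau$ obtained from the same $r$-discounting, and sum the resulting geometric-type series — convergence again forcing $r$ to exceed a threshold. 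Absorbing all such thresholds into a single $r_0$ (depending only on the Lipschitz constants of $\alpha, \beta$) completes the argument, and since the handling of $\{\tau = \infty\}$ is covered by the convention $e^{-r\tau}g = 0$ there, no boundary difficulty arises at infinity.
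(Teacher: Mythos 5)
Your proposal is essentially the paper's own argument: the paper simply cites \cite[p.~96, Lemma 5.2.1]{Pham2009}, whose proof is exactly this coupling of $Y$ and $Y'$ through the same $B$ and $X$, the Gronwall/It\^o moment estimate on $|Y(t)-Y'(t)|$, and the bound $\sup_{\tau}|J^{\tau}(e_i,y)-J^{\tau}(e_i,y')|\le \mathrm{const}\,|y-y'|$ once $r$ exceeds the exponential growth rate coming from the Lipschitz constants of $\alpha$ and $\beta$. One small remark: in your final decomposition over $\{n\le\tau<n+1\}$ no ``exponential tail bound on $\tau$'' is available (nor needed) --- bounding $\mathbb{P}(n\le\tau<n+1)$ by $1$ in the termwise Cauchy--Schwarz already gives a geometric series convergent for $r>C/2$, so the argument closes without that ingredient.
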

\begin{proof}
The proof is similar to that of \cite[p.~96, Lemma 5.2.1]{Pham2009}.
\end{proof}

Now we provide a proof of Theorem \ref{t:VisSol}.
\begin{proof}[Proof of Theorem \ref{t:VisSol}]
1. Following \cite[Section 4.2]{Touzi2013}, we have the dynamic programming principle \eqref{e:DynProPri}. Also refer to \cite[Theorem 4]{el1982optimal}. For any $j\in \{1,2,\cdots,m\}$, $y\in\mathbb{R}$ and $\theta\in\mathcal{T}$, we have
\begin{equation}\label{e:DynProPri}
\begin{aligned}
V_j(y)=\sup\limits_{\tau\in\mathcal{T}}\mathbb{E}&\left[\int_0^{\theta\wedge\tau}\exp(-rt)f(X(t),Y(t))\mathrm{d}t\right.\\
&\left.+\exp(-r\tau)g(X(\tau),Y(\tau))\mathbbm{1}_{\{\theta>\tau\}}\right.\\
&+\left.\exp(-r\theta)V(X(\theta),Y(\theta))\mathbbm{1}_{\{\theta\leq\tau\}}|X(0)=e_j,Y(0)=y\right].
\end{aligned}
\end{equation}
2. \textit{The viscosity supersolution property.} Fix an $i\in \{1,2,\cdots,m\}$. Suppose that $\varphi_i\in C^2(\mathbb{R})$ and $V_i-\varphi_i$ has a
minimum at some point $y_i\in\mathbb{R}$ such that $V_i(y_i)=\varphi_i(y_i)$. For any positive integer $k$, choose some functions $\varphi_j^k\in
C^2(\mathbb{R})$ such that $\max\limits_{y\in\mathbb{R}}|\varphi_j^k(y)-V_j(y)|<1/k$, where $j\neq i$ (see, for example, \cite[p.~63, Lemma 1.3]{Zhang1996}). Set $\varphi_i^k:=\varphi_i$ for convenience.

Define a stopping time $\theta_0:=\inf\{t:t>0,X(t)\neq e_i, |Y(t)-y_i|>1\}$. Then by the dynamic programming principle (\ref{e:DynProPri}), for any positive
number $\varepsilon$, we have, via taking $\tau=\theta=\theta_0\wedge\varepsilon$,
\begin{equation}\label{e:DynProPri1}
\begin{aligned}
&V_i(y_i)\geq\mathbb{E}\left[\left.\int_0^{\theta_0\wedge\varepsilon}\exp(-rt)f(X(t),Y(t))\mathrm{d}t\right.\right.\\
&\qquad\left.\left.+\exp(-r(\theta_0\wedge\varepsilon))V(X(\theta_0\wedge\varepsilon),Y(\theta_0\wedge\varepsilon))\right|X(0)=e_i,Y(0)=y_i\right]\\
&\geq-\frac{1}{k}+\mathbb{E}\left[\left.\int_0^{\theta_0\wedge\varepsilon}\exp(-rt)f(X(t),Y(t))\mathrm{d}t\right.\right.\\
&\quad\left.\left.+\exp(-r(\theta_0\wedge\varepsilon))\sum\limits_{j=1}^{m}\varphi_j^k(Y(\theta_0\wedge\varepsilon))X_j(\theta_0\wedge\varepsilon)
\right|X(0)=e_i,Y(0)=y_i\right].
\end{aligned}
\end{equation}

In addition, by It\^{o}'s formula, we have
\begin{equation}\label{e:DynIto}
\begin{aligned}
&\quad\exp(-r(\theta_0\wedge\varepsilon))\sum\limits_{j=1}^{m}\varphi_j^k(Y(\theta_0\wedge\varepsilon))X_j(\theta_0\wedge\varepsilon)\\
&=\sum\limits_{j=1}^{m}\varphi_j^k(Y(0))X_j(0)+\sum\limits_{j=1}^{m}\int_0^{\theta_0\wedge\varepsilon}
\exp(-rt)\left(\frac{1}{2}\beta(X(t),Y(t))^2\frac{\partial^2\varphi_j^k}{\partial y^2}(Y(t))\right.\\
&\quad\left.+\alpha(X(t),Y(t))\frac{\partial\varphi_j^k}{\partial y}(Y(t))-r\varphi_j^k(Y(t))+\sum\limits_{q=1}^{m}
a_{qj}\varphi_q^k(Y(t))\right)X_j(t)\mathrm{d}t\\
&\quad+\sum\limits_{j=1}^{m}\int_0^{\theta_0\wedge\varepsilon}\exp(-rt)\beta(X(t),Y(t))\frac{\partial\varphi_j^k}{\partial y}(Y(t))X_j(t)\mathrm{d}B(t)\\
&\quad+\sum\limits_{j=1}^{m}\int_0^{\theta_0\wedge\varepsilon}\exp(-rt)\varphi_j^k(Y(t))\mathrm{d}M_j(t).
\end{aligned}
\end{equation}

By combining (\ref{e:DynProPri1}) and (\ref{e:DynIto}), it follows that
\[
\begin{aligned}
&\sum\limits_{j=1}^{m}\mathbb{E}\left[\frac{1}{\varepsilon}\left.\int_0^{\theta_0\wedge\varepsilon}\exp(-rt)\left(-f_j(Y(t))
+r\varphi_j^k(Y(t))\right.\right.\right.\\
&\quad-\frac{1}{2}\beta(X(t),Y(t))^2\frac{\partial^2\varphi_j^k}{\partial y^2}(Y(t))-\alpha(X(t),Y(t))\frac{\partial\varphi_j^k}{\partial y}(Y(t))\\
&\quad\left.\left.\left.-\sum\limits_{q=1}^{m}
a_{qj}\varphi_q^k(Y(t))\right)X_j(t)\mathrm{d}t
\right|X(0)=e_i,Y(0)=y_i\right]\geq-\frac{1}{k\varepsilon},
\end{aligned}
\]
i.e., as $X(t)=e_i$ for $t\in(0,\theta_0)$,
\[
\begin{aligned}
&\mathbb{E}\left[\frac{1}{\varepsilon}\left.\int_0^{\theta_0\wedge\varepsilon}\exp(-rt)\left(-f_i(Y(t))
+r\varphi_i(Y(t))-\mathcal{L}_i\varphi_i(Y(t))\right.\right.\right.\\
&\qquad\qquad\qquad\left.\left.\left.-\sum\limits_{q=1}^{m}a_{qi}\varphi_q^k(Y(t))\right)\mathrm{d}t
\right|X(0)=e_i,Y(0)=y_i\right]\geq-\frac{1}{k\varepsilon}.
\end{aligned}
\]
By sending $k\rightarrow+\infty$, it follows that
\[
\begin{aligned}
&\mathbb{E}\left[\frac{1}{\varepsilon}\left.\int_0^{\theta_0\wedge\varepsilon}\exp(-rt)\left(-f_i(Y(t))
+r\varphi_i(Y(t))-\mathcal{L}_i\varphi_i(Y(t))\right.\right.\right.\\
&\qquad\qquad\qquad\left.\left.\left.-\sum\limits_{q=1}^{m}a_{qi}V_q(Y(t))\right)\mathrm{d}t
\right|X(0)=e_i,Y(0)=y_i\right]\geq 0.
\end{aligned}
\]
Now taking limits in the above inequality as $\varepsilon\rightarrow 0^+$ and applying the mean value theorem, we obtain
\begin{equation}\label{e:SupeSolu1}
rV_i(y_i)-\mathcal{L}_i\varphi_i(y_i)-\sum\limits_{q=1}^{m} a_{qi}V_q(y_i)-f_i(y_i)\geq0.
\end{equation}

By the definition of $V_i$, we have
\begin{equation}\label{e:Supesolu2}
V_i(y_i)-g_i(y_i)\geq0.
\end{equation}

From (\ref{e:SupeSolu1}) and (\ref{e:Supesolu2}), we see that $V_i$ is a viscosity supersolution of the variational inequality $(\ref{e:VariIneq})$.

\noindent 3. \textit{The viscosity subsolution property.} Fix an $i\in \{1,2,\cdots,m\}$. Suppose that $\varphi_i\in C^2(\mathbb{R})$ and $V_i-\varphi_i$
has a maximum at some point $\bar{y}_i\in\mathbb{R}$ such that $V_i(\bar{y}_i)=\varphi_i(\bar{y}_i)$. We will prove that $V_i$ is a viscosity subsolution
of the variational inequality (\ref{e:VariIneq}) by contradiction.

Suppose that
\[
rV_i(\bar{y}_i)-\mathcal{L}_i\varphi_i(\bar{y}_i)-\sum\limits_{q=1}^{m} a_{qi}V_q(\bar{y}_i)-f_i(\bar{y}_i)>0,
\]
and
\[
V_i(\bar{y}_i)-g_i(\bar{y}_i)>0.
\]

Hence for any positive number $\varepsilon$ small enough and $k$ large enough, thanks to continuity of the functions $\mathcal{L}_i\varphi_i(\cdot)$,
$f_i(\cdot)$ and $V_j(\cdot)$, where $j=1,2,\cdots,m$, there is a positive number $\rho$ such that
\[
r\varphi_i(y)-\mathcal{L}_i\varphi_i(y)-\sum\limits_{q=1}^{m} a_{qi}\varphi_q^k(y)-f_i(y)>\varepsilon\;\mathrm{for}\;y\in(\bar{y}_i-\rho, \bar{y}_i+\rho),
\]
and
\[
\varphi_i(y)-g_i(y)>\varepsilon\;\mathrm{for}\;y\in(\bar{y}_i-\rho, \bar{y}_i+\rho).
\]

Consequently, for $X(0)=e_i$, $Y(0)=\bar{y}_i$, defining $\delta:=\inf\{t:t>0,X(t)\neq e_i,Y(t)\notin(\bar{y}_i-\rho,\bar{y}_i+\rho)\}$, we have
\begin{equation}\label{e:SubSolu1}
r\varphi_i(Y(t))-\mathcal{L}_i\varphi_i(Y(t))-\sum\limits_{q=1}^{m} a_{qi}\varphi_q^k(Y(t))-f_i(Y(t))>\varepsilon\;\mathrm{for}\;0\leq t\leq\delta,
\end{equation}
and
\begin{equation}\label{e:SubSolu2}
\varphi_i(Y(t))-g_i(Y(t))>\varepsilon\;\mathrm{for}\;0\leq t\leq\delta.
\end{equation}
.

For any stopping time $\tau\in\mathcal{T}$, by applying It\^{o}'s formula and noting (\ref{e:SubSolu1}) and (\ref{e:SubSolu2}), we obtain
\begin{equation}\label{e:ContIto}
\begin{aligned}
V_i(\bar{y}_i)&=\varphi_i(\bar{y}_i)\\
&=\mathbb{E}\left[\int_0^{\delta\wedge\tau}\exp(-rt)\left(
r\varphi_i(Y(t))-\mathcal{L}_i\varphi_i(Y(t))
-\sum\limits_{q=1}^{m} a_{qi}\varphi_q^k(Y(t))\right)\mathrm{d}t\right.\\
&\quad\left.\left.+\sum\limits_{j=1}^{m}\exp(-r(\delta\wedge\tau))\varphi_j^k(Y(\delta\wedge\tau))X_j(\delta\wedge\tau)
\right|X(0)=e_i,Y(0)=\bar{y}_i\right]\\
&\geq\mathbb{E}\left[\int_0^{\delta\wedge\tau}\exp(-rt)f(X(t),Y(t))\mathrm{d}t+\exp(-r\tau)g(X(\tau),Y(\tau))
\mathbbm{1}_{\{\delta>\tau\}}\right.\\
&\qquad\qquad+\left.\exp(-r\delta)V(X(\delta),Y(\delta))\mathbbm{1}_{\{\delta\leq\tau\}}|X(0)=e_i,Y(0)=\bar{y}_i\right]\\
&\quad+\varepsilon\mathbb{E}\left[\left.\int_0^{\delta\wedge\tau}\exp(-rt)\mathrm{d}t+\exp(-r\tau)\mathbbm{1}_{\{\delta>\tau\}}\right|X(0)=e_i,Y(0)=\bar{y}_i\right]\\
&\quad-\frac{1}{k}.
\end{aligned}
\end{equation}

We will show in Lemma \ref{l:Claim} ahead that there is a positive constant $C$ such that
\[
\mathbb{E}\left[\left.\int_0^{\delta\wedge\tau}\exp(-rt)\mathrm{d}t+\exp(-r\tau)\mathbbm{1}_{\{\delta>\tau\}}\right|X(0)=e_i,Y(0)=\bar{y}_i\right]\geq C.
\]

Sending $k\rightarrow\infty$ and then taking supremum over $\tau\in\mathcal{T}$ in (\ref{e:ContIto}), we find
\[
V_i(\bar{y}_i)\geq V_i(\bar{y}_i)+\varepsilon C,
\]
which is a contradiction. Consequently, $V_i$ is a viscosity subsolution of the variational inequality $(\ref{e:VariIneq})$.

\noindent 4. \textit{Uniqueness.} The proof of uniqueness is the same as that of \cite[pp.~98--99, Uniqueness property]{Pham2009}. Also refer to
\cite[pp.~31--32]{CIL92}.
\end{proof}

\begin{lem}\label{l:Claim}
Using the notations introduced in the proof of Theorem \ref{t:VisSol}, we have
\[
\mathbb{E}\left[\left.\int_0^{\delta\wedge\tau}\exp(-rt)\mathrm{d}t+\exp(-r\tau)\mathbbm{1}_{\{\delta>\tau\}}\right|X(0)=e_i,Y(0)=\bar{y}_i\right]\geq C
\]
for some positive constant $C$.
\end{lem}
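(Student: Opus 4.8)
The plan is to bound from below the quantity
\[
\Phi(\varepsilon):=\mathbb{E}\left[\left.\int_0^{\delta\wedge\tau}\exp(-rt)\,\mathrm{d}t+\exp(-r\tau)\mathbbm{1}_{\{\delta>\tau\}}\right|X(0)=e_i,Y(0)=\bar{y}_i\right]
\]
uniformly in $\tau\in\mathcal{T}$ by splitting on the event $\{\delta\le\tau\}$ versus $\{\delta>\tau\}$. On $\{\delta\le\tau\}$ we have $\int_0^{\delta\wedge\tau}\exp(-rt)\,\mathrm{d}t=\int_0^{\delta}\exp(-rt)\,\mathrm{d}t\ge\int_0^{\delta\wedge 1}\exp(-rt)\,\mathrm{d}t\ge\exp(-|r|)(\delta\wedge 1)$ (handling the sign of $r$ crudely), while on $\{\delta>\tau\}$ we have $\exp(-r\tau)\mathbbm{1}_{\{\delta>\tau\}}\ge\exp(-r\tau)\mathbbm{1}_{\{\tau<\delta\wedge1\}}\ge\exp(-|r|)\mathbbm{1}_{\{\tau<\delta\wedge1\}}$ (and on $\{\tau\ge\delta\wedge1,\ \delta>\tau\}$ one still gets $\int_0^{\delta\wedge\tau}\exp(-rt)\,\mathrm{d}t\ge\exp(-|r|)(\delta\wedge1)$). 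In every case
\[
\int_0^{\delta\wedge\tau}\exp(-rt)\,\mathrm{d}t+\exp(-r\tau)\mathbbm{1}_{\{\delta>\tau\}}\;\ge\;\exp(-|r|)\bigl(\delta\wedge 1\bigr),
\]
so $\Phi(\varepsilon)\ge\exp(-|r|)\,\mathbb{E}\left[\delta\wedge1\,\middle|\,X(0)=e_i,Y(0)=\bar{y}_i\right]$, and it suffices to show this last expectation is bounded below by a constant not depending on $\tau$ — which it manifestly is not, since $\delta$ depends only on the paths of $X$ and $Y$, not on $\tau$.

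Next I would show $c_0:=\mathbb{E}\left[\delta\wedge1\,\middle|\,X(0)=e_i,Y(0)=\bar{y}_i\right]>0$, i.e. $\mathbb{P}(\delta>0)=1$. Recall $\delta=\inf\{t>0:X(t)\ne e_i,\ Y(t)\notin(\bar{y}_i-\rho,\bar{y}_i+\rho)\}$ requires \emph{both} $X(t)\ne e_i$ \emph{and} $Y$ to have exited the $\rho$-ball; in particular $\delta\ge\delta_X\wedge\delta_Y$ is false, but $\delta\ge$ the first time $Y$ leaves the ball, call it $\delta_Y:=\inf\{t>0:Y(t)\notin(\bar{y}_i-\rho,\bar{y}_i+\rho)\}$ — wait, more carefully, the condition for $\delta$ is a conjunction, so $\delta$ is at least the first exit time of $Y$ from the open ball, hence $\delta\ge\delta_Y$. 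Since $Y$ starts at the center $\bar{y}_i$ of the open interval and has continuous paths, $\delta_Y>0$ a.s.; moreover, by a standard estimate for one-dimensional diffusions with Lipschitz coefficients and non-degenerate $\beta$ (e.g. via Chebyshev applied to $\sup_{s\le t}|Y(s)-\bar y_i|$ and the Burkholder–Davis–Gundy / Doob inequalities, exactly as in the proof of Lemma \ref{l:LipsCont}), there exist $t_0\in(0,1]$ and $p_0>0$ with $\mathbb{P}(\delta_Y>t_0)\ge p_0$, uniformly in $\bar y_i$ for $\bar y_i$ in any bounded set — and here $\bar y_i$ is fixed anyway. Then $\mathbb{E}[\delta\wedge1]\ge\mathbb{E}[\delta_Y\wedge1]\ge t_0\,\mathbb{P}(\delta_Y>t_0)\ge t_0 p_0=:c_0>0$.

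Combining the two steps, $\Phi(\varepsilon)\ge\exp(-|r|)\,c_0=:C>0$ for every $\tau\in\mathcal{T}$, which is the claim; note $C$ is independent of $\tau$ (and, if one tracks the estimates, locally uniform in $\bar y_i$), which is exactly what the contradiction argument in Step 3 of the proof of Theorem \ref{t:VisSol} needs. The only genuinely substantive point is the uniform lower bound $\mathbb{P}(\delta_Y>t_0)\ge p_0$ on the exit time of the diffusion $Y$ from a fixed neighbourhood of its starting point; this is routine but is the part that uses the hypotheses (Lipschitz $\alpha_i,\beta_i$ and $\beta>0$ are more than enough — even linear growth of the coefficients would do), so I would expect it to be the main obstacle to writing out in full, and I would dispatch it by citing the relevant a priori moment bound for SDEs used already in Lemma \ref{l:LipsCont} rather than reproving it.
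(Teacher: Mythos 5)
Your proof is correct, and it takes a genuinely different and more elementary route than the paper's. You bound the integrand pathwise: on $\{\tau\geq\delta\wedge 1\}$ one has $\delta\wedge\tau\geq\delta\wedge1$, so the integral term alone is at least $\int_0^{\delta\wedge1}\exp(-rt)\,\mathrm{d}t\geq \exp(-|r|)(\delta\wedge1)$, while on $\{\tau<\delta\wedge1\}$ the term $\exp(-r\tau)\mathbbm{1}_{\{\delta>\tau\}}\geq \exp(-|r|)\geq \exp(-|r|)(\delta\wedge1)$ already suffices; hence the expectation is at least $\exp(-|r|)\,\mathbb{E}[\delta\wedge1\,|\,X(0)=e_i,Y(0)=\bar y_i]$, a constant independent of $\tau$, and positivity reduces to $\delta>0$ a.s. The paper instead constructs the barrier $G(y)=C_1\bigl(1-|y-\bar y_i|^2/\rho^2\bigr)$, applies It\^o's formula on $[0,\delta\wedge\tau]$ using $rG-\mathcal{L}_iG\leq1$ and $G\leq1$, and then needs its Steps 1--2 (the frozen diffusion $Y_i$, the exit-time result cited from \cite{Kle98}, and the independence of $X$ and $B$) to establish $\mathbb{E}[|Y(\delta)-\bar y_i|^2]>0$, arriving at $C=C_1\rho^{-2}\mathbb{E}[|Y(\delta)-\bar y_i|^2]$. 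Your argument dispenses with the auxiliary function, the It\^o step and the nondegeneracy of $\beta$ entirely; all it needs is that $\delta$ is the exit time from a neighbourhood of the initial state, so $\delta>0$ a.s.\ by right-continuity of $X$ and continuity of $Y$ (your quantitative estimate $\mathbb{P}(\delta_Y>t_0)\geq p_0$ is more than required, though it does give local uniformity in $\bar y_i$, which neither statement needs). One small caveat: you read $\delta$ as requiring \emph{both} $X(t)\neq e_i$ \emph{and} $Y(t)\notin(\bar y_i-\rho,\bar y_i+\rho)$, and your inequality $\delta\geq\delta_Y$ rests on that reading; for the rest of Step 3 of the proof of Theorem \ref{t:VisSol} (the It\^o computation leading to (\ref{e:ContIto}), which uses (\ref{e:SubSolu1})--(\ref{e:SubSolu2}) along the path) $\delta$ must be the exit time of $(X,Y)$ from $\{e_i\}\times(\bar y_i-\rho,\bar y_i+\rho)$, i.e.\ $\delta=\delta_X\wedge\delta_Y\leq\delta_Y$, exactly as with $\theta_0$ in Step 2 there. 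This does not damage your proof: the first switching time $\delta_X$ is a.s.\ positive (exponential holding time with rate $-a_{ii}$), so $\mathbb{E}[\delta\wedge1]>0$ still holds and your constant $C=\exp(-|r|)\mathbb{E}[\delta\wedge1]$ still works, but the sentence justifying $\mathbb{E}[\delta\wedge1]>0$ should be adjusted accordingly.
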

\begin{proof}
1. Let $Y_i$ be the solution of the following equation,
\[
\mathrm{d}Y_i(t)=\alpha_i(Y_i(t))\mathrm{d}t+\beta_i(Y_i(t))\mathrm{d}B(t),\;\;Y_i(0)=\bar{y}_i.
\]

Define $T_{\bar{y}_i}:=\inf\{t: t>0, |Y_i(t)-\bar{y}_i|=\rho\}$. Since $\beta_i(\cdot)>0$, we have, by \cite[p.~152, Corollary 6.12.2]{Kle98},
\begin{equation}\label{e:Cross1}
\mathbb{P}(T_{\bar{y}_i}<+\infty|Y_i(0)=\bar{y}_i)>0.
\end{equation}

\noindent 2. Define $T:=\inf\{t:t>0, X(t)\neq e_i\}$. Thanks to the independence of $X$ and $B$, it follows that
\[
\begin{split}
&\qquad\mathbb{P}(\exists t\in(0, T),\; \mathrm{s.t.}\;|Y(t)-\bar{y}_i|=\rho|X(0)=e_i, Y(0)=\bar{y}_i)\\
&=\int_0^{+\infty}\mathbb{P}(\exists s\in(0, t),\; \mathrm{s.t.}\;|Y_i(s)-\bar{y}_i|=\rho|Y(0)=\bar{y}_i)(-a_{ii}\exp(a_{ii}t))\mathrm{d}t,
\end{split}
\]
where we have used the fact that $\mathbb{P}(T>t|X(0)=e_i)=\exp(a_{ii}t)$.

The above equality and (\ref{e:Cross1}) yield
\[
\mathbb{P}(\exists t\in(0, T),\; \mathrm{s.t.}\;|Y(t)-\bar{y}_i|=\rho|X(0)=e_i, Y(0)=\bar{y}_i)>0.
\]
Consequently, we obtain
\begin{equation}\label{e:GreatThan0}
\mathbb{E}\left[\left.|Y(\delta)-\bar{y}_i|^2\right|X(0)=e_i,Y(0)=\bar{y}_i\right]>0.
\end{equation}

\noindent 3. Set $B_i:=(\bar{y}_i-\rho,\bar{y}_i+\rho)$ and
\[
C_1:=\min\left\{\left(r+\frac{2}{\rho}\sup\limits_{y\in B_i}|\alpha(e_i,y)|
+\frac{1}{\rho^2}\sup\limits_{y\in B_i}\beta(e_i,y)^2\right)^{-1},1\right\}.
\]
Define a function $G:B_i\rightarrow \mathbb{R}$ by
\[
G(y):=C_1\left(1-\frac{|y-\bar{y}_i|^2}{\rho^2}\right).
\]

Then we have
\[
\begin{aligned}
&\quad\;\mathbb{E}\left[\left.\int_0^{\delta\wedge\tau}\exp(-rt)\mathrm{d}t+\exp(-r\tau)\mathbbm{1}_{\{\delta>\tau\}}\right|X(0)=e_i,Y(0)=\bar{y}_i\right]\\
&\geq\mathbb{E}\left[\left.\int_0^{\delta\wedge\tau}\exp(-rt)\left(rG(Y(t))-\mathcal{L}_iG(Y(t))\right)\mathrm{d}t\right.\right.\\
&\qquad\;+\exp(-r\delta\wedge\tau)G(Y(\delta\wedge\tau))\\
&\qquad\;\left.\left.-\exp(-r\delta)G(Y(\delta))\mathbbm{1}_{\{\delta\leq\tau\}}\right|X(0)=e_i,Y(0)=\bar{y}_i\right]\\
&=G(\bar{y}_i)-\mathbb{E}\left[\left.\exp(-r\delta)G(Y(\delta))\mathbbm{1}_{\{\delta\leq\tau\}}\right|X(0)=e_i,Y(0)=\bar{y}_i\right]\\
&\geq C_1\rho^{-2}\mathbb{E}\left[\left.|Y(\delta)-\bar{y}_i|^2\right|X(0)=e_i,Y(0)=\bar{y}_i\right]=:C>0,
\end{aligned}
\]
where we have used the facts $rG(y)-\mathcal{L}_iG(y)\leq1$ and $G(y)\leq1$ for the first inequality, It\^{o}'s formula for the first equality, and (\ref{e:GreatThan0}) for the last inequality. The proof is complete.
\end{proof}

We will prove that $V$ is uniquely determined by the system (\ref{e:VariIneq}). To do this, we introduce some notations.

Set $\mu_i:=\limsup\limits_{x\rightarrow\infty}\left|\frac{\alpha_i(x)}{x}\right|$ and
$\sigma_i:=\limsup\limits_{x\rightarrow\infty}\left|\frac{\beta_i(x)}{x}\right|$, $i=1,2,\cdots,m$. Then we define the functions
$w_i(\lambda):=r-a_{ii}-\mu_i\lambda-\frac{1}{2}\sigma_i^2\lambda(\lambda-1)$, $i=1,2,\cdots,m$, and a matrix $H(\lambda):=(h_{ij}(\lambda))_{m\times m}$, where
$h_{ii}:=w_i$ and $h_{ij}:=-a_{ij}$ for $i\neq j$.

We make the following hypothesis.
\begin{enumerate}
\makeatletter
\renewcommand{\labelenumi}{(H\theenumi)}\renewcommand{\p@enumi}{H}
\makeatother
\item\label{i:H1} There are positive numbers $b_i$'s such that $\sum\limits_{i=1}^mb_i h_{ij}(\overline{\lambda})\geq 0$ for some constant $\overline{\lambda}>1$, $j=1,2,\cdots,m$.
\end{enumerate}

\begin{thm}\label{t:Uniq1}
Assume that \eqref{i:H1} holds. Let $\{U_i\}_{i=1}^m$ be a family of functions defined on $\mathbb{R}$ with at most linear growth such
that for each fixed $i$, $U_i$ is a viscosity solution of
\begin{equation*}
\min\{rU_i-\mathcal{L}_iU_i-\sum\limits_{q=1}^{m}a_{qi}U_q-f_i, U_i-g_i\}=0\;\,\mathrm{on}\;\,\mathbb{R}.
\end{equation*}
Then $U_i=V_i$ for $i=1,2,\cdots,m$.
\end{thm}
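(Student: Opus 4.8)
The plan is to establish the comparison principle for the weakly coupled system \eqref{e:VariIneq} within the class of functions of at most linear growth, and then to apply it twice. Concretely, I would prove: if $\{U_i\}_{i=1}^m\subset\mathrm{USC}(\mathbb{R})$ are viscosity subsolutions and $\{W_i\}_{i=1}^m\subset\mathrm{LSC}(\mathbb{R})$ viscosity supersolutions of \eqref{e:VariIneq}, all of at most linear growth, then $U_i\le W_i$ on $\mathbb{R}$ for every $i$. Granting this, since each $V_i$ is Lipschitz by Lemma \ref{l:LipsCont} (hence of linear growth) and is a viscosity solution by Theorem \ref{t:VisSol}, applying the comparison principle to $U_i$ (as subsolution) against $V_i$ (as supersolution), and then to $V_i$ against $U_i$, gives $U_i\le V_i$ and $V_i\le U_i$, i.e.\ $U_i=V_i$. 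Two elementary facts about $A$, both consequences of $X$ being $I$-valued, will be used throughout: $a_{qi}\ge 0$ for $q\ne i$, and every column of $A$ sums to zero, so $a_{ii}=-\sum_{q\ne i}a_{qi}$; these provide the monotonicity of the coupling that the doubling argument needs.

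The device that controls the unbounded domain --- and the only place where \eqref{i:H1} enters --- is a family of penalizers. With $b_i>0$ and $\overline\lambda>1$ as in \eqref{i:H1}, put $\phi_i(x):=b_i(1+x^2)^{\overline\lambda/2}$; these are superlinear since $\overline\lambda>1$. Using $\mu_i=\limsup_{|x|\to\infty}|\alpha_i(x)/x|$ and $\sigma_i=\limsup_{|x|\to\infty}|\beta_i(x)/x|$, a short asymptotic computation gives, as $|x|\to\infty$,
\[
r\phi_i(x)-\mathcal{L}_i\phi_i(x)-\sum_{q=1}^m a_{qi}\phi_q(x)\;\ge\;\Big(b_i w_i(\overline\lambda)-\sum_{q\ne i}a_{qi}b_q\Big)(1+x^2)^{\overline\lambda/2}+o\big((1+x^2)^{\overline\lambda/2}\big),
\]
and the bracket equals the $i$-th component of $H(\overline\lambda)^{\top}b$, which is $\ge 0$ by \eqref{i:H1}. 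Hence $\psi_i:=r\phi_i-\mathcal{L}_i\phi_i-\sum_q a_{qi}\phi_q$ is bounded below on $\mathbb{R}$; in fact, since $r$ is large, $\psi_i(x)\to+\infty$ as $|x|\to\infty$.

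For a fixed $\eta>0$ I would put $\widehat U_i:=U_i-\eta\phi_i$ and verify, by testing with $C^2$ functions, that $\{\widehat U_i\}$ is a family of viscosity subsolutions of \eqref{e:VariIneq} with the right-hand side replaced by the constant $\eta C_0$, where $C_0:=\max_i\sup_{\mathbb{R}}(-\psi_i)^+<\infty$: on the obstacle branch $\widehat U_i-g_i=U_i-g_i-\eta\phi_i\le-\eta\phi_i<0$, while on the interior branch the extra terms amount to an added $\eta\psi_i$, so the operator applied to $\widehat U$ is $\le-\eta\psi_i\le\eta C_0$. Since $\widehat U_i\to-\infty$ at infinity and $W_i$ grows at most linearly, $\widehat U_i-W_i\to-\infty$, so $m_\eta:=\max_i\sup_{\mathbb{R}}(\widehat U_i-W_i)$ is attained. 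Assuming for contradiction that $U_{i_0}(x_0)>W_{i_0}(x_0)$ for some $i_0,x_0$, we get $m_\eta\ge U_{i_0}(x_0)-W_{i_0}(x_0)-\eta\phi_{i_0}(x_0)>0$ for small $\eta$. I would then double variables: let $(\hat\imath,\hat x,\hat y)$ maximize $\widehat U_i(x)-W_i(y)-|x-y|^2/(2\varepsilon)$; by the standard estimates (superlinearity of the $\phi_i$, linear growth of the $W_i$) the points $\hat x,\hat y$ remain in a fixed compact set, $|\hat x-\hat y|^2/\varepsilon\to 0$ as $\varepsilon\to 0$, and along a subsequence $\hat\imath$ is constant with $\widehat U_{\hat\imath}(\hat x)-W_{\hat\imath}(\hat y)\ge m_\eta$.

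Finally I would apply the Crandall--Ishii lemma at $(\hat x,\hat y)$ and play the (perturbed) subsolution inequality for $\widehat U_{\hat\imath}$ at $\hat x$ against the supersolution inequality for $W_{\hat\imath}$ at $\hat y$. If the obstacle branch is active, $\widehat U_{\hat\imath}(\hat x)-W_{\hat\imath}(\hat y)\le g_{\hat\imath}(\hat x)-g_{\hat\imath}(\hat y)+\eta C_0=o_\varepsilon(1)+O(\eta)$ by Lipschitz continuity of $g_{\hat\imath}$. If the interior branch is active, subtract the two inequalities: the second-order difference is absorbed by the Crandall--Ishii matrix estimate and Lipschitz continuity of $\beta_{\hat\imath}$, the first-order and source terms by Lipschitz continuity of $\alpha_{\hat\imath},f_{\hat\imath}$ together with $|\hat x-\hat y|^2/\varepsilon\to 0$, and the coupling term is handled by maximality of $\hat\imath$: since $\widehat U_q(\hat x)-W_q(\hat y)\le\widehat U_{\hat\imath}(\hat x)-W_{\hat\imath}(\hat y)$ for every $q$, while $a_{q\hat\imath}\ge 0$ for $q\ne\hat\imath$ and $a_{\hat\imath\hat\imath}+\sum_{q\ne\hat\imath}a_{q\hat\imath}=0$, one has $\sum_q a_{q\hat\imath}(\widehat U_q(\hat x)-W_q(\hat y))\le 0$. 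In either case $r\,m_\eta\le r\big(\widehat U_{\hat\imath}(\hat x)-W_{\hat\imath}(\hat y)\big)\le o_\varepsilon(1)+O(\eta)$; letting $\varepsilon\to 0$ then $\eta\to 0$ (using $r>0$ and $m_\eta\ge U_{i_0}(x_0)-W_{i_0}(x_0)-\eta\phi_{i_0}(x_0)$) forces $U_{i_0}(x_0)\le W_{i_0}(x_0)$, a contradiction. Thus $U_i\le W_i$ for all $i$, and the theorem follows. I expect the real work to lie in the penalizer step --- squeezing out of \eqref{i:H1} and the $\limsup$ definitions of $\mu_i,\sigma_i$ that $\psi_i$ is bounded below --- since this is exactly what turns the linear-growth bound into the compactness the doubling argument requires; the coupling term, by contrast, collapses cleanly once maximality of the index is combined with the sign structure of $A$.
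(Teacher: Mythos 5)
Your argument is correct and follows essentially the same route as the paper's proof: a superlinear penalization built from \eqref{i:H1}, doubling of variables with the Crandall--Ishii lemma, a split between the obstacle branch and the interior branch, and the coupling term killed by maximality over the index combined with $a_{qi}\ge 0$ for $q\ne i$ and the zero column sums of $A$. The only cosmetic difference is that you perturb the subsolution downward by $\eta\phi_i$ (accepting an $O(\eta)$ error and phrasing the result as a general sub/supersolution comparison applied twice), whereas the paper adds $\varepsilon\psi_i$ to $V_i$ to obtain an exact supersolution and compares $U_i$ with $V_i+\varepsilon\psi_i$ directly before letting $\varepsilon\to 0$.
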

\begin{proof}
1. Since $U_i$ and $V_i$, $i=1,2,\cdots,m$, grow at most linearly, we have
\[
\lim\limits_{x\rightarrow\infty}(U_i(x)-V_i(x)-b_i|x|^{\overline{\lambda}})=-\infty,\;i=1,2,\cdots,m.
\]

\noindent 2. Thanks to \eqref{i:H1}, there is a positive constant $K$ such that, for each $i=1,2,\cdots,m$,
\[
\min\{r\widetilde{\psi}_i-\mathcal{L}_i\widetilde{\psi}_i-\sum\limits_{q=1}^{m}a_{qi}\widetilde{\psi}_q, \widetilde{\psi}_i-g_i\}\geq0\;\;\mathrm{on}\;\,\mathbb{R}\setminus[-K,K],
\]
where $\widetilde{\psi}_i(x):=b_i|x|^{\overline{\lambda}}$.

Choose some functions $\overline{\psi}_i\in C^2(\mathbb{R})$ satisfying $\overline{\psi}_i|_{\mathbb{R}{\setminus[-K,K]}}=\widetilde{\psi}_i$, $i=1,2,\cdots,m$. Then it follows that
\[
\min\{r\psi_i-\mathcal{L}_i\psi_i-\sum\limits_{q=1}^{m}a_{qi}\psi_q, \psi_i-g_i\}\geq0\;\;\mathrm{on}\;\,\mathbb{R},
\]
where $\psi_i(x):=C+\overline{\psi}_i(x)$, $i=1,2,\cdots,m$, for some constant $C$ large enough.

\noindent 3. Define $V_i^\varepsilon:=V_i+\varepsilon\psi_i$ for $\varepsilon\in(0,1)$ and $i=1,2,\cdots,m$. Then $V_i^\varepsilon$ is a viscosity
supersolution of
\begin{equation*}
\min\{rV_i^\varepsilon-\mathcal{L}_iV_i^\varepsilon-\sum\limits_{q=1}^{m}a_{qi}V_q^\varepsilon-f_i, V_i^\varepsilon-g_i\}=0\;\,\mathrm{on}\;\,\mathbb{R}.
\end{equation*}

\noindent 4. We prove $U_i\leq V_i^\varepsilon$, $i=1,2,\cdots,m$, by contradiction. Otherwise, we have by Step 1
\begin{equation}\label{e:Maxi}
\begin{aligned}
M:&=\max\limits_{i\in\{1,\cdots,m\}}\sup\limits_{x\in\mathbb{R}}\{U_i(x)-V_i^\varepsilon(x)\}\\
&=\max\limits_{x\in\mathbb{R}}\{U_k(x)-V_k^\varepsilon(x)\}
=U_k(\hat{x})-V_k^\varepsilon(\hat{x})>0
\end{aligned}
\end{equation}
for some $k\in\{1,2,\cdots,m\}$ and $\hat{x}\in\mathbb{R}$.

For each positive integer $n\in\mathbb{N}$, we define a function $\Phi_n$ by
\[
\Phi_n(x,y):=U_k(x)-V_k^\varepsilon(y)-\frac{n}{2}(x-y)^2.
\]
In virtue of Step 1, we take $\mathcal{O}:=(-R,R)$ with $U_k(x)-V_k^\varepsilon(x)<0$ for all $|x|\geq R$. Then $\Phi_n$ has a maximum point $(x_n,y_n)$ on
$\overline{\mathcal{O}}$. By the similar argument to the step 1 of the proof of \cite[p.~77, Theorem 4.4.4]{Pham2009}, we have, up to a subsequence,
\begin{equation}\label{e:LimiZero}
\lim\limits_{n\rightarrow\infty}\Phi_n(x_n,y_n)=M,\;\lim\limits_{n\rightarrow\infty}n(x_n-y_n)^2=0\;\mathrm{and}\;
\lim\limits_{n\rightarrow\infty}x_n=\lim\limits_{n\rightarrow\infty}y_n=\hat{x}.
\end{equation}

Therefore, for $n$ large enough, $(x_n,y_n)$ is a local maximum point of $\Phi_n$ relative to $\mathcal{O}$. Consequently, according to \cite[p.~17, p.~19]{CIL92}, there are two constants $a$ and $b$ with $a\leq b$ such that
\[
(n(x_n-y_n),a)\in\overline{J}_\mathcal{O}^{2,+}U_k(x_n),\;(n(x_n-y_n),b)\in\overline{J}_\mathcal{O}^{2,-}V_k(x_n)
\]
and
\begin{equation}\label{E:IshiLemm}
a\beta_k(x_n)^2-b\beta_k(y_n)^2\leq 3n(\beta_k(x_n)-\beta_k(y_n))^2.
\end{equation}
Then, in light of the viscosity property of $U_k$ and $V_k^{\varepsilon}$, it follows that
\begin{equation}\label{e:SubVisUk}
\begin{aligned}
\min\{&rU_k(x_n)-\alpha_k(x_n)(n(x_n-y_n))-\frac{a}{2}\beta_k(x_n)^2\\
&-\sum\limits_{q=1}^{m}a_{qk}U_q(x_n)-f_k(x_n), U_k(x_n)-g_k(x_n)\}\leq0
\end{aligned}
\end{equation}
and
\begin{equation}\label{e:SupVisVk}
\begin{aligned}
\min\{&rV_k^\varepsilon(y_n)-\alpha_k(y_n)(n(x_n-y_n))-\frac{b}{2}\beta_k(y_n)^2\\
&-\sum\limits_{q=1}^{m}a_{qk}V_q^\varepsilon(y_n)-f_k(y_n),V^\varepsilon_k(y_n)-g_k(y_n)\}\geq0.
\end{aligned}
\end{equation}

\noindent \texttt{Case 1} $U_k(x_n)-g_k(x_n)>0$ for all $n$ large enough.

Subtracting (\ref{e:SupVisVk}) from (\ref{e:SubVisUk}) yileds
\[
\begin{split}
r(U_k(x_n)-V^\varepsilon_k(y_n))\leq &n(\alpha_k(x_n)-\alpha_k(y_n))(x_n-y_n)+\frac{a}{2}\beta_k(x_n)^2-\frac{b}{2}\beta_k(y_n)^2\\
&+\sum\limits_{q=1}^{m}a_{qk}(U_q(x_n)-V_q^\varepsilon(y_n))+f_k(x_n)-f_k(y_n)
\end{split}
\]
for all $n$ large enough.

Note (\ref{e:LimiZero}), (\ref{E:IshiLemm}) and the Lipschitz continuity of the functions $\alpha_k$, $\beta_k$ and $f_k$. By taking limits in the above
inequality as $n\rightarrow+\infty$, it follows that
\[
r(U_k(\hat{x})-V^\varepsilon_k(\hat{x}))\leq \sum\limits_{q=1}^{m}a_{qk}(U_q(\hat{x})-V_q^\varepsilon(\hat{x})).
\]

Therefore, in light of \eqref{e:Maxi} and $\sum\limits_{q=1}^{m}a_{qk}=0$, we have $rM\leq 0$, which is a contradiction.

\noindent \texttt{Case 2} $U_k(x_n)-g_k(x_n)\leq0$ frequently.

In this case, the following inequality
\[
U_k(x_n)-V_k^\varepsilon(y_n)\leq g_k(x_n)-g_k(y_n)
\]
holds frequently.

Then, using the continuity of the function $g_k$, we get $M\leq 0$, which is a contradiction.

In conclusion, we have proved $U_i\leq V_i^\varepsilon$, $i=1,2,\cdots,m$.

\noindent 5. Taking limits in $U_i\leq V_i^\varepsilon$, $i=1,2,\cdots,m$, as $\varepsilon\rightarrow 0^+$, we have $U_i\leq V_i$, $i=1,2,\cdots,m$.

\noindent 6. Similarly, $U_i\geq V_i$, $i=1,2,\cdots,m$. This and Step 5 imply $U_i=V_i$, $i=1,2,\cdots,m$.
\end{proof}

\begin{cor}\label{c:Uniq2}
Assume that
\begin{equation}\label{e:TechCond}
 r>\max\{\mu_1,\mu_2,\cdots,\mu_m\}.
\end{equation}
Let $U_i$, $i=1, 2,\cdots, m$ be a family of functions defined on $\mathbb{R}$ with at most linear growth such that for each $i$, $U_i$ is a viscosity
solution of
\begin{equation*}
\min\{rU_i-\mathcal{L}_iU_i-\sum\limits_{q=1}^{m}a_{qi}U_q-f_i, U_i-g_i\}=0\;\,\mathrm{on}\;\,\mathbb{R}.
\end{equation*}
Then $U_i=V_i$ for $i=1,2,\cdots,m$.
\end{cor}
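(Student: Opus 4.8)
The plan is to reduce the corollary to Theorem \ref{t:Uniq1} by showing that the explicit condition \eqref{e:TechCond} forces hypothesis \eqref{i:H1}. Once \eqref{i:H1} is in place, all the hypotheses of Theorem \ref{t:Uniq1} are met (the $U_i$'s are assumed to have at most linear growth in both statements), and the conclusion $U_i=V_i$ follows at once. So everything comes down to exhibiting positive numbers $b_1,\dots,b_m$ and a constant $\overline{\lambda}>1$ with $\sum_{i=1}^m b_i h_{ij}(\overline{\lambda})\geq 0$ for each $j=1,\dots,m$.

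First I would try the simplest candidate, $b_1=\cdots=b_m=1$. Recalling $h_{jj}=w_j$ and $h_{ij}=-a_{ij}$ for $i\neq j$, and using that the rate matrix $A$ has columns summing to zero (the identity $\sum_q a_{qk}=0$ already invoked in the proof of Theorem \ref{t:Uniq1}), the $a_{jj}$ terms cancel and one gets
\[
\sum_{i=1}^m h_{ij}(\lambda)=w_j(\lambda)+a_{jj}=r-\mu_j\lambda-\tfrac12\sigma_j^2\lambda(\lambda-1),\quad j=1,\dots,m.
\]
At $\lambda=1$ the right-hand side equals $r-\mu_j$, which is strictly positive for every $j$ by \eqref{e:TechCond}. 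Since $\alpha_j$ and $\beta_j$ are Lipschitz, $\mu_j$ and $\sigma_j$ are finite, so each map $\lambda\mapsto r-\mu_j\lambda-\tfrac12\sigma_j^2\lambda(\lambda-1)$ is continuous and hence stays positive on some interval $[1,1+\eta_j)$ with $\eta_j>0$. Choosing $\overline{\lambda}\in(1,1+\min_{1\le j\le m}\eta_j)$, which is possible because there are only finitely many indices, gives $\sum_{i=1}^m b_i h_{ij}(\overline{\lambda})>0$ for all $j$ with $b_i\equiv 1$; this is exactly \eqref{i:H1}, and an appeal to Theorem \ref{t:Uniq1} finishes the proof.

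I do not expect a genuine obstacle here — the argument is an elementary continuity/perturbation estimate — but two points deserve care. The first is the sign and transpose convention for $A$: one must use the column-sum-zero relation (consistent with how $\sum_q a_{qk}=0$ is used elsewhere in the paper), since the wrong convention would destroy the cancellation of $a_{jj}$. The second is that \eqref{i:H1} demands $\overline{\lambda}$ \emph{strictly} greater than $1$; this is precisely why the strictness of $r>\mu_j$ matters, as it leaves room to push $\lambda$ slightly above $1$ while keeping the expression nonnegative. Should a single $\overline{\lambda}$ working simultaneously for all $j$ ever be an issue, one could localize the argument index by index, but finiteness of $m$ makes that unnecessary.
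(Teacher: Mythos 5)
Your proposal is correct and follows essentially the same route as the paper: take $b_i\equiv 1$, use the column-sum identity $\sum_{i=1}^m a_{ij}=0$ to get $\sum_{i=1}^m h_{ij}(\lambda)=r-\mu_j\lambda-\tfrac12\sigma_j^2\lambda(\lambda-1)$, and observe that \eqref{e:TechCond} makes this positive at $\lambda=1$, hence (by continuity) at some $\overline{\lambda}>1$, which is \eqref{i:H1} and lets Theorem \ref{t:Uniq1} conclude. Your write-up merely spells out the continuity step that the paper leaves implicit.
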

\begin{proof}
Noting that $\sum\limits_{i=1}^m a_{ij}=0$, $j=1,2,\cdots,m$, we have $\sum\limits_{i=1}^m h_{ij}(\lambda)=r-\mu_j\lambda-\sigma_j^2\lambda(\lambda-1)$. This and \eqref{e:TechCond} yield \eqref{i:H1}.
\end{proof}

Let us introduce some notations as follows.
\[
\mathcal{C}_i:=\{y:V_i(y)>g_i(y)\},
\]
\[
\mathcal{S}_i:=\{y:V_i(y)=g_i(y)\},
\]
\[
\mathcal{D}_i:=\{y:rg_i(y)-\mathcal{L}_ig_i(y)-\sum\limits_{q=1}^ma_{qi}g_q(y)-f_i(y)\geq 0\},
\]
and
\[
\widehat{V}_i(y):=\mathbb{E}\left[\left.\int_{0}^{\infty}\exp(-rt)f(X(t),Y(t))\mathrm{d}t\right|X(0)=e_i,Y(0)=y\right],
\]
for each $i=1,2,\cdots,m$.

We consider the regularity of $V$ in the following theorem.
\begin{thm}\label{t:Regu}
The function $V_i$ is $C^2$ continuous on $\mathcal{C}_i$ and $C^1$ continuous on
$\partial \mathcal{C}_i$, $i=1,2,\cdots,m$.
\end{thm}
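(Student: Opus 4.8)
The strategy is to observe that on the continuation region the obstacle in \eqref{e:VariIneq} is inactive, so that $V_i$ there solves a genuinely non-degenerate linear equation with continuous coefficients and right-hand side, and then to propagate this regularity up to the free boundary.

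\textbf{Step 1 ($C^2$ inside $\mathcal C_i$).} Since $V_i$ and $g_i$ are continuous, $\mathcal C_i=\{V_i>g_i\}$ is open, and on it the second entry of the minimum in \eqref{e:VariIneq} is strictly positive; hence $V_i$ is a viscosity solution on $\mathcal C_i$ of the linear equation
\[
\tfrac12\beta_i(y)^2\,\xi''+\alpha_i(y)\,\xi'-(r-a_{ii})\,\xi=h_i(y),\qquad h_i:=-\sum_{q\neq i}a_{qi}V_q-f_i .
\]
Here $h_i$ is continuous (each $V_q$ is, by Lemma \ref{l:LipsCont}, and $f_i$ is Lipschitz), $\tfrac12\beta_i^2$ is continuous and bounded below by a positive constant on every compact subinterval of $\mathcal C_i$, and $\alpha_i$ is continuous, so on every compact subinterval this is a uniformly elliptic second-order ODE. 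Fix $y_0\in\mathcal C_i$ and $\delta>0$ with $(y_0-\delta,y_0+\delta)\subset\mathcal C_i$; for $\delta$ small the associated two-point Dirichlet problem with boundary data $V_i(y_0\pm\delta)$ has a classical solution $w\in C^2$. Since $w$ is also a viscosity solution of the same equation, continuous up to the closed interval, with the same boundary values as $V_i$, and since $r-a_{ii}>0$ ($r$ being large and $a_{ii}<0$) so that the properness condition \eqref{e:con} holds, the comparison principle for viscosity solutions — carried out exactly as in the uniqueness part of Theorem \ref{t:Uniq1} (see also \cite{CIL92}) — forces $V_i=w$ on $(y_0-\delta,y_0+\delta)$. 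As $y_0\in\mathcal C_i$ was arbitrary, $V_i\in C^2(\mathcal C_i)$.

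\textbf{Step 2 ($C^2$ estimates up to $\partial\mathcal C_i$, hence $C^1$ at the boundary).} Let $\bar y\in\partial\mathcal C_i$; since $\mathcal C_i$ is open and $\mathcal S_i=\{V_i=g_i\}$ is closed, $\bar y\in\mathcal S_i$. Near $\bar y$ the set $\mathcal C_i$ is a union of open intervals having $\bar y$ as an endpoint; pick one, say $(\bar y,c)\subset\mathcal C_i$ (the left case being symmetric). By Lemma \ref{l:LipsCont} $V_i$ is globally Lipschitz, so $|V_i'|\le L$ on $(\bar y,c)$; feeding this, together with the boundedness of $\alpha_i$, $\beta_i^{-2}$, $V_i$ and $h_i$ near $\bar y$, into the equation of Step 1 shows that $V_i''$ is bounded on $(\bar y,\bar y+\delta)$ for $\delta$ small. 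Hence $V_i'$ is Lipschitz on $(\bar y,\bar y+\delta)$, so it extends continuously to $\bar y$ — write $V_i'(\bar y+)$ for that limit — and then the equation forces $V_i''$ to extend continuously as well. Thus $V_i$ is one-sidedly $C^2$ on $[\bar y,\bar y+\delta)$, and in particular $V_i\in C^1(\overline{\mathcal C_i})$, which already establishes $C^1$ regularity at every point of $\partial\mathcal C_i$ that $\mathcal C_i$ reaches from only one side.

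\textbf{Step 3 (smooth fit, the main obstacle).} What remains is to show that the extended derivative is genuinely continuous at $\bar y$, i.e.\ that $V_i'(\bar y-)=V_i'(\bar y+)$ when $\mathcal C_i$ touches $\bar y$ from both sides (and, more generally, that it agrees with the relevant one-sided derivative of $g_i$ when the obstacle is active on one side). This is the delicate point, and I expect it to be the main difficulty: the viscosity inequalities \emph{evaluated at the single point} $\bar y$ carry no information, because the obstacle term vanishes there, so the matching of slopes must instead be extracted from the fact that $V_i$ solves the linear equation on a one-sided punctured neighbourhood of $\bar y$ together with the constraint $V_i\ge g_i$, with equality at $\bar y$. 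The plan is to rule out a kink by a barrier/comparison argument on a small one-sided interval adjacent to $\bar y$: using the solvability established in Step 1, build an obstacle-respecting solution (respectively a sub- and a supersolution) of the linear equation with value $V_i(\bar y)$ at $\bar y$ and a derivative at $\bar y$ lying strictly between the two hypothetical one-sided slopes, and then invoke the comparison principle against the value function to reach a contradiction, proceeding as in the smooth-fit arguments of \cite{Pham2009}. The regime-switching feature enters only through the continuous term $\sum_{q\neq i}a_{qi}V_q$ inside $h_i$ and does not affect the argument. Combining Steps 1--3 yields $V_i\in C^2(\mathcal C_i)$ and $V_i\in C^1$ on $\partial\mathcal C_i$.
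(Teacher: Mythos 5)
Your Steps 1--2 are sound and in fact reproduce the route the paper intends: the paper's proof is a one-line reduction to \cite[Lemma 5.2.2, Proposition 5.2.1]{Pham2009} (ellipticity from $\beta>0$ plus the viscosity characterization of Theorem \ref{t:VisSol}), and your local construction of a classical solution of
$\tfrac12\beta_i^2\xi''+\alpha_i\xi'-(r-a_{ii})\xi=h_i$ followed by comparison, and the one-sided $C^2$ bound obtained by reading $V_i''$ off the equation, is exactly the interior-regularity argument of Proposition 5.2.1 adapted to the regime-switching term frozen inside $h_i$.

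The genuine gap is Step 3: the smooth-fit assertion, which is the entire content of ``$C^1$ on $\partial\mathcal{C}_i$'', is not proved but only announced as a plan (``build an obstacle-respecting barrier \dots and invoke the comparison principle''), with no construction of the barrier and no verification that the comparison applies. Moreover, the premise on which you base that detour is not accurate: it is only the \emph{subsolution} inequality that becomes vacuous at a contact point $\bar y\in\partial\mathcal{C}_i$; the \emph{supersolution} inequality $\min\{\cdot,\cdot\}\ge 0$ forces the PDE part $rV_i-\alpha_i p-\tfrac12\beta_i^2 M-\sum_q a_{qi}V_q-f_i\ge 0$ for \emph{every} $(p,M)\in J^{2,-}V_i(\bar y)$, and this is precisely what the cited argument of \cite[Lemma 5.2.2]{Pham2009} exploits. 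Indeed, if $V_i'(\bar y^-)<V_i'(\bar y^+)$ (a convex kink, the one-sided derivatives existing by your Step 2 on the continuation side and by $V_i=g_i$ on the stopping side), then $J^{2,-}V_i(\bar y)$ contains pairs $(p,M)$ with $M$ arbitrarily large, and since $\beta_i(\bar y)^2>0$ the supersolution inequality fails for $M$ large --- a contradiction; the reverse inequality between the one-sided slopes comes for free from $V_i\ge g_i$ with equality at $\bar y$ (and $V_i=g_i$ on the stopping side). So the matching of slopes follows from a short pointwise test-function argument using nondegeneracy, not from a new barrier/comparison construction; as it stands, your proposal leaves the $C^1$-on-$\partial\mathcal{C}_i$ part of the theorem unestablished.
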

\begin{proof}
It follows from $\beta(\cdot,\cdot)>0$ that $\mathcal{L}_i$'s are locally elliptic. Then, thanks to Theorem \ref{t:VisSol}, the proof is completed in the same way as that of \cite[p.~100, Lemma 5.2.2, Proposition 5.2.1]{Pham2009}.
\end{proof}

\begin{thm}
The stopping time $\tau^*:=\inf\{t>0:(X(t),Y(t))\in\bigcup\limits_{i=1}^m\{e_i\}\times\mathcal{S}_i\}$ is an optimal stopping time of the problem \eqref{e:genopt}.
\end{thm}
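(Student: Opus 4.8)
The plan is a verification argument built on Theorem \ref{t:VisSol} (the equation $V$ satisfies on the continuation set) and Theorem \ref{t:Regu} (enough smoothness to run It\^{o}'s formula). I first reduce the claim. Since each $V_i$ is continuous (Lemma \ref{l:LipsCont}) and each $g_i$ is continuous, $\mathcal{S}_i=\{V_i=g_i\}$ is closed, so $\Gamma:=\bigcup_{i=1}^m\{e_i\}\times\mathcal{S}_i$ is closed in $I\times\mathbb{R}$; as $(X,Y)$ has right-continuous paths and the filtration satisfies the usual conditions, the first entry time $\tau^*$ of $\Gamma$ is a stopping time, and on $\{\tau^*<\infty\}$ one has $(X(\tau^*),Y(\tau^*))\in\Gamma$ by right-continuity. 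Then trivially $J^{\tau^*}(e_i,y)\le\sup_{\tau\in\mathcal{T}}J^{\tau}(e_i,y)=V(e_i,y)$, so it suffices to prove $V(e_i,y)\le J^{\tau^*}(e_i,y)$; in fact I will obtain the identity $V(e_i,y)=J^{\tau^*}(e_i,y)$.

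Fix $i,y$, take $X(0)=e_i$, $Y(0)=y$, and set $T_n:=n\wedge\inf\{t>0:|Y(t)|\ge n\}$, so $T_n\uparrow+\infty$ a.s. by non-explosion of $Y$. For $t<\tau^*$ the process $(X(t),Y(t))$ stays in $\bigcup_j\{e_j\}\times\mathcal{C}_j$, where each $V_j$ is $C^2$ (Theorem \ref{t:Regu}) and, by \eqref{e:VariIneq}, $rV_j-\mathcal{L}_jV_j-\sum_q a_{qj}V_q-f_j=0$. Applying It\^{o}'s formula to $e^{-rt}\sum_j V_j(Y(t))X_j(t)$ on $[0,\tau^*\wedge T_n]$ — this is exactly the expansion \eqref{e:DynIto} with $\varphi_j^k$ replaced by $V_j$ — the drift collapses on the continuation set to $-e^{-rt}f(X(t),Y(t))$, while the $dB$- and $dM_j$-integrals are genuine martingales there (bounded integrands on a bounded time horizon). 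Taking expectations gives
\[
V(e_i,y)=\mathbb{E}\Big[\int_0^{\tau^*\wedge T_n}e^{-rt}f(X(t),Y(t))\,dt+e^{-r(\tau^*\wedge T_n)}V(X(\tau^*\wedge T_n),Y(\tau^*\wedge T_n))\Big].
\]

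Next I let $n\to\infty$. The linear growth of the $V_j$'s and $f_j$'s, the Lipschitz bounds on $\alpha_j,\beta_j$, the usual $L^2$-moment estimates for $Y$, and the standing hypothesis that $r$ is large supply the uniform integrability needed to pass to the limit: the integral term tends to $\mathbb{E}[\int_0^{\tau^*}e^{-rt}f(X(t),Y(t))\,dt]$, on $\{\tau^*<\infty\}$ the boundary term tends to $e^{-r\tau^*}V(X(\tau^*),Y(\tau^*))$, and on $\{\tau^*=\infty\}$ it tends to $0$ (again by linear growth and $r$ large). Since $(X(\tau^*),Y(\tau^*))\in\Gamma$ on $\{\tau^*<\infty\}$, we have $V(X(\tau^*),Y(\tau^*))=g(X(\tau^*),Y(\tau^*))$ there; together with the convention $e^{-r\tau}g(X(\tau),Y(\tau))=0$ on $\{\tau=\infty\}$ this is precisely $V(e_i,y)=J^{\tau^*}(e_i,y)$, so $\tau^*$ attains the supremum in \eqref{e:genopt} and is therefore optimal.

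The point requiring the most care is justifying It\^{o}'s formula up to $\tau^*$, since $V_i$ is only $C^2$ on the \emph{open} set $\mathcal{C}_i$ and merely $C^1$ on $\partial\mathcal{C}_i$. The trajectory lies in the open $C^2$ region for every $t<\tau^*$, so the only exceptional instant is $\tau^*$ itself, a Lebesgue-null set of times; the $C^1$-fit of Theorem \ref{t:Regu} keeps $V_i'$ — hence the stochastic integral $\int e^{-rt}\beta_i(Y)V_i'(Y)\,dB$ — bounded up to the boundary, and the drift combination $rV_i-\mathcal{L}_iV_i-\sum_q a_{qi}V_q$ is continuous up to $\partial\mathcal{C}_i$ (equal to $f_i$ there), so no isolated second derivative has to survive in the limit. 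One can make this rigorous either by first localizing with $\sigma_\varepsilon:=\inf\{t>0:\mathrm{dist}((X(t),Y(t)),\Gamma)\le\varepsilon\}$, applying the classical It\^{o} formula on $[0,\tau^*\wedge T_n\wedge\sigma_\varepsilon]$ and letting $\varepsilon\downarrow0$ (using the $C^1$-fit for the convergence of the drift and the $dB$-integrand), or by invoking a generalized It\^{o} formula for $C^1$ functions with locally bounded distributional second derivative. The only remaining routine matter is the uniform integrability in the passage $n\to\infty$, especially on $\{\tau^*=\infty\}$, which is exactly what the assumption that $r$ is large enough provides.
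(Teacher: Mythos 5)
Your proposal is correct and follows essentially the same route as the paper, which simply defers to the verification argument in Pham (pp.~101--102): use Theorem \ref{t:Regu} to get $C^2$ regularity on the continuation region, apply It\^{o}'s formula to $e^{-rt}\sum_j V_j(Y(t))X_j(t)$ there (with localization to handle the boundary and a uniform-integrability passage using linear growth and $r$ large), and conclude $V=J^{\tau^*}$ since $V=g$ on the stopping set.
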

\begin{proof}
We refer to \cite[pp.~101--102]{Pham2009} for the proof.
\end{proof}

Theorem \ref{t:OptiStra} states some properties about $\mathcal{S}_i$'s.
\begin{thm}\label{t:OptiStra}
Assume that \eqref{i:H1} holds. Then the following conclusions are true.

\noindent$\mathrm{(1)}$ If $\bigcup\limits_{i=1}^m\mathcal{S}_i=\emptyset$, then $\widehat{V}_i\geq g_i$ for each $i=1,2,\cdots,m$.

\noindent$\mathrm{(2)}$ If $\widehat{V}_i\geq g_i$ for each $i=1,2,\cdots,m$, then $V_i=\widehat{V}_i$.

\noindent$\mathrm{(3)}$ If $g_i$ is $C^2$ continuous for some $i\in\{1,2,\cdots,m\}$, then $\mathcal{S}_i$ is included in $\mathcal{D}_i$.
\end{thm}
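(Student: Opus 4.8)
The plan is to handle the three assertions in turn, leaning throughout on three facts that are already available. First, $V_i\ge g_i$ on $\mathbb{R}$, read off directly from \eqref{e:VariIneq}. Second, $V_i\ge\widehat V_i$ on $\mathbb{R}$: the stopping time $\tau\equiv\infty$ is admissible and, since $\exp(-r\tau)g(X(\tau),Y(\tau))=0$ on $\{\tau=\infty\}$, one has $J^{\infty}(e_i,\cdot)=\widehat V_i$, so $V_i=\sup_\tau J^{\tau}(e_i,\cdot)\ge\widehat V_i$. Third, $r$ being large, $\widehat V_i$ is finite, of at most linear growth, is $C^2$ by the local ellipticity of $\mathcal{L}_i$, and solves the linear system $r\widehat V_i-\mathcal{L}_i\widehat V_i-\sum_{q=1}^m a_{qi}\widehat V_q-f_i=0$ on $\mathbb{R}$ — a Feynman--Kac identity obtainable from the dynamic programming principle \eqref{e:DynProPri} by taking $\tau=\infty$. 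With these in hand, I would first prove (1): if $\bigcup_i\mathcal{S}_i=\emptyset$, then $V_i\ne g_i$ everywhere, so $V_i>g_i$ on $\mathbb{R}$ and $\mathcal{C}_i=\mathbb{R}$ for every $i$; consequently the optimal stopping time $\tau^*=\inf\{t>0:(X(t),Y(t))\in\bigcup_i\{e_i\}\times\mathcal{S}_i\}$ furnished by the preceding theorem is identically $+\infty$, and by its optimality (the terminal term vanishing on $\{\tau^*=\infty\}$) $V_i(y)=J^{\tau^*}(e_i,y)=\widehat V_i(y)$; hence $\widehat V_i=V_i\ge g_i$. An alternative route not invoking the optimal-stopping-time theorem is to note that with $\mathcal{C}_i=\mathbb{R}$ the inequality \eqref{e:VariIneq} collapses, in the viscosity sense, to the linear system that $\widehat V_i$ solves classically, and then to invoke uniqueness of linearly-growing solutions of that linear system — which follows from the comparison argument behind Theorem \ref{t:Uniq1}, only the case ``$U_k(x_n)-g_k(x_n)>0$'' being relevant in the absence of an obstacle — to conclude $V_i=\widehat V_i$ under \eqref{i:H1}.

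Next I would prove (2). Since $V_i\ge\widehat V_i$ is already known, it remains to show $V_i\le\widehat V_i$. The quickest argument: under the hypothesis $\widehat V_i\ge g_i$ the $C^2$ functions $\widehat V_i$ satisfy $\min\{r\widehat V_i-\mathcal{L}_i\widehat V_i-\sum_q a_{qi}\widehat V_q-f_i,\ \widehat V_i-g_i\}=\min\{0,\widehat V_i-g_i\}=0$ on $\mathbb{R}$, so $(\widehat V_i)_{i=1}^m$ is a family of viscosity solutions of \eqref{e:VariIneq} of at most linear growth, and Theorem \ref{t:Uniq1} (via \eqref{i:H1}) gives $V_i=\widehat V_i$. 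A purely probabilistic variant is to fix $\tau\in\mathcal{T}$, split $\int_0^\tau=\int_0^\infty-\int_\tau^\infty$, apply the strong Markov property of $(X,Y)$ to write $\mathbb{E}[\int_\tau^\infty\exp(-rt)f(X(t),Y(t))\mathrm{d}t\mid\mathscr{F}_\tau]=\exp(-r\tau)\widehat V(X(\tau),Y(\tau))$ on $\{\tau<\infty\}$, and thereby obtain
\[
\widehat V_i(y)-J^{\tau}(e_i,y)=\mathbb{E}\big[\exp(-r\tau)\big(\widehat V(X(\tau),Y(\tau))-g(X(\tau),Y(\tau))\big)\mathbbm{1}_{\{\tau<\infty\}}\big]\ge 0,
\]
after which taking the supremum over $\tau$ yields $V_i\le\widehat V_i$.

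For (3), fix $i$ with $g_i\in C^2(\mathbb{R})$ and take $y_0\in\mathcal{S}_i$, so $V_i(y_0)=g_i(y_0)$. Because $V_i\ge g_i$ on $\mathbb{R}$, the function $V_i-g_i$ attains a global minimum at $y_0$, so $g_i$ is a $C^2$ test function touching $V_i$ from below at $y_0$, and the viscosity supersolution property of $V_i$ established in the proof of Theorem \ref{t:VisSol} (in the form \eqref{e:SupeSolu1}) gives $rV_i(y_0)-\mathcal{L}_ig_i(y_0)-\sum_q a_{qi}V_q(y_0)-f_i(y_0)\ge 0$. The remaining step is to swap the $V_q$'s for the $g_q$'s: $rV_i(y_0)=rg_i(y_0)$ since $V_i(y_0)=g_i(y_0)$, and $\sum_q a_{qi}(V_q(y_0)-g_q(y_0))\ge 0$ because the $q=i$ term vanishes while each $q\ne i$ term is $\ge 0$ ($a_{qi}\ge 0$ as an off-diagonal entry of the rate matrix, and $V_q\ge g_q$); hence
\[
rg_i(y_0)-\mathcal{L}_ig_i(y_0)-\sum_{q=1}^m a_{qi}g_q(y_0)-f_i(y_0)\ \ge\ rV_i(y_0)-\mathcal{L}_ig_i(y_0)-\sum_{q=1}^m a_{qi}V_q(y_0)-f_i(y_0)\ \ge\ 0,
\]
i.e. $y_0\in\mathcal{D}_i$, which proves $\mathcal{S}_i\subset\mathcal{D}_i$.

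The step I expect to be the main obstacle is the preparatory one: carefully verifying that $\widehat V_i$ is $C^2$, solves the linear system, and inherits at most linear growth once $r$ is large (equivalently, in the probabilistic variant of (2), the integrability bookkeeping that legitimizes the decomposition $\int_0^\tau=\int_0^\infty-\int_\tau^\infty$ and the conditional Feynman--Kac identity). Once that groundwork is in place, parts (1) and (2) are short, and part (3) reduces to the elementary sign-chase above using $a_{qi}\ge 0$ for $q\ne i$ and $V_q\ge g_q$.
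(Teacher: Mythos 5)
Your proposal is correct and follows essentially the same route as the paper, whose proof is only a pointer to Theorem \ref{t:Uniq1} together with the arguments of Pham's Lemmas 5.2.3 and 5.2.4: parts (1) and (2) via showing $\widehat V_i$ solves the linear system (hence, when $\widehat V_i\geq g_i$, the variational inequality) and invoking the comparison/uniqueness theorem, and part (3) via the viscosity supersolution property with $g_i$ as a test function. Your only genuinely added detail is the regime-switching sign-chase replacing $\sum_q a_{qi}V_q$ by $\sum_q a_{qi}g_q$ using $a_{qi}\geq 0$ for $q\neq i$ and $V_q\geq g_q$, which the paper leaves implicit and which is correct.
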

\begin{proof}
By virtue of Theorem \ref{t:Uniq1}, (1) and (2) are proved in a similar way to that of \cite[p.~102, Lemma 5.2.3]{Pham2009}; for the proof of (3), we apply Theorem \ref{t:Uniq1} and refer to \cite[p.~102, Lemma 5.2.4]{Pham2009}.
\end{proof}

Now we study the forms of $\mathcal{S}_i$'s. To do this, we make the following assumption.
\begin{enumerate}
\makeatletter
\renewcommand{\labelenumi}{(H\theenumi)}\renewcommand{\p@enumi}{H}
\makeatother
\setcounter{enumi}{+1}
\item\label{i:H2} The process $Y$ takes values in $(0,+\infty)$ and $|\alpha_i(y)|+|\beta_i(y)|\leq C|y|$ for some constant $C$, $i=1,2,\cdots,m$.
\end{enumerate}

We will use the fact that the matrix $rI_{m\times m}-(a_{ij})_{m\times m}$ is invertible. This follows from that $rI_{m\times m}-(a_{ij})_{m\times m}$ is
a strictly diagonally dominant matrix, since $r>0$ and $\sum\limits_{i=1}^ma_{ij}=0$ for all $j=1,2,\cdots,m$. Let $(b_{ij})_{m\times m}$ denote the
inverse of $rI_{m\times m}-(a_{ij})_{m\times m}$.

Since $V_i$'s are Lipschitz continuous, it is reasonable to set $V_i(0):=V_i(0^+)$.
\begin{lem}\label{l:ViscZero}
Assume that \eqref{i:H2} holds. Then
\[
V_i(0)=\max\{\sum\limits_{q=1}^mb_{qi}f_q(0),g_i(0)\},\quad i=1,2,\cdots,m.
\]
\end{lem}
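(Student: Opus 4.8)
The plan is to analyze the variational inequality \eqref{e:VariIneq} at the boundary point $y=0$ of the state space $(0,+\infty)$. Under hypothesis \eqref{i:H2}, the diffusion $Y$ has coefficients bounded by $C|y|$, so starting from $Y(0)=y$ small, $Y$ stays near $0$ for a long time; letting $y\to 0^+$, the point $0$ behaves like an absorbing/degenerate boundary. First I would observe that on the continuation region $\mathcal{C}_i$ near $0$ (if $0\in\overline{\mathcal{C}_i}$), the equation $rV_i-\mathcal{L}_iV_i-\sum_q a_{qi}V_q-f_i=0$ holds, and evaluating the limit as $y\to 0^+$ — using that $\mathcal{L}_i V_i(y)=\tfrac12\beta_i(y)^2 V_i''(y)+\alpha_i(y)V_i'(y)\to 0$ because $|\alpha_i(y)|+|\beta_i(y)|\le C|y|\to 0$ and $V_i\in C^2(\mathcal{C}_i)$ with $V_i'$ continuous on $\partial\mathcal{C}_i$ by Theorem \ref{t:Regu} — yields the linear system $rV_i(0)-\sum_q a_{qi}V_q(0)-f_i(0)=0$, i.e. $V(0)=(rI-(a_{ij}))^{-1}f(0)$ in vector form, giving $V_i(0)=\sum_q b_{qi}f_q(0)$. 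On the stopping region $\mathcal{S}_i$ near $0$, instead $V_i(0)=g_i(0)$.

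Next I would pin down which of the two values actually occurs, and show it is always the max. For the inequality $V_i(0)\ge g_i(0)$: this is immediate since $V_i\ge g_i$ everywhere (choosing $\tau=0$ in the definition of $V$). For $V_i(0)\ge \sum_q b_{qi}f_q(0)$: consider the candidate constant-in-$y$ solution $\widehat{w}_i:=\sum_q b_{qi}f_q(0)$ of the "never stop" problem at the boundary; alternatively, test the viscosity supersolution inequality \eqref{e:VariIneq} with a $C^2$ test function touching $V_i$ from below at points approaching $0$, pass to the limit using the degeneracy of the coefficients, and read off $rV_i(0)-\sum_q a_{qi}V_q(0)-f_i(0)\ge 0$; together with the analogous subsolution inequality giving $\le 0$ on $\mathcal{C}_i$ and the relation $V_i(0)-g_i(0)\ge 0$, the viscosity identity $\min\{\cdot,\cdot\}=0$ forces $V_i(0)=\max\{\sum_q b_{qi}f_q(0),\,g_i(0)\}$. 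The cleanest route is probably a direct probabilistic one: for small $y$, estimate $J^\tau(e_i,y)$ by splitting at the exit time of a small interval $(0,\rho)$ and the first regime-switch time, use \eqref{i:H2} to control the diffusion, and take $y\to 0^+$ to get $V_i(0)\ge\max\{\sum_q b_{qi}f_q(0),g_i(0)\}$, with the reverse inequality from the supersolution property and the linear system above.

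I expect the main obstacle to be the interchange of limits at the degenerate boundary: justifying that $\mathcal{L}_i V_i(y)\to 0$ as $y\to0^+$ requires knowing $V_i''$ does not blow up too fast near $0$, which is not automatic from the interior $C^2$-regularity of Theorem \ref{t:Regu} alone (that theorem gives $C^2$ on $\mathcal{C}_i$ and $C^1$ up to $\partial\mathcal{C}_i$, but $0$ may be a boundary point where second derivatives are only controlled through the equation itself). I would handle this by working with the viscosity formulation directly rather than the classical one: approximate $0$ by interior points $y_n\to 0^+$, use test functions, and exploit that the coefficient bound $|\alpha_i|+|\beta_i|\le C|y_n|\to 0$ kills the first- and second-order terms in the limit regardless of how the test-function derivatives behave, since $V_i$ is Lipschitz so the relevant gradient terms $p_n$ stay bounded and the $\beta_i(y_n)^2 X_n$ terms vanish. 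A secondary technical point is ensuring $0\in\overline{\mathcal{C}_i}$ or handling the case $0\in\mathrm{int}\,\mathcal{S}_i$ separately; in the latter case $V_i\equiv g_i$ near $0$ forces $g_i(0)\ge\sum_q b_{qi}f_q(0)$ via the supersolution inequality, again delivering the claimed maximum.
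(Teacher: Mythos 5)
Your proposal follows essentially the same route as the paper: pass to the limit $y\to 0^+$ in the viscosity formulation of the variational inequality, using (H\ref{i:H2}) to make the first- and second-order terms of $\mathcal{L}_i$ vanish, so that $V_i(0)$ satisfies the algebraic system $\min\{rV_i(0)-\sum_{q=1}^m a_{qi}V_q(0)-f_i(0),\,V_i(0)-g_i(0)\}=0$, from which the stated formula is read off. The additional care you take with semijets near the degenerate boundary and with the case $0\in\mathrm{int}\,\mathcal{S}_i$ elaborates on, but does not depart from, the paper's one-line argument.
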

\begin{proof}
By taking limits as $y\rightarrow 0^+$, Theorem \ref{t:VisSol} implies that $V_i(0)$ is a solution of
\[
\min\{rV_i(0)-\sum\limits_{q=1}^{m}a_{qi}V_q(0)-f_i(0), V_i(0)-g_i(0)\}=0.
\]
Consequently, $V_i(0)=\max\{\sum\limits_{q=1}^mb_{qi}f_q(0),g_i(0)\}.$ This completes the proof.
\end{proof}

\begin{thm}\label{t:OptiStraForm}
Assume that \eqref{i:H1} and \eqref{i:H2} hold, $\mathcal{S}_i$'s are nonempty connected sets, and $g_i$'s are $C^2$ continuous. Then the following are true.\\
$\mathrm{(1)}$ If $\mathcal{D}_i=[a_i,+\infty)$ for some positive constant $a_i$, $i=1,2,\cdots,m$, then $\mathcal{S}_i=[\inf\mathcal{S}_i,+\infty)$.\\
$\mathrm{(2)}$ If $g_i(0)>\sum\limits_{i=1}^mb_{qi}f_q(0)$ and $\mathcal{D}_i=(0,b_i]$ for some positive constant $b_i$, $i=1,2,\cdots,m$, then
$\mathcal{S}_i=(0,\sup\mathcal{S}_i]$.
\end{thm}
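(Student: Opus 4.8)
The plan is to reduce both conclusions to a single statement — that the stopping region has no component of the continuation set on its far side — and then to establish that statement by a comparison argument based on Theorems \ref{t:VisSol}, \ref{t:Regu} and \ref{t:Uniq1}.

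\emph{Reductions.} Since the $g_i$'s are $C^2$, Theorem \ref{t:OptiStra}(3) gives $\mathcal{S}_i\subseteq\mathcal{D}_i$; and since $V_i-g_i$ is continuous while $\mathcal{S}_i$ is nonempty and connected, $\mathcal{S}_i$ is an interval that is relatively closed in $(0,+\infty)$. In case $(1)$, $\mathcal{S}_i\subseteq\mathcal{D}_i=[a_i,+\infty)$, so $\mathcal{S}_i$ is a closed interval with $\inf\mathcal{S}_i\ge a_i>0$, and it suffices to show $\mathcal{S}_i$ is unbounded above, i.e. $\mathcal{S}_i=[\inf\mathcal{S}_i,+\infty)$. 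In case $(2)$, $\mathcal{S}_i\subseteq\mathcal{D}_i=(0,b_i]$ is bounded, and it suffices to show $\inf\mathcal{S}_i=0$, i.e. $\mathcal{S}_i=(0,\sup\mathcal{S}_i]$; for this I would first invoke Lemma \ref{l:ViscZero} with the hypothesis $g_i(0)>\sum_{q=1}^{m}b_{qi}f_q(0)$ to get $V_i(0)=g_i(0)$, hence $V_i-g_i\to0$ as $y\to0^+$.

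\emph{The contradiction.} Assume $\mathcal{S}_i$ fails the stated form. Then $\mathcal{C}_i=\{V_i>g_i\}$ has a component $\Gamma_i$ on the far side of $\mathcal{S}_i$: $\Gamma_i=(\sup\mathcal{S}_i,+\infty)$ in case $(1)$ and $\Gamma_i=(0,\inf\mathcal{S}_i)$ in case $(2)$; in both cases $\Gamma_i\subseteq\mathcal{D}_i$, since $\Gamma_i$ sits between $\mathcal{S}_i$ and the corresponding end of $\mathcal{D}_i$. On $\Gamma_i$, Theorem \ref{t:Regu} gives $V_i\in C^2$ with $rV_i-\mathcal{L}_iV_i-\sum_{q}a_{qi}V_q-f_i=0$, while $V_i>g_i$ and the $C^1$-fit at the finite endpoint of $\Gamma_i$ (Theorem \ref{t:Regu}) shows $h_i:=V_i-g_i\ge0$ has vanishing value and first derivative there (and, in case $(2)$, $h_i\to0$ at $0^+$ by the previous paragraph). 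Put $\phi_i:=rg_i-\mathcal{L}_ig_i-\sum_{q}a_{qi}g_q-f_i$, which is $\ge0$ on $\mathcal{D}_i\supseteq\Gamma_i$; subtracting the equations for $V_i$ and $g_i$ yields
\[
rh_i-\mathcal{L}_ih_i-\sum_{q=1}^{m}a_{qi}h_q=-\phi_i\le0\qquad\text{on }\Gamma_i ,
\]
and the analogous inequality for the other indices on their own far gaps. Thus $h=(h_1,\dots,h_m)$ is a nonnegative, at most linearly growing subsolution of the weakly coupled, cooperative elliptic system $\mathcal{M}h:=\bigl(rh_i-\mathcal{L}_ih_i-\sum_{q}a_{qi}h_q\bigr)_{i}=0$ — cooperative since the off-diagonal couplings $-a_{qi}$ ($q\ne i$) are nonpositive — carrying vanishing Cauchy data at the relevant endpoints.

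I would then show this forces $h_i\equiv0$ on $\Gamma_i$, contradicting $h_i>0$ there. The tool is a maximum-principle / comparison argument for $\mathcal{M}$, carried out for the full vector $h$, against the barrier $\psi_q:=C+b_qy^{\overline{\lambda}}$ supplied by \eqref{i:H1} — exactly the supersolution of the homogeneous system used in the proof of Theorem \ref{t:Uniq1} — which, being of super-linear growth, dominates $h$ near $+\infty$ and can be taken with a factor $\varepsilon\downarrow0$; in case $(2)$ one uses instead that $\Gamma_i$ is bounded with $h_i$ decaying at both ends. The hard part, which I expect to be the main obstacle, is exactly this step: the operators $\mathcal{L}_i$ differ from index to index, so no scalar maximum principle applies directly, and the coupling terms $\sum_{q\ne i}a_{qi}h_q\ge0$ work against a one-component comparison, so one has to run the comparison for the whole vector $h$ on the largest region on which all components have controlled boundary data, while simultaneously controlling the growth at $+\infty$ — which is precisely what \eqref{i:H1} (and, in case $(2)$, \eqref{i:H2}) is tailored for, and the same difficulty that was met in Theorem \ref{t:Uniq1}. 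The remaining ingredients — the reductions, the $C^1$-fit, and the elementary observation that a connected $\mathcal{S}_i\subseteq\mathcal{D}_i$ of the prescribed shape is a half-line once its far gap is ruled out — are routine.
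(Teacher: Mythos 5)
Your reductions (via Theorem \ref{t:OptiStra}(3), connectedness, and Lemma \ref{l:ViscZero}) are fine, but the proof has a genuine gap precisely at the step you yourself flag as the main obstacle: the vector maximum principle for $h=V-g$ is never carried out, and as formulated it meets a real obstruction rather than a routine adaptation of Theorem \ref{t:Uniq1}. The inequality $rh_q-\mathcal{L}_qh_q-\sum_p a_{pq}h_p\le 0$ is available only on index-dependent sets (essentially $\mathcal{S}_q\cup(\mathcal{C}_q\cap\mathcal{D}_q)$), so the comparison must run on a common domain contained in all of them; but at a candidate maximum $(k,\hat x)$ of $h_k-\varepsilon\psi_k$ the coupling term $\sum_{q\ne k}a_{qk}h_q(\hat x)$ has to be dominated by the maximum being contradicted, and for indices $q$ whose good region or stopping set does not reach $\hat x$ no such bound is available. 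Worse, at the finite endpoint of any common domain (e.g.\ $\max_q a_q$, or $\max_q\inf\mathcal{S}_q$, in case (1)) some components $h_q$ need not vanish, so the boundary data are uncontrolled — and requiring them to vanish is essentially the statement being proved, so the argument turns circular; shrinking the domain until the data are controlled may expel the very gap $\Gamma_i$ on which $h_i>0$, leaving nothing to contradict. Hypothesis \eqref{i:H1} supplies only the superlinear barrier at infinity; it does not address this domain/boundary problem, so invoking it and the analogy with Theorem \ref{t:Uniq1} (which works on all of $\mathbb{R}$, with one common domain and no boundary) does not close the gap.

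The paper argues in the opposite direction and avoids $h$ altogether. With $A_i:=[\inf\mathcal{S}_i,+\infty)$ in case (1) and $A_i:=(0,\sup\mathcal{S}_i]$ in case (2), Theorem \ref{t:OptiStra}(3) gives $A_i\subset\mathcal{D}_i$, so on $A_i$ one has $rg_i-\mathcal{L}_ig_i-\sum_q a_{qi}g_q-f_i\ge 0$ while the obstacle branch $g_i-g_i$ vanishes identically; thus $\{g_i\}$ is itself a solution of the variational inequality on $A_i$, of at most linear growth, agreeing with $V_i$ at the finite endpoint of $A_i$ by continuity of $V_i-g_i$ and, in case (2), at $0$ by Lemma \ref{l:ViscZero}. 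The paper then concludes $g_i=V_i$ on $A_i$ (i.e.\ $A_i\subset\mathcal{S}_i$) by Theorem \ref{t:Uniq1}, treating its adaptation to the domains $A_i$ with these matched boundary values as immediate. Note what this buys compared with your route: no smooth fit or interior $C^2$ regularity from Theorem \ref{t:Regu} is needed, and the two functions being compared agree on the boundary by construction instead of by the (unproved) vanishing of $h$. If you wish to salvage your plan, you essentially have to reproduce this comparison-with-$g$ argument on $A_i$; proving $h\equiv 0$ on the far gaps by a maximum principle for the coupled system is not a softer substitute.
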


\begin{proof}
Set $a_i^*:=\inf\mathcal{S}_i$ in Case (1) and $b_i^*:=\sup\mathcal{S}_i$ in Case (2). Then we define $A_i:=[a_i^*,+\infty)$ in Case (1) and $A_i:=(0,b_i^*]$ in Case (2). We will prove $\mathcal{S}_i=A_i$.

Note the following facts,
\begin{enumerate}[(i)]
\item $g_i$'s grow at most linearly;
\item $g_i(a_i^*)=V_i(a_i^*)$ or $g_i(b_i^*)=V_i(b_i^*)$ by the continuity of $g_i$ and $V_i$;
\item in light of Lemma \ref{l:ViscZero}, $g_i(0)=V_i(0)$ if $g_i(0)>\sum\limits_{i=1}^mb_{qi}f_q(0)$ and $\mathcal{D}_i=(0,b_i]$.
\end{enumerate}
Moreover, thanks to $A_i\subset\mathcal{D}_i$ by Theorem \ref{t:OptiStra}, $g_i$ solves the inequality
\[
rg_i-\mathcal{L}_ig_i-\sum\limits_{q=1}^{m}a_{qi}g_q-f_i\geq0\;\,\text{on $A_i$.}
\]

Thus, by Theorem \ref{t:Uniq1}, it follows that $g_i=V_i$ on $A_i$. The proof is complete.
\end{proof}

\section{An Application}\label{model}
A firm is extracting a kind of natural resource (oil, gas, etc.). We will determine what time is optimal to stop the extraction.

Let $(\Omega, \mathscr{F}, \{\mathscr{F}_t\}_{t\geq
0}, \mathbb{P})$ be a filtered probability space. We assume that $\{\mathscr{F}_t\}_{t\geq 0}$ satisfies the usual conditions and $\mathscr{F}_0$ is the
completion of $\{\emptyset, \Omega\}$.

Let $X:=(X(t), t\geq 0)$ be a time homogeneous Markov chain defined on $(\Omega, \mathscr{F}, \{\mathscr{F}_t\}_{t\geq
0}, \mathbb{P})$ taking values in the standard orthogonal basis of $\mathbb{R}^2$,  $I:=\{e_1, e_2\}$, with a rate matrix $A:=\left[
\begin{array}{cc}
    -\lambda_1 & \lambda_2  \\
    \lambda_1 & -\lambda_2 \\
    \end{array}
\right] $ for some positive constants $\lambda_1, \lambda_2$.

Let $B(t)$ be a one dimensional standard Brownian motion, which is independent of $X$, defined on $(\Omega, \mathscr{F}, \{\mathscr{F}_t\}_{t\geq 0},
\mathbb{P})$.

We assume that the price process $P$ of the resource satisfies
\begin{equation}\label{e:price1}
\text{$\mathrm{d}P(t)=\mu(X(t)) P(t)\mathrm{d}t+\sigma(X(t)) P(t)\mathrm{d}B(t)$ and $P(0)>0$,}
\end{equation}
where $\mu(e_1):=\mu_1$, $\mu(e_2):=\mu_2$, $\sigma(e_1):=\sigma_1>0$ and $\sigma(e_2):=\sigma_2>0$. We will assume that $\mu_1\leq \mu_2$.

Applying It\^{o}'s formula, we deduce that the solution of Equation (\ref{e:price1}) is
\begin{equation}\label{e:price2}
P(t)=P(0)\exp\left[\int_0^t\left(\mu(X(s))-\frac{1}{2}\sigma(X(s))^2\right)\mathrm{d}s+\int_0^t\sigma(X(s))\mathrm{d}B(s)\right].
\end{equation}

To answer the question--- what time is optimal to stop the extraction, we will solve the following optimal
problem,
\begin{equation}\label{e:optimal1}
\begin{aligned}
J_i(p):=\sup\limits_{\tau\in\mathcal{T}}\mathbb{E}&\left[\left.\int_0^{\tau}\exp(-rt)(P(t)-C)\mathrm{d}t\right.\right.\\
&\left.\left.-\exp(-r\tau)K\right|X(0)=e_i,P(0)=p\right],
\end{aligned}
\end{equation}
where $\mathcal{T}$ is the collection of all stopping times, $r$ is the discount rate, $C$ is the running cost rate with $C>0$, and $K$ is the cost at which the firm stops the extraction.

\begin{lem}[{\cite[Lemma 1]{GuoZhang2005}}]\label{l:Expec}
we have
\[
\mathbb{E}\left[\left.\exp\left(\int_0^t\mu(X(s))\mathrm{d}s\right)\right|X(0)=e_i\right]=\frac{x_2-\mu_i}{x_2-x_1}\exp(x_1t)+\frac{\mu_i-x_1}{x_2-x_1}\exp(x_2t),
\]
where $x_1$ and $x_2$ are the solutions of
\begin{equation*}
x^2+(\lambda_1-\mu_1+\lambda_2-\mu_2)x+(\lambda_1-\mu_1)(\lambda_2-\mu_2)-\lambda_1\lambda_2=0
\end{equation*}
with $x_1<x_2$.
\end{lem}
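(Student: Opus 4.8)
The plan is to reduce the computation to a linear system of ordinary differential equations obtained from the backward (Feynman--Kac) equations for the two-state chain, and then to read off the explicit formula from the initial data. Write $\mu_i:=\mu(e_i)$ and set
\[
u_i(t):=\mathbb{E}\left[\left.\exp\left(\int_0^t\mu(X(s))\mathrm{d}s\right)\right|X(0)=e_i\right],\quad i=1,2.
\]
Since $\mu$ takes only the two values $\mu_1,\mu_2$, the integrand is bounded by $\exp((\mu_1\vee\mu_2)t)$, so each $u_i$ is finite, and (as the derivation will show) continuously differentiable in $t$ with $u_i(0)=1$.

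First I would derive the governing system. Using the Markov property to split $\int_0^{t+h}$ at time $h$ and conditioning on $X(h)$ gives $u_i(t+h)=\mathbb{E}[\exp(\int_0^h\mu(X(s))\mathrm{d}s)\,u_{X(h)}(t)\mid X(0)=e_i]$; expanding to first order using $\mathbb{P}(X(h)=e_j\mid X(0)=e_i)=\delta_{ij}+a_{ji}h+o(h)$ and $\int_0^h\mu(X(s))\mathrm{d}s=\mu_i h+o(h)$, and letting $h\to0^+$, yields
\[
u'(t)=\left(A^{\top}+\mathrm{diag}(\mu_1,\mu_2)\right)u(t),\qquad u(0)=(1,1)^{\top},
\]
where $u=(u_1,u_2)^{\top}$ and $A^{\top}$ is the generator of $X$ (recall from $\mathrm{d}X=AX\,\mathrm{d}t+\mathrm{d}M$ that the drift of $\phi(X)$ at $e_i$ is $\sum_j a_{ji}\phi_j$). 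Written out, this is $u_1'=(\mu_1-\lambda_1)u_1+\lambda_1u_2$ and $u_2'=\lambda_2u_1+(\mu_2-\lambda_2)u_2$.

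Set $B:=A^{\top}+\mathrm{diag}(\mu_1,\mu_2)$. A direct computation gives $\det(B-xI)=x^2+(\lambda_1-\mu_1+\lambda_2-\mu_2)x+(\lambda_1-\mu_1)(\lambda_2-\mu_2)-\lambda_1\lambda_2$, which is exactly the quadratic in the statement; its discriminant equals $(\mu_1-\lambda_1-\mu_2+\lambda_2)^2+4\lambda_1\lambda_2>0$, so the roots $x_1<x_2$ are real and distinct. Since each component of a constant-coefficient linear system satisfies the scalar equation with the same characteristic roots, $u_i''-(x_1+x_2)u_i'+x_1x_2u_i=0$, so $u_i(t)=\alpha_ie^{x_1t}+\gamma_ie^{x_2t}$ for constants fixed by $u_i(0)$ and $u_i'(0)$. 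The key simplification is that the columns of $A$ sum to zero, i.e. $A^{\top}\mathbf{1}=0$, whence $B\mathbf{1}=(\mu_1,\mu_2)^{\top}$ and therefore $u_i'(0)=(Bu(0))_i=\mu_i$. Solving $\alpha_i+\gamma_i=1$ and $x_1\alpha_i+x_2\gamma_i=\mu_i$ gives $\alpha_i=\frac{x_2-\mu_i}{x_2-x_1}$ and $\gamma_i=\frac{\mu_i-x_1}{x_2-x_1}$, which is precisely the claimed formula.

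The main obstacle is the rigorous justification of the ODE derivation, namely the $o(h)$ estimates and the interchange of limit and expectation that turn the infinitesimal expansion into the differential equation. The cleanest way to make this airtight is to bypass the heuristic entirely by conditioning on the first jump time $S$, which from state $e_i$ is exponential with parameter $\lambda_i$: this produces the renewal-type integral equation $u_1(t)=e^{(\mu_1-\lambda_1)t}+\int_0^t\lambda_1e^{(\mu_1-\lambda_1)s}u_2(t-s)\,\mathrm{d}s$ (and symmetrically for $u_2$), which exhibits $u_i\in C^1$ and, upon differentiation, reproduces the system above; alternatively one applies Dynkin's formula to the finite-state chain. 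Once the linear system is established, the remaining steps are elementary, so this is the one step requiring care.
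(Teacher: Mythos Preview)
Your argument is correct. The derivation of the linear system $u'=(A^{\top}+\mathrm{diag}(\mu_1,\mu_2))u$, the identification of the characteristic polynomial with the quadratic in the statement, the positivity of the discriminant, and the determination of the coefficients from $u_i(0)=1$, $u_i'(0)=\mu_i$ are all valid; your remark that the heuristic $o(h)$ expansion can be made rigorous via conditioning on the first jump (or Dynkin's formula) adequately closes the only delicate point.

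As for comparison: the paper gives no proof of this lemma at all. It is stated with the attribution \cite[Lemma~1]{GuoZhang2005} and used as a black box in the proofs of Theorem~\ref{t:optimal1} and Lemma~\ref{l:Conn}. So you have supplied a self-contained argument where the paper simply imports the result. Your ODE/Feynman--Kac approach is in fact the standard one and is essentially what underlies the cited reference; there is nothing to contrast methodologically.
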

\begin{rem}
If $\mu_1<\mu_2$, then $x_1<\mu_1<x_2<\mu_2$; if $\mu_1=\mu_2$, then $x_1=\mu_1-\lambda_1-\lambda_2$ and $x_2=\mu_1$.
\end{rem}

\begin{thm}\label{t:optimal1}
Assume $r\leq x_2$. Then the optimal stopping time $\tau^*$ is given by $\tau^*=+\infty$ a.s., and the function $J_i$ in \eqref{e:optimal1} is given by
$J_i(p)=+\infty$, $i=1,2$.
\end{thm}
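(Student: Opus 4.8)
The plan is to verify directly that the single, degenerate, stopping time $\tau^*\equiv+\infty$ already produces reward $+\infty$ in \eqref{e:optimal1}; since no admissible stopping time can exceed $+\infty$, this makes $\tau^*$ optimal and forces $J_i\equiv+\infty$. (Throughout, $r>0$, as is natural for a discount rate.) With $\tau=\tau^*$ the lump-sum term $\exp(-r\tau^*)K$ equals $0$, so it suffices to prove
\[
\mathbb{E}\!\left[\left.\int_0^\infty\exp(-rt)\bigl(P(t)-C\bigr)\,\mathrm{d}t\,\right|X(0)=e_i,\,P(0)=p\right]=+\infty .
\]

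The first step is to identify the conditional mean of the price. Conditioning on the whole path of the chain $X$ and using the independence of $B$ and $X$, the integrand $\sigma(X(\cdot))$ in \eqref{e:price2} is deterministic, so $\int_0^t\sigma(X(s))\,\mathrm{d}B(s)$ is conditionally centred Gaussian with variance $\int_0^t\sigma(X(s))^2\,\mathrm{d}s$, and one half of this variance exactly cancels the It\^{o} correction term in \eqref{e:price2}. Hence
\[
\mathbb{E}\!\left[\left.P(t)\,\right|X(0)=e_i,\,P(0)=p\right]=p\,\mathbb{E}\!\left[\left.\exp\!\left(\int_0^t\mu(X(s))\,\mathrm{d}s\right)\right|X(0)=e_i\right].
\]
By Lemma \ref{l:Expec} the right-hand side equals $p\bigl(\frac{x_2-\mu_i}{x_2-x_1}\exp(x_1t)+\frac{\mu_i-x_1}{x_2-x_1}\exp(x_2t)\bigr)$; since $x_1<x_2$ and, by the root ordering recorded in the remark following Lemma \ref{l:Expec}, $x_1<\mu_i$, the coefficient of $\exp(x_2t)$ is \emph{strictly} positive. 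Consequently there are $c_i>0$ and $t_0\geq0$ with $\mathbb{E}[P(t)\mid X(0)=e_i,\,P(0)=p]\geq c_i\,p\exp(x_2t)$ for all $t\geq t_0$.

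To conclude, the integrand $\exp(-rt)P(t)$ is nonnegative, so Tonelli's theorem permits exchanging the time integral with the expectation, and then using the previous lower bound together with the hypothesis $r\leq x_2$ we get
\[
\mathbb{E}\!\left[\left.\int_0^\infty\exp(-rt)P(t)\,\mathrm{d}t\,\right|X(0)=e_i,\,P(0)=p\right]\geq c_i\,p\int_{t_0}^\infty\exp((x_2-r)t)\,\mathrm{d}t=+\infty .
\]
The running-cost part contributes only $-C\int_0^\infty\exp(-rt)\,\mathrm{d}t=-C/r$, which is finite, so there is no $\infty-\infty$ ambiguity and the displayed claim of the first paragraph holds. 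Therefore $J_i(p)=+\infty$ for $i=1,2$ and every $p>0$, and $\tau^*\equiv+\infty$ attains this value and is thus optimal.

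The proof is short and presents no genuine difficulty; the points that deserve a little care are the justification of the Tonelli interchange (immediate from nonnegativity of $\exp(-rt)P(t)$), the \emph{strict} positivity of the leading exponential coefficient in Lemma \ref{l:Expec} — which is exactly where the inequality $x_1<\mu_i$ from the post-lemma remark is used — and the bookkeeping that keeps the finite running-cost term from turning the estimate into an indeterminate form.
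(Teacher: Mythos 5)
Your proof is correct and follows essentially the same route as the paper: both reduce the claim to the expected discounted price via the martingale property of the stochastic exponential and the independence of $X$ and $B$, invoke Lemma \ref{l:Expec}, and conclude divergence from $r\leq x_2$ (with $x_1<\mu_i$ guaranteeing a strictly positive coefficient of $\exp(x_2t)$). The only cosmetic difference is that the paper lower-bounds $J_i(p)$ by the payoff of the deterministic time $\tau=T$ and lets $T\to+\infty$, whereas you evaluate the payoff of $\tau\equiv+\infty$ directly via Tonelli, which also makes the optimality of $\tau^*=+\infty$ slightly more explicit.
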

\begin{proof}
By \eqref{e:price2} and (\ref{e:optimal1}), we have, for fixed $T>0$,
\[
\begin{aligned}
J_i(p)
\geq &p\int_0^T\mathbb{E}\left[\exp(-rt)\left.\exp\left(\int_0^t\mu(X(s))-\frac{1}{2}\sigma(X(s))^2\mathrm{d}s\right.\right.\right.\\
&\qquad\qquad\left.\left.\left.+\int_0^t\sigma(X(s))\mathrm{d}B(s)\right)\right|X(0)=e_i\right]\mathrm{d}t\\
&-r^{-1}(1-\exp(-rT))C-\exp(-rT)K.
\end{aligned}
\]

Note that
\[
\left(\exp\left(\int_0^t\sigma(X(s))\mathrm{d}B(s)-\frac{1}{2}\int_0^t\sigma(X(s))^2\mathrm{d}s)\right),t\geq 0\right)
\]
is a martingale. Then in light of the independence of $X$ and $B$, we get
\[
\begin{aligned}
J_i(p)&\geq p\int_0^T\exp(-rt)\mathbb{E}\left[\left.\exp\left(\int_0^t\mu(X(s))\right)\mathrm{d}s\right|X(0)=e_i\right]\mathrm{d}t\\
&\qquad-r^{-1}(1-\exp(-rT))C-\exp(-rT)K\\
&=p\left(\frac{x_2-\mu_i}{x_2-x_1}\int_0^T\exp((x_1-r)t)\mathrm{d}t+\frac{\mu_i-x_1}{x_2-x_1}\int_0^T\exp((x_2-r)t)\mathrm{d}t\right)\\
&\qquad-r^{-1}(1-\exp(-rT))C-\exp(-rT)K,
\end{aligned}
\]
where we have used Lemma \ref{l:Expec}.

Sending $T\rightarrow+\infty$ in the above inequality, we obtain $J_i(p)=+\infty$ since $r\leq x_2$.
\end{proof}

\begin{cor}
If $r\leq \mu_1$, then the conclusions in Theorem \ref{t:optimal1} hold.
\end{cor}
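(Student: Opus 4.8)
The plan is to reduce the statement directly to Theorem~\ref{t:optimal1}: I claim the hypothesis $r\leq\mu_1$ already implies the hypothesis $r\leq x_2$ of that theorem, after which the two asserted conclusions ($\tau^*=+\infty$ a.s.\ and $J_i(p)=+\infty$ for $i=1,2$) transfer verbatim. So the entire content of the corollary is the inequality $\mu_1\leq x_2$.

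To see $\mu_1\leq x_2$, I would evaluate the defining quadratic
\[
q(x):=x^2+(\lambda_1-\mu_1+\lambda_2-\mu_2)x+(\lambda_1-\mu_1)(\lambda_2-\mu_2)-\lambda_1\lambda_2
\]
at $x=\mu_1$. A short expansion collapses almost every term and leaves $q(\mu_1)=\lambda_1(\mu_1-\mu_2)$, which is $\leq 0$ because $\mu_1\leq\mu_2$ is a standing assumption of the section and $\lambda_1>0$. Since $q$ is an upward-opening parabola whose two roots are $x_1<x_2$ (genuinely distinct, since in the degenerate case $\mu_1=\mu_2$ one has $x_1=\mu_1-\lambda_1-\lambda_2<\mu_1=x_2$), the sign condition $q(\mu_1)\leq0$ forces $x_1\leq\mu_1\leq x_2$. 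Alternatively, this is recorded outright in the Remark following Lemma~\ref{l:Expec}, which states $x_1<\mu_1<x_2$ when $\mu_1<\mu_2$ and $x_2=\mu_1$ when $\mu_1=\mu_2$; either way $\mu_1\leq x_2$.

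Putting the pieces together, $r\leq\mu_1\leq x_2$, so Theorem~\ref{t:optimal1} applies and yields exactly the asserted conclusions. I do not anticipate any real obstacle here; the only computation is the one-line identity $q(\mu_1)=\lambda_1(\mu_1-\mu_2)$, and even that can be skipped by quoting the Remark.
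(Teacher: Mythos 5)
Your proposal is correct and follows exactly the paper's route: reduce to Theorem \ref{t:optimal1} via the implication $r\leq\mu_1\Rightarrow r\leq x_2$, which the paper simply asserts (and which its Remark after Lemma \ref{l:Expec} records). Your extra verification $q(\mu_1)=\lambda_1(\mu_1-\mu_2)\leq 0$ is a valid and welcome justification of that assertion, but it is the same argument in substance.
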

\begin{proof}
The proof can be completed by the fact that $r\leq \mu_1$ implies $r\leq x_2$.
\end{proof}

In the following discussion, we consider the case $r>x_2$.
\begin{lem}\label{l:Soluz}
$\mathrm{(1)}$ {\rm\cite[Remark 2.1]{Guo2001}} The following equation
\begin{equation*}
w_1(z)w_2(z)-\lambda_1\lambda_2=0,
\end{equation*}
where $w_i(z):=r+\lambda_i-\mu_iz-\frac{\sigma_i^2}{2}z(z-1)$, has four distinct solutions $z_i$'s satisfying $z_1<z_2<0<z_3<z_4$ with $w_1(z_3)>0$.

\noindent $\mathrm{(2)}$ Assume that $r>x_2$. Then $z_3>1$ and there are positive constants $b_i$'s fulfilling
$b_1w_1(z_3)-\lambda_1b_2=0$ and $b_2w_2(z_3)-\lambda_2b_1=0$. See Figure \ref{f:Soluz}.
\end{lem}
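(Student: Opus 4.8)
The plan is to treat the two parts separately, relying on elementary analysis of the quartic-type functions $w_i$. For part (1), fix $i\in\{1,2\}$ and observe that $w_i(z)=r+\lambda_i-\mu_i z-\tfrac{\sigma_i^2}{2}z(z-1)$ is a downward-opening parabola in $z$ (since $\sigma_i^2>0$), with $w_i(0)=r+\lambda_i>0$ and $w_i(z)\to-\infty$ as $z\to\pm\infty$; hence each $w_i$ has exactly one negative root and one positive root, and $w_i>0$ strictly between them. Now consider $\Psi(z):=w_1(z)w_2(z)-\lambda_1\lambda_2$. Since $\Psi$ is a degree-four polynomial with negative leading coefficient, $\Psi(z)\to-\infty$ as $z\to\pm\infty$, while $\Psi(0)=w_1(0)w_2(0)-\lambda_1\lambda_2=(r+\lambda_1)(r+\lambda_2)-\lambda_1\lambda_2=r^2+r(\lambda_1+\lambda_2)>0$. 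At the positive root $\zeta_1$ of $w_1$ we have $\Psi(\zeta_1)=-\lambda_1\lambda_2<0$, and similarly $\Psi$ is negative at the positive root of $w_2$ and at both negative roots of the $w_i$; combining these sign changes with $\Psi(0)>0$ and the behavior at $\pm\infty$ produces four distinct real roots $z_1<z_2<0<z_3<z_4$ by the intermediate value theorem, with $z_3$ lying strictly below the smaller of the two positive roots of $w_1,w_2$; in particular $w_1(z_3)>0$ (and $w_2(z_3)>0$). One should double-check that these are \emph{all} the roots and that they are simple, which follows because a quartic has at most four roots and the strict sign alternation already forces four of them.

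For part (2), assume $r>x_2$, where $x_2$ is the larger root of $x^2+(\lambda_1-\mu_1+\lambda_2-\mu_2)x+(\lambda_1-\mu_1)(\lambda_2-\mu_2)-\lambda_1\lambda_2=0$ from Lemma \ref{l:Expec}. The key observation is that this quadratic is exactly $\Psi$ evaluated at $z=1$: a direct substitution gives $w_i(1)=r+\lambda_i-\mu_i$, so $\Psi(1)=(r+\lambda_1-\mu_1)(r+\lambda_2-\mu_2)-\lambda_1\lambda_2$, which is precisely the quadratic in $r$ whose roots (in $r$) correspond to $x_1,x_2$ shifted appropriately; equivalently, $\Psi(1)>0$ exactly when $r$ exceeds $x_2$ (using $\mu_1\le\mu_2$ and the root-location remark for $x_1,x_2$). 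Since $\Psi(1)>0$ and $\Psi(z_3^-)\le 0$ just to the left of $z_3$ — more precisely $\Psi$ changes from positive to negative as $z$ increases through $z_3$... wait, one must be careful: $\Psi>0$ on $(z_2,z_3)$ and $\Psi<0$ on $(z_3,z_4)$, and $\Psi(1)>0$ forces $1\in(z_2,z_3)$, hence $z_3>1$. Then from $\Psi(z_3)=0$ we get $w_1(z_3)w_2(z_3)=\lambda_1\lambda_2>0$, and since $w_1(z_3)>0$ also $w_2(z_3)>0$; setting $b_1:=\lambda_1/w_1(z_3)>0$ and $b_2:=1$ (or any positive scaling) we may solve $b_1 w_1(z_3)=\lambda_1 b_2$ and $b_2 w_2(z_3)=\lambda_2 b_1$ simultaneously, since the two equations are consistent precisely because $w_1(z_3)w_2(z_3)=\lambda_1\lambda_2$. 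The positivity of $b_1,b_2$ is then immediate.

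The main obstacle I anticipate is bookkeeping rather than depth: one must carefully verify the sign of $\Psi$ on each of the intervals cut out by the four roots of $w_1 w_2$ (not just the existence of four roots), and then correctly identify $\Psi(1)$ with the quadratic defining $x_1,x_2$ so that the hypothesis $r>x_2$ translates cleanly into $\Psi(1)>0$ and hence $z_3>1$. The identity $\Psi(1)=(r+\lambda_1-\mu_1)(r+\lambda_2-\mu_2)-\lambda_1\lambda_2$ and its relation to the $x_i$-quadratic is the crux; once that is pinned down, the rest is routine. (The cited references \cite{Guo2001} for part (1) and Figure \ref{f:Soluz} for the picture of the root configuration should streamline the write-up considerably.)
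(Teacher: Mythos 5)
Your part (2) has a genuine gap at exactly the point where the paper works hardest. You correctly identify $\Psi(1)=(r+\lambda_1-\mu_1)(r+\lambda_2-\mu_2)-\lambda_1\lambda_2$ with the quadratic defining $x_1,x_2$ evaluated at $r$, so $r>x_2$ gives $\Psi(1)>0$; but $\Psi(1)>0$ does \emph{not} force $1\in(z_2,z_3)$: the quartic $\Psi$ has positive leading coefficient, so $\Psi>0$ on $(z_4,+\infty)$ as well, and you must separately exclude $1>z_4$. The paper devotes most of its proof to precisely this: from $r>x_2$ it deduces $2r+\lambda_1-\mu_1+\lambda_2-\mu_2>0$ and $r+\lambda_i-\mu_i>0$, then argues in three cases according to the signs of $\mu_i+\sigma_i^2/2$, showing $Q'(1)<0$, $Q'''(1)<0$, or $Q''(1)<0$, each of which together with $Q(1)>0$ places $1$ to the left of $z_3$. (A shorter repair in your spirit: $r+\lambda_1-\mu_1>0$ means $w_1(1)>0$, so $1$ lies strictly between the two roots of $w_1$, while the intermediate value construction puts $z_4$ strictly to the right of the larger root of $w_1$; hence $1<z_4$, and then $\Psi(1)>0$ does give $1<z_3$. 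But some such step must be supplied — as written, your conclusion is a non sequitur.) Your construction of $b_1,b_2$ from $w_1(z_3)w_2(z_3)=\lambda_1\lambda_2$ and $w_1(z_3)>0$ is correct and matches the paper up to scaling.

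Part (1) also contains a sign error that breaks the root count as written: $\Psi=w_1w_2-\lambda_1\lambda_2$ is the product of two downward parabolas minus a constant, so its leading coefficient is $\tfrac{1}{4}\sigma_1^2\sigma_2^2>0$ and $\Psi\to+\infty$ as $z\to\pm\infty$ (this is what the paper uses), not $-\infty$ as you assert. With your claimed behavior at $\pm\infty$, the sign data ($\Psi<0$ at the roots of the $w_i$, $\Psi(0)>0$) yield only the two inner roots $z_2,z_3$; the outer roots $z_1,z_4$ come precisely from $\Psi\to+\infty$. Your later sign pattern ($\Psi>0$ on $(z_2,z_3)$, $\Psi<0$ on $(z_3,z_4)$) tacitly assumes the correct positive leading coefficient, so the argument is internally inconsistent; the fix is trivial, and once made your part (1) coincides with the paper's, which needs only $Q<0$ at the two roots of $w_1$, $Q(0)>0$, and $Q\to+\infty$.
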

\begin{figure}[!h]
\centering
\includegraphics[height=6cm]{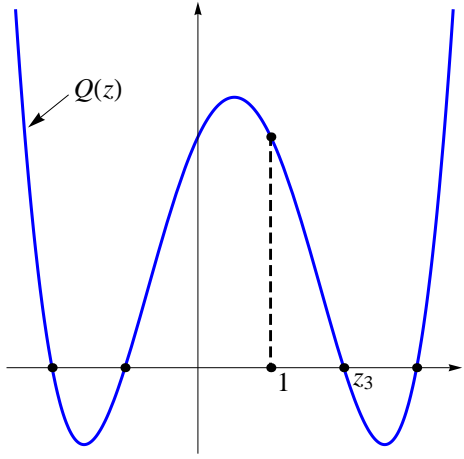}
\caption{Solutions of $w_1(z)w_2(z)-\lambda_1\lambda_2=0$.}
\label{f:Soluz}
\end{figure}
\begin{proof}
1. Set $Q(z):=w_1(z)w_2(z)-\lambda_1\lambda_2$. Note that the equation $w_1(z)=0$ has two solutions, say, $z^1$ and $z^2$ with $z^1<z^2$. Then $Q(z^i)<0$,
where $i=1,2$. In addition, we have $Q(0)>0$ and $\lim\limits_{z\rightarrow\infty}Q(z)=+\infty$. Therefore, by the intermediate value theorem, it follows
that the equation $w_1(z)w_2(z)-\lambda_1\lambda_2=0$ has four distinct solutions $z_i$'s satisfying $z_1<z_2<0<z_3<z_4$. Furthermore, $w_1(z_3)>0$, since $z^1<z_3<z^2$.

\noindent 2. Since $r>x_2$, we have
\begin{equation}\label{e:MarkEqua1}
r^2+(\lambda_1-\mu_1+\lambda_2-\mu_2)r+(\lambda_1-\mu_1)(\lambda_2-\mu_2)-\lambda_1\lambda_2>0
\end{equation}
and
\begin{equation}\label{e:MarkEqua2}
2r+\lambda_1-\mu_1+\lambda_2-\mu_2>0.
\end{equation}

Then it follows from (\ref{e:MarkEqua1}) that $Q(1)>0$ and $(r+\lambda_1-\mu_1)(r+\lambda_2-\mu_2)>0$. Combining (\ref{e:MarkEqua2}) and
$(r+\lambda_1-\mu_1)(r+\lambda_2-\mu_2)>0$, we have $r+\lambda_1-\mu_1>0$ and $r+\lambda_2-\mu_2>0$.

\noindent 3. In this step, we prove $z_3>1$.

\texttt{Case 1} $\mu_i+\sigma_i^2/2>0$, where $i=1,2$.

In this case, we have
\[
Q'(1)=-(r+\lambda_1-\mu_1)(\mu_2+\sigma_2^2/2)-(r+\lambda_2-\mu_2)(\mu_1+\sigma_1^2/2)<0.
\]
This and $Q(1)>0$ imply $z_3>1$.

\texttt{Case 2} $\mu_i+\sigma_i^2/2<0$, where $i=1,2$.

In this case, we have
\[
Q'''(1)=3\sigma_1^2\sigma_2^2+3(\sigma_1^2\mu_2+\sigma_2^2\mu_1)<0.
\]
This and $Q(1)>0$ imply $z_3>1$.

\texttt{Case 3} $(\mu_1+\sigma_1^2/2)(\mu_2+\sigma_2^2/2)\leq0$.

In this case, we have
\[
Q''(1)=-\sigma_1^2(r+\lambda_2-\mu_2)+2(\mu_1+\frac{\sigma_1^2}{2})(\mu_2+\frac{\sigma_2^2}{2})-\sigma_1^2(r+\lambda_2-\mu_2)<0.
\]
This and $Q(1)>0$ imply $z_3>1$.

\noindent 4. Take $b_1=1$ and $b_2=w_1(z_3)/\lambda_1$. Then we have $b_1w_1(z_3)-\lambda_1b_2=0$ and $b_2w_2(z_3)-\lambda_2b_1=0$.
\end{proof}

\begin{lem}\label{l:Conn}
Assume that $r>x_2$. Then $J_i$ is a Lipschitz continuous function, $i=1,2$. In addition, $\mathcal{S}_i:=\{p:p\in(0,+\infty)\;\mathrm{and}\;J_i(p)=-K\}$ is
connected, $i=1,2$.
\end{lem}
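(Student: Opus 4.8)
The plan is to exploit how explicit this problem is: it is the optimal stopping problem of Section~\ref{optstop} with $Y=P$, $\alpha_i(p)=\mu_i p$, $\beta_i(p)=\sigma_i p$, $f_i(p)=p-C$ and $g_i\equiv -K$, so that $J_i$ is its value function and $\mathcal{S}_i=\{p>0:J_i(p)=g_i(p)\}$. By the closed form \eqref{e:price2}, the price started at $p$ factors as $P(t)=p\,Z(t)$ with
\[
Z(t):=\exp\!\left(\int_0^t\Big(\mu(X(s))-\frac{1}{2}\sigma(X(s))^2\Big)\mathrm{d}s+\int_0^t\sigma(X(s))\,\mathrm{d}B(s)\right),
\]
a process not depending on $p$. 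Therefore, for each fixed $\tau\in\mathcal{T}$,
\[
J^{\tau}_i(p)=p\cdot s_i(\tau)-C\,\mathbb{E}\!\left[\int_0^{\tau}e^{-rt}\,\mathrm{d}t\right]-K\,\mathbb{E}\!\left[e^{-r\tau}\right],\qquad s_i(\tau):=\mathbb{E}\!\left[\int_0^{\tau}e^{-rt}Z(t)\,\mathrm{d}t\,\Big|\,X(0)=e_i\right],
\]
is an \emph{affine} function of $p$, and this is the engine for both assertions.

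The one quantitative step I would carry out first is a uniform bound on the slope $s_i(\tau)$. Plainly $s_i(\tau)\ge0$, and by Tonelli $s_i(\tau)\le\int_0^{\infty}e^{-rt}\,\mathbb{E}[Z(t)\mid X(0)=e_i]\,\mathrm{d}t$. The inner expectation is computed exactly as in the proof of Theorem~\ref{t:optimal1} --- condition on the trajectory of $X$, use that $\big(\exp(\int_0^t\sigma(X(s))\mathrm{d}B(s)-\frac{1}{2}\int_0^t\sigma(X(s))^2\mathrm{d}s)\big)_{t\ge0}$ is a martingale together with the independence of $X$ and $B$, and then apply Lemma~\ref{l:Expec} --- giving $\mathbb{E}[Z(t)\mid X(0)=e_i]=\frac{x_2-\mu_i}{x_2-x_1}e^{x_1t}+\frac{\mu_i-x_1}{x_2-x_1}e^{x_2t}$. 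Since $r>x_2>x_1$, the integral converges to a finite constant $L_i$, so $0\le s_i(\tau)\le L_i$ uniformly in $\tau$. This is precisely where $r>x_2$ enters --- and where Theorem~\ref{t:optimal1} found divergence for $r\le x_2$.

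Granting this, both conclusions follow with essentially no further work. First, $J_i=\sup_{\tau\in\mathcal{T}}J^{\tau}_i$ is finite-valued (the slope bound gives $J^{\tau}_i(p)\le pL_i+|K|$ since $C>0$), and as a pointwise supremum of affine functions it is \emph{convex} on $(0,+\infty)$; moreover
\[
|J_i(p)-J_i(p')|\le\sup_{\tau\in\mathcal{T}}|J^{\tau}_i(p)-J^{\tau}_i(p')|=|p-p'|\sup_{\tau\in\mathcal{T}}s_i(\tau)\le L_i|p-p'|,
\]
which is the asserted Lipschitz continuity. Second, the choice $\tau\equiv0$ gives $J^{0}_i\equiv -K$, hence $J_i\ge -K$, so that $\mathcal{S}_i=\{p>0:J_i(p)=-K\}=\{p>0:J_i(p)\le -K\}$ is a sublevel set of the convex function $J_i$, that is, a convex subset of $\mathbb{R}$; hence it is an interval and therefore connected. (Equivalently, from the identity $J^{\tau}_i(p)+K=\mathbb{E}[\int_0^{\tau}e^{-rt}(P(t)-C+rK)\,\mathrm{d}t]$ one reads off that $J^{\tau}_i(p)+K$ is nondecreasing in $p$, so $p\in\mathcal{S}_i$ and $0<p'\le p$ force $p'\in\mathcal{S}_i$; thus $\mathcal{S}_i$, when nonempty, has the form $(0,\sup\mathcal{S}_i]$, consistent with Theorem~\ref{t:OptiStraForm}.)

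The main obstacle is thus confined entirely to the slope estimate: one must check that $J^{\tau}_i(p)$ is well defined for every $\tau$ --- which holds because $\int_0^{\infty}e^{-rt}P(t)\,\mathrm{d}t$ is integrable while $C\int_0^{\tau}e^{-rt}\mathrm{d}t$ and $e^{-r\tau}K$ are bounded --- and that the interchanges of expectation and $\mathrm{d}t$-integral are legitimate, which they are since all integrands involved are nonnegative (Tonelli). The real content is the evaluation already performed for Theorem~\ref{t:optimal1}, and everything else is convexity bookkeeping.
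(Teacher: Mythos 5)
Your proposal is correct and follows essentially the same route as the paper: the Lipschitz bound comes from the factorization $P(t)=pZ(t)$, the martingale/independence computation and Lemma \ref{l:Expec} with $r>x_2$, and connectedness comes from convexity of $J_i$ (the paper phrases it as a three-point contradiction, you as a sublevel set of a convex function, which is the same idea). You merely make explicit two points the paper leaves implicit, namely that $J_i$ is convex as a supremum of functions affine in $p$ and that $J_i\geq -K$ via $\tau\equiv 0$.
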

\begin{proof}
1. By \eqref{e:price2} and (\ref{e:optimal1}), we have
\[
\begin{aligned}
&\quad\;|J_i(p_1)-J_i(p_2)|\\
&\leq|p_1-p_2|\int_0^{+\infty}\mathbb{E}\left[\exp(-rt)\left.\exp\left(\int_0^t\mu(X(s))-\frac{1}{2}\sigma(X(s))^2\mathrm{d}s\right.\right.\right.\\
&\qquad\qquad\qquad\qquad\qquad\qquad\qquad\qquad\left.\left.\left.+\int_0^t\sigma(X(s))\mathrm{d}B(s)\right)\right|X(0)=e_i\right]\mathrm{d}t.
\end{aligned}
\]

Note that
\[
\left(\exp\left(\int_0^t\sigma(X(s))\mathrm{d}B(s)-\frac{1}{2}\int_0^t\sigma(X(s))^2\mathrm{d}s)\right),t\geq 0\right)
\]
is a martingale. Then in light of the independence of $X$ and $B$, we get
\[
\begin{aligned}
&\quad\;|J_i(p_1)-J_i(p_2)|\\
&\leq|p_1-p_2|\int_0^{+\infty}\exp(-rt)\mathbb{E}\left[\left.\exp\left(\int_0^t\mu(X(s))\right)\mathrm{d}s\right|X(0)=e_i\right]\mathrm{d}t\\
&=\frac{1}{x_2-x_1}\cdot\left(\frac{x_2-\mu_i}{r-x_1}+\frac{\mu_i-x_1}{r-x_2}\right)|p_1-p_2|,
\end{aligned}
\]
where we have used Lemma \ref{l:Expec} and $r>x_2$ for the equality.

\noindent 2. We prove that $\mathcal{S}_i:=\{p:p\in(0,+\infty)\;\mathrm{and}\;J_i(p)=-K\}$ is connected by contradiction. Suppose that $a, b\in\mathcal{S}_i$ but there is a point $c\in (a, b)$ with $c\notin \mathcal{S}_i$ (i.e., $J_i(c)>-K$).

Noting that $J_i$ is convex, we have
\[
J_i(c)\leq \frac{b-c}{b-a}J_i(a)+\frac{c-a}{b-a}J_i(b)=-K,
\]
which contradicts $J_i(c)>-K$.
\end{proof}

\begin{thm}\label{t:ViscSpec}
Assume $r>x_2$. Then $\{J_1,J_2\}$ is the unique solution with at most linear growth of the following variational inequalities
\[
\left\{
        \begin{array}{ll}
\min\{(r+\lambda_1)J_1(p)-\mu_1pJ_1'(p)-\frac{1}{2}\sigma_1^2p^2J_1''(p)\\
\qquad\qquad\qquad-\lambda_1J_2(p)-p+C,J_1(p)+K\}=0\\
\min\{(r+\lambda_2)J_2(p)-\mu_2pJ_2'(p)-\frac{1}{2}\sigma_2^2p^2J_2''(p)\\
\qquad\qquad\qquad-\lambda_2J_1(p)-p+C,J_2(p)+K\}=0
\end{array}
\right.
\]
with boundary condition $J_i(0)=-\min\{C/r,K\}$, $i=1,2$, on $(0,+\infty)$ in viscosity sense .
\end{thm}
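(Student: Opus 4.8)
The plan is to recognise \eqref{e:optimal1} as the special case of the general problem \eqref{e:genopt} with $m=2$, state process $Y:=P$ living on $(0,+\infty)$, coefficients $\alpha_i(p):=\mu_ip$ and $\beta_i(p):=\sigma_ip$, running payoff $f_i(p):=p-C$, terminal payoff $g_i(p):=-K$, and $A$ the rate matrix of the application, so that $V_i=J_i$. One checks at once that $|\alpha_i(p)|+|\beta_i(p)|\le(|\mu_i|+\sigma_i)p$, so \eqref{i:H2} holds; and that, substituting $a_{11}=-\lambda_1$, $a_{21}=\lambda_1$, $a_{12}=\lambda_2$, $a_{22}=-\lambda_2$ and $\mathcal L_i\xi(p)=\tfrac12\sigma_i^2p^2\xi''(p)+\mu_ip\,\xi'(p)$, the variational inequality \eqref{e:VariIneq} becomes exactly the stated system. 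Since $r>x_2$, Lemma \ref{l:Conn} shows each $J_i$ is Lipschitz continuous, hence of at most linear growth.

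First I would establish the viscosity property by rerunning Steps 1--3 of the proof of Theorem \ref{t:VisSol} on the open half-line $(0,+\infty)$: the dynamic programming principle \eqref{e:DynProPri}, It\^o's formula and the continuity of $J_1,J_2$ give that $\{J_1,J_2\}$ is simultaneously a viscosity sub- and supersolution of the system there. The hypothesis ``$r$ large'' of Theorem \ref{t:VisSol} is not needed, since it was used only to obtain Lipschitz continuity through Lemma \ref{l:LipsCont}, which is here supplied by Lemma \ref{l:Conn}. Next, for the boundary condition I would let $p\to0^+$ in the viscosity inequalities, as in the proof of Lemma \ref{l:ViscZero}, obtaining that $\big(J_1(0^+),J_2(0^+)\big)$ solves $\min\{(r+\lambda_i)J_i(0)-\lambda_iJ_{3-i}(0)+C,\ J_i(0)+K\}=0$ for $i=1,2$; because the columns of $A$ sum to zero, $\mathbf 1^{\mathsf T}$ is a left eigenvector of $rI_{2}-A$ for the eigenvalue $r$, whence $\sum_qb_{qi}=1/r$ with $(b_{ij}):=(rI_{2}-A)^{-1}$, and therefore $J_i(0)=\max\{\sum_qb_{qi}f_q(0),g_i(0)\}=\max\{-C/r,-K\}=-\min\{C/r,K\}$, which is the claimed boundary condition.

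The bulk of the work, and the main obstacle, is uniqueness, for which Theorem \ref{t:Uniq1} cannot be quoted verbatim: its proof is carried out on $\mathbb R$, whereas here the state space is the half-line $(0,+\infty)$, with a finite boundary at $p=0$ at which both the drift and the diffusion coefficient vanish. I would adapt the comparison argument of Theorem \ref{t:Uniq1} to $(0,+\infty)$ as follows. Let $\{U_1,U_2\}$ be a viscosity solution of the system with at most linear growth that satisfies the boundary condition, and set $V_i^\varepsilon:=V_i+\varepsilon\psi_i$ with $\psi_i(p):=1+b_ip^{z_3}$, where $z_3>1$ and the positive constants $b_1,b_2$ are those produced by Lemma \ref{l:Soluz}(2). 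Using $b_iw_i(z_3)-\lambda_ib_{3-i}=0$ from that lemma, a short computation gives $(r+\lambda_i)\psi_i-\mathcal L_i\psi_i-\lambda_i\psi_{3-i}=r>0$, so $V_i^\varepsilon$ remains a viscosity supersolution of the system. Since $z_3>1$ and $U_i-V_i$ is at most linear, $U_i-V_i^\varepsilon\to-\infty$ as $p\to+\infty$, and since $U_i$ and $V_i$ have the same limit at $0$, $U_i-V_i^\varepsilon\to-\varepsilon<0$ as $p\to0^+$; hence, if $\sup_{p>0}\big(U_i(p)-V_i^\varepsilon(p)\big)>0$, it is attained on a compact subinterval $[\rho,R]\subset(0,+\infty)$, and the Crandall--Ishii doubling argument of Theorem \ref{t:Uniq1} applies on $[\rho,R]$ and yields a contradiction. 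Thus $U_i\le V_i^\varepsilon$; letting $\varepsilon\to0^+$ gives $U_i\le V_i$, and the symmetric argument gives $U_i\ge V_i$, so $U_i=V_i=J_i$. The delicate point throughout is the degenerate boundary $p=0$: it prevents a direct citation of Theorem \ref{t:Uniq1} and makes the boundary condition indispensable for confining the comparison to a compact interval, while the behaviour at $+\infty$ is handled, exactly as in Theorem \ref{t:Uniq1}, by the super-linear perturbation $\psi_i$ whose existence is the content of Lemma \ref{l:Soluz}(2).
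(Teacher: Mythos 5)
Your proposal is correct and follows essentially the paper's own route: the paper likewise gets the boundary value $J_i(0)=-\min\{C/r,K\}$ from Lemma \ref{l:ViscZero}, uses Lemma \ref{l:Conn} for Lipschitz continuity (hence linear growth and the viscosity property without the ``$r$ large'' hypothesis), and invokes Lemma \ref{l:Soluz}(2) exactly as the verification of \eqref{i:H1} (with $\overline{\lambda}=z_3$ and the constants $b_1,b_2$ there) needed to apply Theorem \ref{t:Uniq1}, declaring the result a straight corollary. Your explicit adaptation of the comparison argument to $(0,+\infty)$ with the perturbation $1+b_ip^{z_3}$, using the boundary condition to confine the maximum to a compact subinterval, is a careful unfolding of that citation, supplying detail about the degenerate boundary at $p=0$ that the paper's one-line proof leaves implicit.
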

\begin{proof}
It follows from Lemma \ref{l:ViscZero} that $J_i(0)=-\min\{C/r,K\}$, $i=1,2$. Then, by Lemma \ref{l:Soluz} and Lemma \ref{l:Conn}, the result is a straight corollary of Theorem \ref{t:Uniq1}.
\end{proof}

\begin{thm}\label{t:optimal2}
Assume that $r>x_2$ and $C\leq rK$. Then the optimal stopping time $\tau^*$ is given by $\tau^*=+\infty$ a.s.; furthermore, the functions $J_i$'s are
given by
\[
J_1(p)=k_2p-r^{-1}C,\;J_2(p)=k_1p-r^{-1}C,
\]
where
\[
k_i=\frac{r+\lambda_1+\lambda_2-\mu_i}{(r+\lambda_1-\mu_1)(r+\lambda_2-\mu_2)-\lambda_1\lambda_2},\;i=1,2.
\]
\end{thm}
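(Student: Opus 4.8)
\textbf{Proof proposal for Theorem \ref{t:optimal2}.}
The plan is to verify that the explicit affine functions $J_1(p)=k_2p-r^{-1}C$ and $J_2(p)=k_1p-r^{-1}C$ solve, in the classical (hence viscosity) sense, the system of variational inequalities of Theorem \ref{t:ViscSpec}, and then invoke uniqueness. First I would compute, for the candidate $\widetilde J_i(p):=k_{3-i}p-r^{-1}C$, the quantity $(r+\lambda_i)\widetilde J_i(p)-\mu_ip\widetilde J_i'(p)-\tfrac12\sigma_i^2p^2\widetilde J_i''(p)-\lambda_i\widetilde J_{3-i}(p)-p+C$; since the $\widetilde J_i$ are affine, the second-order term vanishes and this reduces to a linear expression in $p$ whose vanishing is precisely the pair of linear equations $(r+\lambda_1-\mu_1)k_2-\lambda_1k_1=1$ and $(r+\lambda_2-\mu_2)k_1-\lambda_2k_2=1$ (for the coefficient of $p$), together with the constant-term identity that is automatic from the $-r^{-1}C$ offset. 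Solving that $2\times2$ linear system by Cramer's rule gives exactly the stated $k_i=\dfrac{r+\lambda_1+\lambda_2-\mu_i}{(r+\lambda_1-\mu_1)(r+\lambda_2-\mu_2)-\lambda_1\lambda_2}$; here the hypothesis $r>x_2$ guarantees, via \eqref{e:MarkEqua1} and Step 2 of the proof of Lemma \ref{l:Soluz}, that the denominator $(r+\lambda_1-\mu_1)(r+\lambda_2-\mu_2)-\lambda_1\lambda_2$ is strictly positive, so the $k_i$ are well-defined and positive.

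Next I would check the second branch of each minimum, namely $\widetilde J_i(p)+K\ge 0$ for all $p\in(0,+\infty)$, and that the first branch is $\ge 0$ there (the first branch being $\equiv 0$ by the computation above, this is immediate). For $\widetilde J_i(p)+K=k_{3-i}p-r^{-1}C+K\ge 0$: at $p=0^+$ this is $K-r^{-1}C\ge 0$, which is exactly the assumption $C\le rK$; and since $k_{3-i}>0$ the expression is increasing in $p$, so it stays nonnegative on all of $(0,+\infty)$. Thus $\min\{\,\cdot\,,\widetilde J_i(p)+K\}=\min\{0,\text{nonnegative}\}=0$, so the variational inequality holds with equality, and moreover the boundary condition $\widetilde J_i(0)=-r^{-1}C=-\min\{C/r,K\}$ (using $C\le rK$) is satisfied. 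By Theorem \ref{t:ViscSpec} the solution with at most linear growth is unique, and the affine $\widetilde J_i$ clearly grow at most linearly, so $J_i=\widetilde J_i$, $i=1,2$.

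Finally, for the claim $\tau^*=+\infty$ a.s.: by Theorem \ref{t:OptiStraForm}'s underlying principle (or directly), $\tau^*=\inf\{t>0:(X(t),P(t))\in\bigcup_i\{e_i\}\times\mathcal{S}_i\}$, and $\mathcal{S}_i=\{p>0:J_i(p)=-K\}$. Since $J_i(p)+K=k_{3-i}p-r^{-1}C+K>0$ for every $p>0$ whenever $C<rK$ (and when $C=rK$ the only candidate is $p=0$, which $P$ never reaches as $P(t)>0$ for all $t$), the sets $\mathcal{S}_i$ are empty (resp. unreachable), so $\tau^*=+\infty$ a.s. The main obstacle, such as it is, is purely the linear algebra: one must correctly set up the $2\times2$ system for $(k_1,k_2)$ from matching the coefficient of $p$ and confirm that its solution simplifies to the stated closed form, while keeping track that the denominator is the same Markov-chain determinant shown positive under $r>x_2$; everything else is a direct verification.
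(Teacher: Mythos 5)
Your proof is correct, but it runs in the opposite direction from the paper's. The paper first kills the stopping region structurally: by Theorem \ref{t:OptiStra}(3), $\mathcal{S}_i\subset\mathcal{D}_i$, and with $g_i\equiv-K$, $f_i(p)=p-C$ one gets $\mathcal{D}_i=\{p:0<p\leq C-rK\}$, which is empty when $C\leq rK$; hence $\mathcal{S}_i=\emptyset$, $\tau^*=+\infty$, and the variational inequality degenerates to the pure linear ODE system on $(0,+\infty)$, whose at-most-linearly-growing solution (the homogeneous terms $p^{z_j}$ being excluded by the growth/Lipschitz and boundary constraints, via Theorem \ref{t:ViscSpec}) is the stated affine pair. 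You instead guess the affine candidates, verify they solve the full system of Theorem \ref{t:ViscSpec} classically (your $2\times2$ system $-\lambda_1k_1+(r+\lambda_1-\mu_1)k_2=1$, $(r+\lambda_2-\mu_2)k_1-\lambda_2k_2=1$ is the right one, the constant terms cancel as you say, and \eqref{e:MarkEqua1} gives positivity of the determinant), check the obstacle $J_i+K\geq0$ and the boundary value $-C/r=-\min\{C/r,K\}$ using $C\leq rK$, and then read off $\mathcal{S}_i=\emptyset$ from the explicit formula. This verification-plus-uniqueness route is self-contained and sidesteps both Theorem \ref{t:OptiStra} and any discussion of discarding the $p^{z_j}$ homogeneous solutions (uniqueness absorbs that), at the cost of not explaining where the formula comes from; the paper's route derives the formula but leans on the inclusion $\mathcal{S}_i\subset\mathcal{D}_i$. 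One small slip: the identification of $\tau^*$ as the hitting time of $\bigcup_i\{e_i\}\times\mathcal{S}_i$ comes from the unnumbered optimal-stopping-time theorem following Theorem \ref{t:Regu} (proved via \cite[pp.~101--102]{Pham2009}), not from Theorem \ref{t:OptiStraForm}; with that citation corrected, your argument, including the $C=rK$ edge case where $\mathcal{S}_i$ is still empty because $P(t)>0$, is complete.
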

\begin{proof}
Note that, for $i=1,2$,
\begin{equation*}
\mathcal{S}_i\subset\{p:0<p\leq C-rK\}
\end{equation*}
by Theorem \ref{t:OptiStra}. Thus we have $\mathcal{S}_i=\emptyset$.

Therefore, it follows from Theorem \ref{t:ViscSpec} that
\[
J_1(p)=\frac{(r+\lambda_1+\lambda_2-\mu_2)p}
{(r+\lambda_1-\mu_1)(r+\lambda_2-\mu_2)-\lambda_1\lambda_2}-\frac{C}{r}
\]
and
\[
J_2(p)=\frac{(r+\lambda_1+\lambda_2-\mu_1)p}
{(r+\lambda_1-\mu_1)(r+\lambda_2-\mu_2)-\lambda_1\lambda_2}-\frac{C}{r}.
\]
The proof is complete.
\end{proof}

\begin{cor}
If $r>\mu_2$ and $C\leq rK$, then the conclusions in Theorem \ref{t:optimal2} hold.
\end{cor}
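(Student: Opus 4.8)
The plan is to reduce this corollary to Theorem~\ref{t:optimal2}. Both statements already share the hypothesis $C\le rK$, so the only thing to check is that the strict inequality $r>\mu_2$ forces $r>x_2$, where $x_2$ is the larger root of the quadratic in Lemma~\ref{l:Expec}. Once this is done, the hypotheses of Theorem~\ref{t:optimal2} are met and its conclusions---namely $\tau^*=+\infty$ a.s.\ and the explicit affine formulas for $J_1,J_2$---carry over verbatim, exactly as in the proof of the earlier corollary that deduced Theorem~\ref{t:optimal1} from $r\le\mu_1$.

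The key inequality $x_2\le\mu_2$ is essentially recorded in the Remark following Lemma~\ref{l:Expec}: when $\mu_1<\mu_2$ one has $x_2<\mu_2$, and when $\mu_1=\mu_2$ one has $x_2=\mu_1=\mu_2$. Alternatively I would prove it directly. Writing $p(x):=x^2+(\lambda_1-\mu_1+\lambda_2-\mu_2)x+(\lambda_1-\mu_1)(\lambda_2-\mu_2)-\lambda_1\lambda_2$ for the quadratic of Lemma~\ref{l:Expec} (whose two roots are real and distinct, since its discriminant equals $(\lambda_1-\mu_1-\lambda_2+\mu_2)^2+4\lambda_1\lambda_2>0$), a short computation gives $p(\mu_2)=\lambda_2(\mu_2-\mu_1)\ge0$ thanks to $\mu_1\le\mu_2$ and $\lambda_2>0$, while $p'(\mu_2)=(\mu_2-\mu_1)+\lambda_1+\lambda_2>0$. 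Since $p$ is an upward parabola, the last inequality places $\mu_2$ to the right of its vertex, and combined with $p(\mu_2)\ge0$ this yields $\mu_2\ge x_2$.

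Consequently $r>\mu_2\ge x_2$, i.e.\ $r>x_2$, and together with $C\le rK$ we are precisely in the setting of Theorem~\ref{t:optimal2}, which gives the asserted conclusions. There is essentially no obstacle in this argument; the only point deserving a moment's care is the degenerate case $\mu_1=\mu_2$, in which $x_2=\mu_2$---but since the hypothesis $r>\mu_2$ is strict, $r>x_2$ still holds and the reduction goes through.
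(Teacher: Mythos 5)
Your proposal is correct and follows the paper's own route: the paper's proof is exactly the one-line observation that $r>\mu_2$ implies $r>x_2$, after which Theorem \ref{t:optimal2} applies. Your explicit verification that $\mu_2\geq x_2$ (via $p(\mu_2)=\lambda_2(\mu_2-\mu_1)\geq 0$ and $p'(\mu_2)>0$, using the standing assumption $\mu_1\leq\mu_2$) simply fills in the detail the paper leaves implicit.
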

\begin{proof}
The proof can be completed by the fact that $r>\mu_2$ implies $r>x_2$.
\end{proof}

Recall that $z_1$ and $z_2$ are the solutions of $w_1(z)w_2(z)-\lambda_1\lambda_2=0$ with $z_1<z_2<0$ (see Lemma \ref{l:Soluz}).
\begin{thm}\label{t:optimal3}
Assume that $r>x_2$ and $C>rK$. Then one and only one of the following holds.\\
$\mathrm{(1)}$ The equation
\[
\left[
\begin{array}{cc}
{p_1}^{-y_1}&0\\
0&{p_1}^{-y_2}
\end{array}
\right]
\left[
\begin{array}{c}
a_{11}p_1+b_{11}\\
a_{21}p_1+b_{21}
\end{array}
\right]=
\left[
\begin{array}{cc}
{p_2}^{-y_1}&0\\
0&{p_2}^{-y_2}
\end{array}
\right]
\left[
\begin{array}{c}
a_{12}p_2+b_{12}\\
a_{22}p_2+b_{22}
\end{array}
\right]
\]
has a solution $\{p_1^*,p_2^*\}$ with $p_1^*<p_2^*$,
where $y_1$ and $y_2$ are the two solutions of the following quadratic equation
\[
r+\lambda_1-\mu_1y-\frac{1}{2}\sigma_1^2y(y-1)=0
\]
with $y_1<y_2$,
\[
\left[
\begin{array}{c}
a_{11}\\
a_{21}
\end{array}
\right]=
\left[
\begin{array}{cc}
1&1\\
y_1&y_2
\end{array}
\right]^{-1}
\left[
\begin{array}{c}
\frac{-1}{r+\lambda_1-\mu_1}\\
\frac{-1}{r+\lambda_1-\mu_1}
\end{array}
\right],\;
\left[
\begin{array}{c}
b_{11}\\
b_{12}
\end{array}
\right]=
\left[
\begin{array}{cc}
1&1\\
y_1&y_2
\end{array}
\right]^{-1}
\left[
\begin{array}{c}
\frac{C-rK}{r+\lambda_1}\\
0
\end{array}
\right],
\]
\[
\begin{split}
\left[
\begin{array}{c}
a_{12}\\
a_{22}
\end{array}
\right]=&
\left[
\begin{array}{cc}
1&1\\
y_1&y_2
\end{array}
\right]^{-1}
\left(
\left[
\begin{array}{c}
k_2-\frac{1}{r+\lambda_1-\mu_1}\\
k_2-\frac{1}{r+\lambda_1-\mu_1}
\end{array}
\right]\right.\\
&-\lambda_1\left.
\left[
\begin{array}{cc}
1&1\\
z_1&z_2
\end{array}
\right]
\left[
\begin{array}{cc}
w_1(z_1)&w_1(z_2)\\
w_1(z_1)z_1&w_1(z_2)z_2
\end{array}
\right]^{-1}
\left[
\begin{array}{c}
k_1\\
k_1
\end{array}
\right]
\right),
\end{split}
\]
and
\[
\begin{split}
\left[
\begin{array}{c}
b_{12}\\
b_{22}
\end{array}
\right]=&
\left[
\begin{array}{cc}
1&1\\
y_1&y_2
\end{array}
\right]^{-1}
\left(
\left[
\begin{array}{c}
\frac{\lambda_1K+C}{r+\lambda_1}-\frac{C}{r}\\
0
\end{array}
\right]\right.\\
&+\lambda_1\left.
\left[
\begin{array}{cc}
1&1\\
z_1&z_2
\end{array}
\right]
\left[
\begin{array}{cc}
w_1(z_1)&w_1(z_2)\\
w_1(z_1)z_1&w_1(z_2)z_2
\end{array}
\right]^{-1}
\left[
\begin{array}{c}
\frac{C-rK}{r}\\
0
\end{array}
\right]
\right).
\end{split}
\]
In addition, the optimal stopping time $\tau^*$ is given by
\[
\tau^*=\inf\{t:t>0, (X(t),P(t))\in\{e_1\}\times(0,p_1^*]\cup\{e_2\}\times(0,p_2^*]\},
\]
and the $J_i$'s are given by
\[
J_1(p)=\left\{
\begin{array}{ll}
-K &\mathrm{on}\;(0,p_1^*]\\
\dfrac{p}{r+\lambda_1-\mu_1}-\dfrac{\lambda_1K+C}{r+\lambda_1}+A_1p^{y_1}+A_2p^{y_2} &\mathrm{on}\;(p_1^*,p_2^*]\\
k_2p-r^{-1}C+B_1p^{z_1}+B_2p^{z_2} &\mathrm{on}\;(p_2^*,+\infty)
\end{array}
\right.
\]
and
\[
J_2(p)=\left\{
\begin{array}{ll}
-K &\mathrm{on}\;(0,p_2^*]\\
k_1p-r^{-1}C+\lambda_1^{-1}w_1(z_1)B_1p^{z_1}+\lambda_1^{-1}w_1(z_2)B_2p^{z_2} &\mathrm{on}\;(p_2^*,+\infty),
\end{array}
\right.
\]
respectively. Here
\[
\left[
\begin{array}{c}
A_1\\
A_2
\end{array}
\right]=
\left[
\begin{array}{cc}
{p_1^*}^{-y_1}&0\\
0&{p_1^*}^{-y_2}
\end{array}
\right]
\left[
\begin{array}{c}
a_{11}p_1^*+b_{11}\\
a_{21}p_1^*+b_{21}
\end{array}
\right],
\]
\[
\left[
\begin{array}{c}
B_1\\
B_2
\end{array}
\right]=
\lambda_1\left[
\begin{array}{cc}
{p_2^*}^{-z_1}&0\\
0&{p_2^*}^{-z_2}
\end{array}
\right]
\left[
\begin{array}{cc}
w_1(z_1)&w_1(z_2)\\
w_1(z_1)z_1&w_1(z_2)z_2
\end{array}
\right]^{-1}
\left[
\begin{array}{c}
-k_1p_2^*+\frac{C-rK}{r}\\
-k_1p_2^*
\end{array}
\right].
\]
$\mathrm{(2)}$ The equation
\[
\left[
\begin{array}{cc}
{p_2}^{-\overline{y}_1}&0\\
0&{p_2}^{-\overline{y}_2}
\end{array}
\right]
\left[
\begin{array}{c}
\overline{a}_{11}p_2+\overline{b}_{11}\\
\overline{a}_{21}p_2+\overline{b}_{21}
\end{array}
\right]=
\left[
\begin{array}{cc}
{p_1}^{-\overline{y}_1}&0\\
0&{p_1}^{-\overline{y}_2}
\end{array}
\right]
\left[
\begin{array}{c}
\overline{a}_{12}p_1+\overline{b}_{12}\\
\overline{a}_{22}p_1+\overline{b}_{22}
\end{array}
\right]
\]
has a solution $\{\overline{p}_1^*,\overline{p}_2^*\}$ with $\overline{p}_1^*>\overline{p}_2^*$,
where $\overline{y}_1$ and $\overline{y}_2$ are the two solutions of the following quadratic equation
\[
r+\lambda_2-\mu_2y-\frac{1}{2}\sigma_2^2y(y-1)=0
\]
with $\overline{y}_1<\overline{y}_2$,
\[
\left[
\begin{array}{c}
\overline{a}_{11}\\
\overline{a}_{21}
\end{array}
\right]=
\left[
\begin{array}{cc}
1&1\\
\overline{y}_1&\overline{y}_2
\end{array}
\right]^{-1}
\left[
\begin{array}{c}
\frac{-1}{r+\lambda_2-\mu_2}\\
\frac{-1}{r+\lambda_2-\mu_2}
\end{array}
\right],\;
\left[
\begin{array}{c}
\overline{b}_{11}\\
\overline{b}_{12}
\end{array}
\right]=
\left[
\begin{array}{cc}
1&1\\
\overline{y}_1&\overline{y}_2
\end{array}
\right]^{-1}
\left[
\begin{array}{c}
\frac{C-rK}{r+\lambda_2}\\
0
\end{array}
\right],
\]
\[
\begin{split}
\left[
\begin{array}{c}
\overline{a}_{12}\\
\overline{a}_{22}
\end{array}
\right]=&
\left[
\begin{array}{cc}
1&1\\
\overline{y}_1&\overline{y}_2
\end{array}
\right]^{-1}
\left(
\left[
\begin{array}{c}
k_1-\frac{1}{r+\lambda_2-\mu_2}\\
k_1-\frac{1}{r+\lambda_2-\mu_2}
\end{array}
\right]\right.\\
&-\lambda_2\left.
\left[
\begin{array}{cc}
1&1\\
z_1&z_2
\end{array}
\right]
\left[
\begin{array}{cc}
w_2(z_1)&w_2(z_2)\\
w_2(z_1)z_1&w_2(z_2)z_2
\end{array}
\right]^{-1}
\left[
\begin{array}{c}
k_2\\
k_2
\end{array}
\right]
\right),
\end{split}
\]
and
\[
\begin{split}
\left[
\begin{array}{c}
\overline{b}_{12}\\
\overline{b}_{22}
\end{array}
\right]=&
\left[
\begin{array}{cc}
1&1\\
\overline{y}_1&\overline{y}_2
\end{array}
\right]^{-1}
\left(
\left[
\begin{array}{c}
\frac{\lambda_2K+C}{r+\lambda_2}-\frac{C}{r}\\
0
\end{array}
\right]\right.\\
&+\lambda_2\left.
\left[
\begin{array}{cc}
1&1\\
z_1&z_2
\end{array}
\right]
\left[
\begin{array}{cc}
w_2(z_1)&w_2(z_2)\\
w_2(z_1)z_1&w_2(z_2)z_2
\end{array}
\right]^{-1}
\left[
\begin{array}{c}
\frac{C-rK}{r}\\
0
\end{array}
\right]
\right).
\end{split}
\]
In addition, the optimal stopping time $\tau^*$ is given by
\[
\tau^*=\inf\{t:t>0, (X(t),P(t))\in\{e_1\}\times(0,\overline{p}_1^*]\cup\{e_2\}\times(0,\overline{p}_2^*]\},
\]
and the $J_i$'s are given by
\[
J_1(p)=\left\{
\begin{array}{ll}
-K &\mathrm{on}\;(0,\overline{p}_1^*]\\
k_2p-r^{-1}C+\lambda_2^{-1}w_2(z_1)\overline{B}_1p^{z_1}+\lambda_2^{-1}w_2(z_2)\overline{B}_2p^{z_2} &\mathrm{on}\;(\overline{p}_1^*,+\infty)
\end{array}
\right.
\]
and
\[
J_2(p)=\left\{
\begin{array}{ll}
-K &\mathrm{on}\;(0,\overline{p}_2^*]\\
\dfrac{p}{r+\lambda_2-\mu_2}-\dfrac{\lambda_2K+C}{r+\lambda_2}+\overline{A}_1p^{\overline{y}_1}+\overline{A}_2p^{\overline{y}_2}
&\mathrm{on}\;(\overline{p}_2^*,\overline{p}_1^*]\\
k_1p-r^{-1}C+\overline{B}_1p^{z_1}+\overline{B}_2p^{z_2} &\mathrm{on}\;(\overline{p}_1^*,+\infty)
\end{array}
\right.,
\]
respectively. Here
\[
\left[
\begin{array}{c}
\overline{A}_1\\
\overline{A}_2
\end{array}
\right]=
\left[
\begin{array}{cc}
{\overline{p}_2^*}^{-y_1}&0\\
0&{\overline{p}_2^*}^{-y_2}
\end{array}
\right]
\left[
\begin{array}{c}
\overline{a}_{11}\overline{p}_1^*+\overline{b}_{11}\\
\overline{a}_{21}\overline{p}_1^*+\overline{b}_{21}
\end{array}
\right],
\]
\[
\left[
\begin{array}{c}
\overline{B}_1\\
\overline{B}_2
\end{array}
\right]=
\lambda_2\left[
\begin{array}{cc}
{\overline{p}_1^*}^{-z_1}&0\\
0&{\overline{p}_1^*}^{-z_2}
\end{array}
\right]
\left[
\begin{array}{cc}
w_2(z_1)&w_2(z_2)\\
w_2(z_1)z_1&w_2(z_2)z_2
\end{array}
\right]^{-1}
\left[
\begin{array}{c}
-k_2\overline{p}_1^*+\frac{C-rK}{r}\\
-k_2\overline{p}_1^*
\end{array}
\right].
\]
$\mathrm{(3)}$ The equation
\[
\left[
\begin{array}{c}
\widetilde{a}_{11}p+\widetilde{b}_{11}\\
\widetilde{a}_{21}p+\widetilde{b}_{21}
\end{array}
\right]=
\left[
\begin{array}{c}
\widetilde{a}_{12}p+\widetilde{b}_{12}\\
\widetilde{a}_{22}p+\widetilde{b}_{22}
\end{array}
\right]
\]
has a positive solution $p^*$, where
\[
\left[
\begin{array}{c}
\widetilde{a}_{11}\\
\widetilde{a}_{12}
\end{array}
\right]=
\left[
\begin{array}{cc}
1&1\\
z_1&z_2
\end{array}
\right]^{-1}
\left[
\begin{array}{c}
-k_2\\
-k_2
\end{array}
\right],\;
\left[
\begin{array}{c}
\widetilde{b}_{11}\\
\widetilde{b}_{12}
\end{array}
\right]=
\left[
\begin{array}{cc}
1&1\\
z_1&z_2
\end{array}
\right]^{-1}
\left[
\begin{array}{c}
\frac{C-rK}{r}\\
0
\end{array}
\right],
\]
\[
\left[
\begin{array}{c}
\widetilde{a}_{12}\\
\widetilde{a}_{22}
\end{array}
\right]=
\lambda_1\left[
\begin{array}{cc}
w_1(z_1)&w_1(z_2)\\
w_1(z_1)z_1&w_1(z_2)z_2
\end{array}
\right]^{-1}
\left[
\begin{array}{c}
-k_1\\
-k_1
\end{array}
\right],
\]
and
\[
\left[
\begin{array}{c}
\widetilde{b}_{12}\\
\widetilde{b}_{22}
\end{array}
\right]=
\lambda_1\left[
\begin{array}{cc}
w_1(z_1)&w_1(z_2)\\
w_1(z_1)z_1&w_1(z_2)z_2
\end{array}
\right]^{-1}
\left[
\begin{array}{c}
\frac{C-rK}{r}\\
0
\end{array}
\right].
\]
In addition, the optimal stopping time $\tau^*$ is given by
\[\tau^*=\inf\{t:t>0, P(t)\in(0,p^*]\},
\]
and the $J_i$'s are given by
\[
J_1(p)=\left\{
\begin{array}{ll}
-K &\mathrm{on}\;(0,p^*]\\
k_2p-r^{-1}C+\widetilde{B}_1p^{z_1}+\widetilde{B}_2p^{z_2} &\mathrm{on}\;(p^*,+\infty)
\end{array}
\right.
\]
and
\[
J_2(p)=\left\{
\begin{array}{ll}
-K &\mathrm{on}\;(0,p^*]\\
k_1p-r^{-1}C+\lambda_1^{-1}w_1(z_1)\widetilde{B}_1p^{z_1}+\lambda_1^{-1}w_1(z_2)\widetilde{B}_2p^{z_2} &\mathrm{on}\;(p^*,+\infty),
\end{array}
\right.
\]
respectively. Here
\[
\left[
\begin{array}{c}
\widetilde{B}_1\\
\widetilde{B}_2
\end{array}
\right]=
\left[
\begin{array}{cc}
{p^*}^{-z_1}&0\\
0&{p^*}^{-z_2}
\end{array}
\right]
\left[
\begin{array}{c}
\widetilde{a}_{11}p^*+\widetilde{b}_{11}\\
\widetilde{a}_{21}p^*+\widetilde{b}_{21}
\end{array}
\right].
\]
\end{thm}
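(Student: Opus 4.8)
The plan is to turn the statement into an explicit free--boundary ODE system, solve it, and read off $\tau^*$. Since in this model $g_i\equiv-K$ and $f_i(p)=p-C$, the hypothesis $C>rK$ gives, via Lemma~\ref{l:ViscZero} (equivalently Theorem~\ref{t:ViscSpec}), the boundary value $J_i(0)=-\min\{C/r,K\}=-K=g_i(0)$, and, using $\sum_q a_{qi}=0$, $\mathcal D_i=\{p>0:(C-rK)-p\ge0\}=(0,C-rK]$. I would then check the hypotheses of Theorem~\ref{t:OptiStraForm}(2): \eqref{i:H1} holds with $\overline\lambda=z_3$ by Lemma~\ref{l:Soluz}(2), which supplies positive $b_1,b_2$ with $b_1w_1(z_3)=\lambda_1b_2$, $b_2w_2(z_3)=\lambda_2b_1$ and $z_3>1$; \eqref{i:H2} holds since $|\alpha_i(p)|+|\beta_i(p)|=(|\mu_i|+\sigma_i)|p|$ on $(0,\infty)$; the $\mathcal S_i$ are connected by Lemma~\ref{l:Conn}; and each $\mathcal S_i$ is nonempty, because otherwise $J_i$ would be $C^2$ on all of $(0,\infty)$ by Theorem~\ref{t:Regu}, and letting $p\downarrow0$ in the Cauchy--Euler equation it then solves (so that $pJ_i'(p)\to0$ and $p^2J_i''(p)\to0$, its solutions being combinations of $p^{\zeta}$ with $\zeta>0$ plus a Lipschitz particular part, and $J_{3-i}(0^+)=-K$) one would get $-rK+C=0$, contradicting $C>rK$. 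Hence Theorem~\ref{t:OptiStraForm}(2) yields $\mathcal S_i=(0,p_i^*]$ with $p_i^*:=\sup\mathcal S_i\in(0,C-rK]$.

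The three alternatives of the statement correspond exactly to $p_1^*<p_2^*$, $p_1^*>p_2^*$ and $p_1^*=p_2^*$, which are mutually exclusive and exhaustive --- this is where ``one and only one'' comes from --- and I would carry out Case~(1), getting (2) by interchanging the roles of the two states (whence the bars and the second block of constants) and (3) as the degenerate coincidence. In Case~(1): on $(0,p_1^*]$ both $J_i\equiv-K$; on $(p_1^*,p_2^*]$ one has $J_1>-K$, $J_2=-K$, so by Theorem~\ref{t:Regu} $J_1$ is $C^2$ there and solves, on the first branch of the variational inequality, the linear Cauchy--Euler equation $(r+\lambda_1)J_1-\mu_1pJ_1'-\tfrac12\sigma_1^2p^2J_1''=p-C-\lambda_1K$, whose general solution is $\tfrac{p}{r+\lambda_1-\mu_1}-\tfrac{\lambda_1K+C}{r+\lambda_1}+A_1p^{y_1}+A_2p^{y_2}$ with $y_1<y_2$ the roots of $r+\lambda_1-\mu_1y-\tfrac12\sigma_1^2y(y-1)=0$; on $(p_2^*,+\infty)$ both $J_i>-K$, so $(J_1,J_2)$ solves the coupled linear system, for which $(k_2p-r^{-1}C,\,k_1p-r^{-1}C)$ is a particular solution and the homogeneous modes are $p^{z_1},\dots,p^{z_4}$ subject to the eigenvector relation $w_1(z_j)c_1=\lambda_1c_2$, the at--most--linear--growth requirement (from Theorem~\ref{t:ViscSpec}) discarding $p^{z_3},p^{z_4}$ because $z_3>1$ (Lemma~\ref{l:Soluz}(2)). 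This is exactly the stated form of $J_1,J_2$ on $(p_2^*,+\infty)$.

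Finally I would pin down the six unknowns $p_1^*,p_2^*,A_1,A_2,B_1,B_2$ by the matching that Theorem~\ref{t:Regu} guarantees: $J_1,J_1'$ continuous across $p_1^*$ and across $p_2^*$, and $J_2,J_2'$ continuous across $p_2^*$, with values $-K$ and slopes $0$ inherited from the left (continuity of $J_1''$ across $p_2^*$ is automatic, since $J_2(p_2^*)=-K$ makes the two Cauchy--Euler right--hand sides coincide). The two conditions at $p_1^*$ give $(A_1,A_2)$ as the displayed affine--in--$p_1^*$ expression divided by $(p_1^*)^{y_j}$; the two $J_2$--conditions at $p_2^*$ give $(B_1,B_2)$ likewise; substituting both into the remaining two conditions (continuity of $J_1,J_1'$ at $p_2^*$) and performing the elimination collapses them verbatim to the displayed $2\times2$ vector equation in $(p_1^*,p_2^*)$, which therefore has a solution, namely the genuine free boundaries. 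In Case~(3) only the two outer pieces survive, the four matching conditions over--determine $(p^*,\widetilde B_1,\widetilde B_2)$, and the compatibility of the two ways of recovering $(\widetilde B_1,\widetilde B_2)$ from $p^*$ is precisely the displayed equation for $p^*$. Then $\tau^*$ in each case is the optimal stopping time $\inf\{t>0:(X(t),P(t))\in\bigcup_i\{e_i\}\times\mathcal S_i\}$ of the theorem preceding Theorem~\ref{t:OptiStra}, with $\mathcal S_i=(0,p_i^*]$ inserted, and uniqueness (Theorem~\ref{t:ViscSpec}) forces $J_i$ to be the closed forms just constructed. The main obstacle is organisational rather than conceptual: establishing $\mathcal S_i=(0,p_i^*]$ together with its nonemptiness, and then the linear--algebra bookkeeping that makes the smooth--fit and value--matching system reduce exactly to the matrix and scalar equations displayed, the over--determined Case~(3) producing the single compatibility equation.
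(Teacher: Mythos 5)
Your proposal is correct and follows essentially the same route as the paper: use Lemma \ref{l:ViscZero}, Theorem \ref{t:OptiStra}, Theorem \ref{t:OptiStraForm} and Lemma \ref{l:Conn} to get $\mathcal{S}_i=(0,p_i^*]$, split into the three mutually exclusive cases $p_1^*<p_2^*$, $p_1^*>p_2^*$, $p_1^*=p_2^*$, solve the Cauchy--Euler equations on each region (discarding the modes $p^{z_3},p^{z_4}$ by the linear-growth/Lipschitz bound), and impose value matching and $C^1$ smooth fit, whose elimination yields exactly the displayed matrix equations, with $\tau^*$ coming from the general optimal-stopping-time theorem and uniqueness from Theorem \ref{t:ViscSpec}. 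Your nonemptiness argument for $\mathcal{S}_i$ (a regularity/limit contradiction) is a slight variant of the paper's direct appeal to Lemma \ref{l:ViscZero} with $C>rK$, but this is a minor difference, not a different method.
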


\begin{proof}
1.  Since $C>rK$, it follows from Lemma \ref{l:ViscZero} that $J_i(0^+)=-K$ and $\mathcal{S}_i\neq\emptyset$. In addition, by Lemma \ref{t:OptiStra}, we
have $\mathcal{S}_i\subset(0,C-rK]$. Therefore, by Theorem \ref{t:OptiStraForm} and Lemma \ref{l:Conn}, it follows that $\mathcal{S}_i=(0,p_i^*]$ for some
positive number $p_i^*$.

\noindent 2. \texttt{Case 1} $p_1^*<p_2^*$. In this case, by Theorem \ref{t:ViscSpec}, we have
\begin{equation*}
\left\{
\begin{array}{ll}
J_1(p)=-K\\
J_2(p)=-K
\end{array}
\right.
\end{equation*}
on $(0,p_1^*]$,
\begin{equation}\label{e:J1J2p1p2}
\left\{
        \begin{array}{ll}
(r+\lambda_1)J_1(p)-\mu_1pJ_1'(p)-\frac{1}{2}\sigma_1^2p^2J_1''(p)-\lambda_1J_2(p)-p+C=0\\
J_2(p)=-K
\end{array}
\right.
\end{equation}
on $(p_1^*,p_2^*]$,
and
\begin{equation}\label{e:J1J2p2Infi}
\left\{
        \begin{array}{ll}
(r+\lambda_1)J_1(p)-\mu_1pJ_1'(p)-\frac{1}{2}\sigma_1^2p^2J_1''(p)-\lambda_1J_2(p)-p+C=0\\
(r+\lambda_2)J_2(p)-\mu_2pJ_2'(p)-\frac{1}{2}\sigma_2^2p^2J_2''(p)-\lambda_2J_1(p)-p+C=0
\end{array}
\right.
\end{equation}
on $(p_2^*,+\infty)$.

Note that $r+\lambda_1-\mu_1>0$ by Step 2 of the proof of Lemma \ref{l:Soluz}. We have by (\ref{e:J1J2p1p2})
\[
J_1(p)=\frac{p}{r+\lambda_1-\mu_1}-\frac{\lambda_1K+C}{r+\lambda_1}+A_1p^{y_1}+A_2p^{y_2}\;\,\mathrm{on}\;(p_1^*,p_2^*],
\]
where $A_1$ and $A_2$ are two constants, and $y_1$ and $y_2$ are the two solutions of the following quadratic equation
\[
r+\lambda_1-\mu_1y-\frac{1}{2}\sigma_1^2y(y-1)=0
\]
with $y_1<y_2$.

Since $J_1$ and $J_2$ are Lipschitz continuous, we have by (\ref{e:J1J2p2Infi})
\[
\left\{
\begin{array}{ll}
J_1(p)=\dfrac{(r+\lambda_1+\lambda_2-\mu_2)p}
{(r+\lambda_1-\mu_1)(r+\lambda_2-\mu_2)-\lambda_1\lambda_2}-\dfrac{C}{r}+B_1p^{z_1}+B_2p^{z_2}\\
J_2(p)=\dfrac{(r+\lambda_1+\lambda_2-\mu_1)p}
{(r+\lambda_1-\mu_1)(r+\lambda_2-\mu_2)-\lambda_1\lambda_2}-\dfrac{C}{r}+\lambda_1^{-1}w_1(z_1)B_1p^{z_1}\\
\qquad\qquad+\lambda_1^{-1}w_1(z_2)B_2p^{z_2}
\end{array}
\right.
\]
on $(p_2^*,+\infty)$, where $B_1$ and $B_2$ are some constants, and $z_1$ and $z_2$ are the solutions introduced in Lemma \ref{l:Soluz}.

Therefore, by $C^1$ continuity of $J_1$ and $J_2$, we have
\begin{equation}\label{e:Syst1}
\left\{
\begin{array}{ll}
\dfrac{p_1^*}{r+\lambda_1-\mu_1}-\dfrac{\lambda_1K+C}{r+\lambda_1}+A_1{p_1^*}^{y_1}+A_2{p_1^*}^{y_2}=-K\\
\dfrac{1}{r+\lambda_1-\mu_1}+A_1y_1{p_1^*}^{y_1-1}+A_2y_2{p_1^*}^{y_2-1}=0,
\end{array}
\right.
\end{equation}
\begin{equation}\label{e:Syst2}
\left\{
\begin{array}{llll}
\dfrac{p_2^*}{r+\lambda_1-\mu_1}-\dfrac{\lambda_1K+C}{r+\lambda_1}+A_1{p_2^*}^{y_1}+A_2{p_2^*}^{y_2}\\
\qquad=\dfrac{(r+\lambda_1+\lambda_2-\mu_2)p_2^*}{(r+\lambda_1-\mu_1)(r+\lambda_2-\mu_2)-\lambda_1\lambda_2}-\dfrac{C}{r}+B_1{p_2^*}^{z_1}+B_2{p_2^*}^{z_2}\\
\dfrac{1}{r+\lambda_1-\mu_1}+A_1y_1{p_2^*}^{y_1-1}+A_2y_2{p_2^*}^{y_2-1}\\
\qquad=\dfrac{r+\lambda_1+\lambda_2-\mu_2}{(r+\lambda_1-\mu_1)(r+\lambda_2-\mu_2)-\lambda_1\lambda_2}+B_1z_1{p_2^*}^{z_1-1}+B_2z_2{p_2^*}^{z_2-1},
\end{array}
\right.
\end{equation}
and
\begin{equation}\label{e:Syst3}
\left\{
\begin{array}{ll}
\dfrac{(r+\lambda_1+\lambda_2-\mu_1)p_2^*}{(r+\lambda_1-\mu_1)(r+\lambda_2-\mu_2)-\lambda_1\lambda_2}-\dfrac{C}{r}\\
\qquad+\lambda_1^{-1}w_1(z_1)B_1{p_2^*}^{z_1}+\lambda_1^{-1}w_1(z_2)B_2{p_2^*}^{z_2}=-K\\
\dfrac{r+\lambda_1+\lambda_2-\mu_1}{(r+\lambda_1-\mu_1)(r+\lambda_2-\mu_2)-\lambda_1\lambda_2}\\
\qquad+\lambda_1^{-1}w_1(z_1)B_1z_1{p_2^*}^{z_1-1}+\lambda_1^{-1}w_1(z_2)B_2z_2{p_2^*}^{z_2-1}=0.
\end{array}
\right.
\end{equation}

By solving $A_1$ and $A_2$ from (\ref{e:Syst1}) and solving $A_1$ and $A_2$ from (\ref{e:Syst2}) and (\ref{e:Syst3}), we get
\[
\left[
\begin{array}{cc}
{p_1^*}^{-y_1}&0\\
0&{p_1^*}^{-y_2}
\end{array}
\right]
\left[
\begin{array}{c}
a_{11}p_1^*+b_{11}\\
a_{21}p_1^*+b_{21}
\end{array}
\right]=
\left[
\begin{array}{cc}
{p_2^*}^{-y_1}&0\\
0&{p_2^*}^{-y_2}
\end{array}
\right]
\left[
\begin{array}{c}
a_{12}p_2^*+b_{12}\\
a_{22}p_2^*+b_{22}
\end{array}
\right].
\]

\noindent \texttt{Case 2} $p_1^*>p_2^*$. This case is similar to Case 1.

\noindent \texttt{Case 3} $p_1^*=p_2^*=:p^*$. By Theorem \ref{t:ViscSpec}, we have
\begin{equation*}
\left\{
\begin{array}{ll}
J_1(p)=-K\\
J_2(p)=-K
\end{array}
\right.
\end{equation*}
on $(0,p^*]$,
and
\begin{equation}\label{e:J1J2pInfi}
\left\{
        \begin{array}{ll}
(r+\lambda_1)J_1(p)-\mu_1pJ_1'(p)-\frac{1}{2}\sigma_1^2p^2J_1''(p)-\lambda_1J_2(p)-p+C=0\\
(r+\lambda_2)J_2(p)-\mu_2pJ_2'(p)-\frac{1}{2}\sigma_2^2p^2J_2''(p)-\lambda_2J_1(p)-p+C=0
\end{array}
\right.
\end{equation}
on $(p^*,+\infty)$.

Since $J_1$ and $J_2$ are Lipschitz continuous, we have by (\ref{e:J1J2pInfi})
\[
\left\{
\begin{array}{ll}
J_1(p)=\dfrac{(r+\lambda_1+\lambda_2-\mu_2)p}
{(r+\lambda_1-\mu_1)(r+\lambda_2-\mu_2)-\lambda_1\lambda_2}-\dfrac{C}{r}+\widetilde{B}_1p^{z_1}+\widetilde{B}_2p^{z_2}\\
J_2(p)=\dfrac{(r+\lambda_1+\lambda_2-\mu_1)p}
{(r+\lambda_1-\mu_1)(r+\lambda_2-\mu_2)-\lambda_1\lambda_2}-\dfrac{C}{r}\\
\qquad\qquad+\lambda_1^{-1}w_1(z_1)\widetilde{B}_1p^{z_1}+\lambda_1^{-1}w_1(z_2)\widetilde{B}_2p^{z_2}
\end{array}
\right.
\]
on $(p^*,+\infty)$, where $\widetilde{B}_1$ and $\widetilde{B}_2$ are some constants, and $z_1$ and $z_2$ are the solutions introduced in Lemma \ref{l:Soluz}.

Therefore, by $C^1$ continuity of $J_1$ and $J_2$, we have
\begin{equation}\label{e:SystEqua1}
\left\{
\begin{array}{ll}
\dfrac{(r+\lambda_1+\lambda_2-\mu_2)p^*}{(r+\lambda_1-\mu_1)(r+\lambda_2-\mu_2)-\lambda_1\lambda_2}
-\dfrac{C}{r}+\widetilde{B}_1{p^*}^{z_1}+\widetilde{B}_2{p^*}^{z_2}=-K\\
\dfrac{r+\lambda_1+\lambda_2-\mu_2}{(r+\lambda_1-\mu_1)(r+\lambda_2-\mu_2)-\lambda_1\lambda_2}+\widetilde{B}_1z_1{p^*}^{z_1-1}+\widetilde{B}_2z_2{p^*}^{z_2-1}=0,
\end{array}
\right.
\end{equation}
and
\begin{equation}\label{e:SystEqua2}
\left\{
\begin{array}{ll}
\dfrac{(r+\lambda_1+\lambda_2-\mu_1)p^*}{(r+\lambda_1-\mu_1)(r+\lambda_2-\mu_2)-\lambda_1\lambda_2}-\dfrac{C}{r}\\
\qquad\qquad+\lambda_1^{-1}w_1(z_1)\widetilde{B}_1{p^*}^{z_1}+\lambda_1^{-1}w_1(z_2)\widetilde{B}_2{p^*}^{z_2}=-K\\
\dfrac{r+\lambda_1+\lambda_2-\mu_1}{(r+\lambda_1-\mu_1)(r+\lambda_2-\mu_2)-\lambda_1\lambda_2}\\
\qquad\qquad+\lambda_1^{-1}w_1(z_1)\widetilde{B}_1z_1{p^*}^{z_1-1}+\lambda_1^{-1}w_1(z_2)\widetilde{B}_2z_2{p^*}^{z_2-1}=0.
\end{array}
\right.
\end{equation}

By solving $\widetilde{B}_1$ and $\widetilde{B}_2$ from (\ref{e:SystEqua1}) and solving $\widetilde{B}_1$ and $\widetilde{B}_2$ from (\ref{e:SystEqua2}), we get
\[
\left[
\begin{array}{c}
\widetilde{a}_{11}p^*+\widetilde{b}_{11}\\
\widetilde{a}_{21}p^*+\widetilde{b}_{21}
\end{array}
\right]=
\left[
\begin{array}{c}
\widetilde{a}_{12}p^*+\widetilde{b}_{12}\\
\widetilde{a}_{22}p^*+\widetilde{b}_{22}
\end{array}
\right].
\]
The proof is complete.
\end{proof}

\begin{cor}
If $r>\mu_2$ and $C>rK$, then the conclusions in Theorem \ref{t:optimal3} hold.
\end{cor}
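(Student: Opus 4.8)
The plan is to read off the geometry of the stopping regions from the results already established, then solve the resulting free-boundary system of linear ODEs explicitly in each of the three possible configurations, and finally invoke uniqueness to close the argument. First note that \eqref{i:H1} and \eqref{i:H2} are in force: \eqref{i:H2} is immediate since $P$ stays in $(0,+\infty)$ and $|\alpha_i(p)|+|\beta_i(p)|=(|\mu_i|+|\sigma_i|)|p|$, while \eqref{i:H1} is precisely the content of Lemma~\ref{l:Soluz}(2) (take $\overline\lambda=z_3>1$ with the positive weights $b_1,b_2$ produced there). Since $C>rK$, Lemma~\ref{l:ViscZero} gives $J_i(0^+)=-\min\{C/r,K\}=-K=g_i(0)$, so $\mathcal{S}_i\neq\emptyset$; computing the set $\mathcal{D}_i$ of Theorem~\ref{t:OptiStra} with $g_i\equiv-K$ and $f_i(p)=p-C$ yields $\mathcal{D}_i=(0,C-rK]$, hence $\mathcal{S}_i\subset(0,C-rK]$. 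As $g_i\equiv-K$ is $C^2$, $\mathcal{S}_i$ is connected by Lemma~\ref{l:Conn}, and $g_i(0)=-K>-C/r=\sum_q b_{qi}f_q(0)$ (using $\sum_q b_{qi}=1/r$), Theorem~\ref{t:OptiStraForm}(2) applies and gives $\mathcal{S}_i=(0,p_i^*]$ for some $p_i^*\in(0,C-rK]$, $i=1,2$. Exactly one of $p_1^*<p_2^*$, $p_1^*=p_2^*$, $p_1^*>p_2^*$ holds; these are the three alternatives, and their mutual exclusivity (as well as solvability of the associated consistency equations) will follow from uniqueness of $J_1,J_2$.

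Next I would compute $J_1,J_2$ interval by interval. On $\mathcal{S}_i=(0,p_i^*]$ we have $J_i=g_i=-K$; on $\mathcal{C}_i=(p_i^*,+\infty)$, Theorem~\ref{t:Regu} gives $J_i\in C^2$, so the viscosity equation of Theorem~\ref{t:ViscSpec} becomes the classical equation
\[
(r+\lambda_i)J_i-\tfrac12\sigma_i^2p^2J_i''-\mu_ipJ_i'-\lambda_iJ_{3-i}-(p-C)=0 .
\]
Treat $p_1^*<p_2^*$ first, the case $p_1^*>p_2^*$ being identical with the two states interchanged. On $(p_1^*,p_2^*]$ we still have $J_2\equiv-K$, so $J_1$ solves a single inhomogeneous Euler equation with solution $\tfrac{p}{r+\lambda_1-\mu_1}-\tfrac{\lambda_1K+C}{r+\lambda_1}+A_1p^{y_1}+A_2p^{y_2}$, where $r+\lambda_1-\mu_1>0$ (cf.\ the proof of Lemma~\ref{l:Soluz}) and $y_1<y_2$ solve $r+\lambda_1-\mu_1y-\tfrac12\sigma_1^2y(y-1)=0$. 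On $(p_2^*,+\infty)$ the two equations couple; inserting $J_i=\beta_ip^z$ into the homogeneous part shows the admissible exponents are the roots of $w_1(z)w_2(z)-\lambda_1\lambda_2=0$ from Lemma~\ref{l:Soluz}, with eigenvector relation $\beta_2=\lambda_1^{-1}w_1(z)\beta_1$, and since $z_3,z_4>1$ by Lemma~\ref{l:Soluz}(2) the at-most-linear-growth requirement forces the $p^{z_3},p^{z_4}$ components to vanish. The affine particular solution is $k_2p-C/r$ for $J_1$ and $k_1p-C/r$ for $J_2$ (checked via $(r+\lambda_1-\mu_1)k_2-\lambda_1k_1=1$ and its symmetric analogue), so $J_1=k_2p-C/r+B_1p^{z_1}+B_2p^{z_2}$ and $J_2=k_1p-C/r+\lambda_1^{-1}w_1(z_1)B_1p^{z_1}+\lambda_1^{-1}w_1(z_2)B_2p^{z_2}$.

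It then remains to pin down $A_1,A_2,B_1,B_2,p_1^*,p_2^*$ from the pasting conditions furnished by Theorem~\ref{t:Regu}: at $p_1^*$, value matching $J_1(p_1^*)=-K$ and smooth pasting $J_1'(p_1^*)=0$; at $p_2^*$, value and first-derivative matching of $J_1$ across the two branches of $\mathcal{C}_1$; and at $p_2^*$, value matching $J_2(p_2^*)=-K$ and smooth pasting $J_2'(p_2^*)=0$. No independent second-derivative condition for $J_1$ at $p_2^*$ need be imposed: once $J_1,J_1'$ and $J_2$ are continuous there, $J_1''$ is recovered from the ODE and is automatically continuous, so this is exactly six equations in six unknowns. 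Solving the two equations at $p_1^*$ for $(A_1,A_2)$, the two $J_2$-equations at $p_2^*$ for $(B_1,B_2)$, substituting the latter into the two $J_1$-equations at $p_2^*$ and solving again for $(A_1,A_2)$, and equating the two expressions, produces the stated matrix identity in $(p_1^*,p_2^*)$; its coefficients $a_{jk},b_{jk}$ are the closed forms obtained from inverting the Vandermonde-type $2\times2$ matrices built from $y_1,y_2$, from $z_1,z_2$, and from the vectors $(1,z_\ell)$ scaled by $w_1(z_\ell)$. The degenerate case $p_1^*=p_2^*=:p^*$ is the same computation with the middle band removed: $J_i$ on $(p^*,+\infty)$ has the coupled form above, and value plus smooth pasting of both $J_1$ and $J_2$ at $p^*$, after eliminating the two coefficients $\widetilde B_1,\widetilde B_2$ from each pair, yield the scalar consistency equation for $p^*$.

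Finally, Theorem~\ref{t:ViscSpec} (which rests on Theorem~\ref{t:Uniq1}) asserts that $J_1,J_2$ are the unique at-most-linearly-growing viscosity solution of the variational system, so the functions constructed above must equal $J_1,J_2$; in particular $(p_1^*,p_2^*)$ is uniquely determined, whence precisely one of the three alternatives occurs and its consistency equation is solvable. The optimal stopping time is then the one supplied by the theorem immediately preceding Theorem~\ref{t:OptiStra}, namely $\tau^*=\inf\{t>0:(X(t),P(t))\in\{e_1\}\times(0,p_1^*]\cup\{e_2\}\times(0,p_2^*]\}$, which collapses to $\inf\{t>0:P(t)\in(0,p^*]\}$ in the degenerate case. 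The one genuine difficulty I anticipate is the linear-algebra bookkeeping for the coupled ODE on $(p_2^*,+\infty)$ and keeping the count of pasting conditions exact, so that the free-boundary system is neither over- nor under-determined; all the analytic substance — the viscosity property, the regularity, the connectedness of $\mathcal{S}_i$, and the uniqueness — is already available from the earlier results.
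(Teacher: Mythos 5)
The single step this corollary actually requires is missing from your argument. Theorem \ref{t:optimal3}, as well as every auxiliary result you invoke --- Lemma \ref{l:Soluz}(2) (which supplies $z_3>1$ and the positive weights $b_1,b_2$ you use to verify \eqref{i:H1}), Lemma \ref{l:Conn} (Lipschitz continuity and connectedness of $\mathcal{S}_i$), and Theorem \ref{t:ViscSpec} (the uniqueness characterization of $\{J_1,J_2\}$) --- is stated under the hypothesis $r>x_2$, not $r>\mu_2$. You apply all of them without ever verifying $r>x_2$, so their hypotheses remain unchecked; yet this implication is precisely the whole content of the corollary and of the paper's proof: by the remark following Lemma \ref{l:Expec} together with the standing assumption $\mu_1\leq\mu_2$, one has either $x_1<\mu_1<x_2<\mu_2$ or $x_2=\mu_1=\mu_2$, hence $x_2\leq\mu_2<r$, and the conclusion follows at once by citing Theorem \ref{t:optimal3}.

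Beyond this omission, the bulk of your write-up is a re-derivation of Theorem \ref{t:optimal3} itself (checking \eqref{i:H1}--\eqref{i:H2}, identifying $\mathcal{D}_i=(0,C-rK]$ and $\mathcal{S}_i=(0,p_i^*]$, solving the Euler equations regime by regime, and counting the pasting conditions). Those computations are consistent with the paper's proof of that theorem, but they are not needed for the corollary, which reduces to the one-line observation above; and they cannot substitute for it, since without $r>x_2$ the free-boundary analysis you sketch has no licence to use Lemma \ref{l:Soluz}, Lemma \ref{l:Conn} or Theorem \ref{t:ViscSpec} in the first place.
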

\begin{proof}
The proof can be completed by the fact that $r>\mu_2$ implies $r>x_2$.
\end{proof}

\begin{exm}
Take $\mu_1=0.01$, $\mu_2=0.10$, $\sigma_1=\sigma_2=0.25$, $r=0.08$, $\lambda_1=\lambda_2=0.05$, $C=20$, $K=5$. Then we have $r=0.08>0.0723=x_2$ and $C=20>0.40=rK$. Thus we apply Theorem \ref{t:optimal3}, and find that (2) of Theorem \ref{t:optimal3} gives us $(\overline{p}_1^*,\overline{p}_2^*)=(2.08,1.04)$ and the optimal stopping time $\tau^*=\inf\{t:t>0, (X(t),P(t))\in\{e_1\}\times(0,2.08]\cup\{e_2\}\times(0,1.04]\}$.

It is interesting to compare regime-switching cases with no regime-switching cases. If there is no regime switching and the price $P$ satisfies
\[
\mathrm{d}P(t)=0.01P(t)\mathrm{d}t+0.25P(t)\mathrm{d}B(t),
\]
the optimal stopping time is $\inf\{t:t>0, P(t)\in(0,12.73]\}$. However, if the price $P$ satisfies
\[
\mathrm{d}P(t)=0.10P(t)\mathrm{d}t+0.25P(t)\mathrm{d}B(t),
\]
the firm should never stop the extraction since $r=0.08<0.10$.

In summary, the firm may stop the extraction even though it should never stop the extraction in one of regimes.
\end{exm}

\section{Conclusions}\label{Conclu}

Regime-switching processes are introduced to describe the price of financial assets and commodities \cite[e.g.,][]{CherGallGhysTauc2003, CasaDufrRout2005}, the stochastic behavior of temperature \cite{ElWaFa2014}, and so on. Under the assumption that underlying processes are modeled by some regime-switching processes, Guo and Zhang \cite{GuoZhang2004} derive an explicit closed solution for perpetual American options, Bae \textit{et al.}~\cite{BaKiMu2014} investigate dynamic asset allocation among diverse financial markets, and Elias \textit{et al.}~\cite{ElWaFa2014} valuate temperature-based weather options. These motivate us to study in a uniform way the optimal stopping problems in which underlying processes and payoff functions are modulated by Markov chains.

In this paper, we employ the viscosity solution technique to analyze optimal stopping problems with regime switching. Specifically, we first prove the value function is a viscosity solution of some variational inequalities; next, we obtain the uniqueness of the viscosity solution of the variational inequalities; then, we study the regularity of the value function and the form of optimal stopping intervals.

In Section \ref{model}, we provide an application of the results obtained in Section \ref{optstop}. At the end of the paper, a numerical example is demonstrated. From the example, we come to a conclusion that a firm may stop a project even though it should never stop the project in one of regimes.

\bigskip
\bigskip
\bigskip

\bibliographystyle{mybst2}
\bibliography{mybibfile}

\end{document}